\newtheorem{thm}{Theorem}[section]
\newtheorem{lemma}[thm]{Lemma}
\newtheorem{proposition}[thm]{Proposition}
\newtheorem{definition}[thm]{Definition}
\newtheorem{corollary}[thm]{Corollary}
\newcommand{\p}{\mathbb{P}}
\newcommand{\q}{\mathbb{Q}}
\newcommand{\dom}{\mathrm{dom}}
\newcommand{\add}{\textsc{Add}}
\begin{document}

\title{Quotients of strongly proper forcings and guessing models}

\author{Sean Cox and John Krueger}

\address{Sean Cox \\ Department of Mathematics and Applied Mathematics \\
Virginia Commonwealth University \\ 
1015 Floyd Avenue \\ 
PO Box 842014 \\ 
Richmond, Virginia 23284}
\email{scox9@vcu.edu}

\address{John Krueger \\ Department of Mathematics \\ 
University of North Texas \\
1155 Union Circle \#311430 \\
Denton, TX 76203}
\email{jkrueger@unt.edu}

\date{Submitted June 2014; revised June 2015}

\thanks{2010 \emph{Mathematics Subject Classification:} 
Primary 03E40; Secondary 03E35.}

\thanks{\emph{Key words and phrases.} strongly proper, 
approximation property, adequate 
set forcing, guessing model.}

\begin{abstract}
We prove that a wide class of strongly proper forcing posets have 
quotients with strong properties. 
Specifically, we prove that quotients of forcing posets which have universal 
strongly generic conditions on a stationary set of models by certain nice 
regular suborders satisfy the 
$\omega_1$-approximation property. 
We prove that the existence of stationarily many 
$\omega_1$-guessing models 
in $P_{\omega_2}(H(\theta))$, for sufficiently large cardinals 
$\theta$, is consistent with the continuum being arbitrarily large, solving a problem 
of Viale and Weiss \cite{weiss1}.
\end{abstract}

\maketitle

Many consistency results in set theory involve factoring a forcing poset $\q$ 
over a regular suborder $\p$ in a forcing extension by $\p$, 
and applying properties of the quotient forcing $\q / \dot G_\p$. 
We will be interested in the situation where $\q$ has strongly generic 
conditions for elementary substructures, 
and we wish the quotient $\q / \dot G_\p$ to have similar properties. 
For example, the quotient $\q / \dot G_\p$ having the 
approximation property is useful for constructing 
models in which there is a failure of square principles 
or related properties.

We introduce some variations of strongly generic conditions, including 
simple and universal conditions. 
Our main theorem regarding quotients is that if $\q$ is a forcing poset 
with greatest lower bounds 
for which there are stationarily many 
countable elementary substructures which have universal strongly generic 
conditions, and $\p$ is a regular suborder of $\q$ which relates in a 
nice way to $\q$, 
then $\p$ forces that $\q / \dot G_\p$ has the $\omega_1$-approximation property. 
Several variations of this theorem are given, as well as an example 
which shows that not all quotients of strongly proper forcings are well behaved. 

Previously Weiss introduced combinatorial principles which characterize 
supercompactness yet also make sense for successor cardinals 
(\cite{weiss1}, \cite{weiss2}). 
Of particular interest to us is the principle $\mathsf{ISP}(\omega_2)$, 
which asserts the existence of stationarily many $\omega_1$-guessing 
models in $P_{\omega_1}(H(\theta))$, for sufficiently large 
regular cardinals $\theta$. 
This principle follows from PFA and has some of same consequences, 
such as the failure of the approachability property on $\omega_1$. 
It follows that $\mathsf{ISP}(\omega_2)$ 
implies that $2^\omega \ge \omega_2$. 

Viale and Weiss \cite{weiss1} asked whether this principle settles the value of 
the continuum. 
We solve this problem by showing that $\mathsf{ISP}(\omega_2)$ 
is consistent with $2^\omega$ being arbitrarily large. 
The solution is an application of the quotient theorem described above 
and the second author's method of adequate set forcing (\cite{jk21}).

\bigskip

Section 1 provides background on regular suborders and quotients, 
as well as guessing models and $\mathsf{ISP}$. 
Section 2 introduces simple universal strongly generic conditions 
and proves the main result on quotients. 
Section 3 discusses products of strongly proper forcings. 
Section 4 provides two variations of the main quotient theorem. 
Section 5 gives an example showing that not all quotients of strongly 
proper forcings are well behaved. 
Section 6 describes a strongly proper collapse using the method of 
adequate set forcing. 
Section 7 constructs a model in which $\mathsf{ISP}(\omega_2)$ holds and 
$2^\omega$ is arbitrarily large.

\section{Background}

In this section we provide the background necessary for reading the paper. 
We assume that the reader is already familiar with the basics of proper forcing and 
generalized stationarity. 
First, we will review some well-known results about 
regular suborders and quotients; since these ideas 
are central to the paper we provide a thorough treatment. 
Secondly, we review the idea of a guessing model, the approximation property, 
and the principle $\mathsf{ISP}(\omega_2)$.

For the remainder of the section fix a forcing poset $\q$ which has 
greatest lower bounds. 
In other words, for all compatible conditions $p$ and $q$ in $\q$, 
a greatest lower bound $p \wedge q$ exists. 
A \emph{suborder} of $\q$ is a set $\p \subseteq \q$ 
ordered by $\le_\p := \le_\q \cap (\p \times \p)$. 
A suborder $\p$ of $\q$ is said to be a \emph{regular suborder} if 
(a) for all $p$ and $q$ in $\p$, if $p$ and $q$ are compatible in $\q$ 
then $p$ and $q$ are compatible in $\p$, and (b) 
if $A$ is a maximal antichain of $\p$, then $A$ is a maximal 
antichain of $\q$.

\begin{lemma}
Let $\p$ be a regular suborder of $\q$. 
Then for all $q \in \q$, there is $s \in \p$ such that for all 
$t \le s$ in $\p$, $q$ and $t$ are compatible in $\q$.
\end{lemma}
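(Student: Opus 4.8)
The plan is to argue by contradiction. Suppose that some $q \in \q$ has no such $s$. Unpacking this, for every $s \in \p$ there is some $t \le_\p s$ which is incompatible with $q$ in $\q$; that is, the set
\[
D := \{\, t \in \p : t \text{ and } q \text{ are incompatible in } \q \,\}
\]
is dense in $\p$.

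Next I would extract from $D$ a maximal antichain of $\p$. Using Zorn's lemma, choose an antichain $A \subseteq D$ which is maximal among antichains contained in $D$. Since $D$ is dense in $\p$, it follows by a routine argument that $A$ is actually a maximal antichain of $\p$: given $p \in \p$, pick $t \le_\p p$ with $t \in D$; by maximality of $A$ within $D$, $t$ is compatible in $\p$ with some $a \in A$, and hence so is $p$.

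Now I would invoke clause (b) in the definition of regular suborder: since $A$ is a maximal antichain of $\p$, it is a maximal antichain of $\q$. But every element of $A$ lies in $D$, hence is incompatible with $q$ in $\q$, so $q$ is incompatible with every member of $A$. This contradicts the fact that $A$ is a maximal antichain of $\q$, and completes the proof. (Note that clause (a) is not needed for this argument.)

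I do not anticipate any serious obstacle. The only two points that require a little care are the correct negation of the conclusion — which is precisely what makes $D$ dense — and the standard fact that a maximal antichain of a dense subset of $\p$ is a maximal antichain of $\p$ itself.
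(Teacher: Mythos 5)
Your proof is correct and follows exactly the same route as the paper: negate the conclusion to see that $D$ is dense in $\p$, take a maximal antichain $A\subseteq D$, apply clause (b) of the definition of regular suborder, and derive a contradiction with $q$ being compatible with some member of $A$. The only difference is that you spell out the (standard) step that a maximal antichain within a dense set is maximal in the whole poset, which the paper leaves implicit.
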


\begin{proof}
Suppose for a contradiction that there is $q$ in $\q$ such that for all 
$s \in \p$, there is $t \le s$ in $\p$ 
such that $t$ is incompatible with $q$ in $\q$. 
Let $D$ be the set of $t \in \p$ such that $t$ 
is incompatible with $q$ in $\q$. 
Then $D$ is dense in $\p$. 
Let $A$ be a maximal antichain of $\p$ contained in $D$. 
Since $\p$ is a regular suborder, $A$ is maximal in $\q$. 
Therefore $q$ is compatible with some member of $A$, 
which contradicts the definition of $D$.
\end{proof}

\begin{definition}
Let $\p$ be a regular suborder of $\q$. 
Then $\q / \dot G_\p$ is a $\p$-name for the poset consisting of conditions 
$q \in \q$ such that for all $s \in \dot G_\p$, $q$ and $s$ are compatible in $\q$, 
with the same ordering as $\q$.
\end{definition}

Note that if $p * \check q$ is in $\p * (\q / \dot G_\p)$, then $p$ and $q$ 
are compatible, so $p \wedge q$ exists.

\begin{lemma}
Let $\p$ be a regular suborder of $\q$. 
Let $q \in \q$ and $s \in \p$. 
Then the following are equivalent:
\begin{enumerate}
\item $s \Vdash_\p q \in \q / \dot G_\p$;
\item for all $t \le s$ in $\p$, $q$ and $t$ are compatible in $\q$.
\end{enumerate}
\end{lemma}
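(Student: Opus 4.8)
The plan is to prove both implications by a direct generic-filter argument, using that $\q$ and its ordering lie in $V$, so that compatibility (and incompatibility) of two conditions of $\q$ is absolute between $V$ and any forcing extension, and using that any condition lying in a $\p$-generic filter $G$ is an element of $\dot G_\p^G$. Both directions then come down to elementary bookkeeping with common lower bounds, so I expect no serious obstacle.

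First I would prove $(2) \Rightarrow (1)$. Assuming $(2)$, fix a $\p$-generic filter $G$ with $s \in G$; it suffices to show that $q$ is compatible in $\q$ with every member of $\dot G_\p^G = G$. Given $s' \in G$, use that $G$ is a filter to find a common extension $t \in G$ of $s$ and $s'$, so in particular $t \le s$ in $\p$ and $t \le s'$. By $(2)$, $q$ and $t$ are compatible in $\q$, so there is $r$ with $r \le q$ and $r \le t$; since $t \le s'$, also $r \le s'$, and hence $r$ witnesses that $q$ and $s'$ are compatible in $\q$. As $s'$ was arbitrary, $s \Vdash_\p q \in \q / \dot G_\p$.

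For $(1) \Rightarrow (2)$, assume $s \Vdash_\p q \in \q / \dot G_\p$ and let $t \le s$ in $\p$. Fix a $\p$-generic filter $G$ with $t \in G$; then $s \in G$ since $G$ is upward closed, so $q \in (\q / \dot G_\p)^G$, and therefore $q$ is compatible in $\q$ with $t$, since $t \in \dot G_\p^G$. This compatibility is witnessed in $V[G]$, but by absoluteness it holds in $V$, which gives $(2)$. The only point in the whole argument that needs any attention is this last appeal to absoluteness; otherwise it is routine, and it uses nothing about $\q$ beyond the standing assumptions.
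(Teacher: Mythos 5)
Your proof is correct, and it is the expected routine argument; the paper itself offers no proof (it only remarks that "the proof is straightforward"), so there is nothing to compare against. Both implications are handled correctly via the forcing theorem, and your appeal to absoluteness is sound: compatibility of $q$ and $t$ in $\q$ is a statement about the existence of a common lower bound in the set $\q$ under the relation $\le_\q$, both of which are fixed sets in $V$, so it is absolute between $V$ and $V[G]$ in both directions.
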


The proof is straightforward. 

\begin{lemma}
Let $\p$ be a regular suborder of $\q$. 
Then $\p$ forces that whenever $q \in \q / \dot G_\p$ and $q \le p$, 
then $p \in \q / \dot G_\p$.
\end{lemma}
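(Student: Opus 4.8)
The plan is to argue directly in a generic extension. Let $G$ be a $\p$-generic filter over $V$ and work in $V[G]$. Since $\q$, and hence the relation $\le_\q$, lies in $V$, it suffices to fix arbitrary $p, q \in \q$ with $q \in \q / G$ and $q \le_\q p$ and to show $p \in \q / G$. Unwinding the definition of $\q / \dot G_\p$, the hypothesis $q \in \q / G$ says exactly that $q$ is compatible in $\q$ with every element of $G$, and the desired conclusion says the same for $p$.

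So fix $s \in G$. By the hypothesis on $q$ there is $r \in \q$ with $r \le_\q q$ and $r \le_\q s$. Since $q \le_\q p$, transitivity of $\le_\q$ gives $r \le_\q p$, so $r$ witnesses that $p$ and $s$ are compatible in $\q$. As $s \in G$ was arbitrary, $p$ is compatible with every element of $G$, i.e. $p \in \q / G$. Since $G$ was an arbitrary $\p$-generic, $\p$ forces the statement.

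Alternatively, one can avoid passing to an extension by invoking the forcing-relation characterization of membership in the quotient from the preceding lemma: if $s \in \p$ and $s \Vdash_\p q \in \q / \dot G_\p$ with $q \le_\q p$, then for every $t \le_\p s$ the lemma produces $r \le_\q q, t$, whence $r \le_\q p$ by transitivity, so $p$ and $t$ are compatible in $\q$; the lemma then yields $s \Vdash_\p p \in \q / \dot G_\p$.

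I do not anticipate any genuine obstacle: the content is entirely the transitivity of $\le_\q$ together with unwinding the definition of $\q / \dot G_\p$, and the statement — like the two lemmas preceding it — is bookkeeping about the quotient poset. The only point requiring a moment's care is to observe that $q \le_\q p$ and compatibility of $q$ with $s$ together furnish a \emph{single} common extension $r$ that also lies below $p$, rather than merely asserting abstractly that $p$ is compatible with $s$.
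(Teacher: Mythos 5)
Your proof is correct; the paper simply states ``The proof is easy'' and omits the argument, and yours is the standard one it has in mind (unwind the definition of $\q/\dot G_\p$ and use transitivity of $\le_\q$ to pass a common lower bound of $q$ and $s$ up to $p$). The alternative you sketch via the forcing-relation characterization in Lemma 1.3 is equally fine.
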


The proof is easy.
 
\begin{lemma}
Let $\p$ be a regular suborder of $\q$. 
If $D$ is a dense subset of $\q$, then $\p$ forces that 
$D \cap (\q / \dot G_\p)$ is a dense subset of $\q / \dot G_\p$.
\end{lemma}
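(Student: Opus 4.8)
The plan is to prove the lemma by a density argument carried out in the ground model $V$. Fix $q\in\q$ and fix an arbitrary $s\in\p$ with $s\Vdash_\p q\in\q/\dot G_\p$; it will suffice to find, below each such $s$, a condition $s''\le_\p s$ together with some $r\in D$ such that $r\le q$ and $s''\Vdash_\p r\in\q/\dot G_\p$. Indeed, granting this, for any $\p$-generic filter $G$ and any $q\in\q/G$ the set of conditions of this form, together with all conditions forcing $q\notin\q/\dot G_\p$, is dense in $\p$ (given $s_0\in\p$: either some extension of $s_0$ forces $q\notin\q/\dot G_\p$, or $s_0$ itself forces $q\in\q/\dot G_\p$ and the construction applies below $s_0$); a member of $G$ in this set cannot force $q\notin\q/\dot G_\p$, so it is of the first kind and hence witnesses $r\in D\cap(\q/G)$ with $r\le q$.

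To construct $s''$ and $r$: since $s\Vdash_\p q\in\q/\dot G_\p$, Lemma~1.3 applied with $t=s$ tells us that $s$ and $q$ are compatible in $\q$, so (as $\q$ has greatest lower bounds) $s\wedge q$ exists. Using that $D$ is dense in $\q$, choose $r\in D$ with $r\le s\wedge q$; in particular $r\le q$ and $r\le_\q s$. Now apply Lemma~1.1 to $r$ to obtain $s_r\in\p$ with the property that every $t\le_\p s_r$ is compatible with $r$ in $\q$.

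The key point is to amalgamate $s_r$ with $s$. Taking $t=s_r$, the condition $s_r$ is itself compatible with $r$ in $\q$; since $r\le_\q s$, any common extension of $s_r$ and $r$ also extends $s$, so $s_r$ and $s$ are compatible in $\q$, hence compatible in $\p$ by clause~(a) of the definition of regular suborder. Pick $s''\in\p$ with $s''\le_\p s_r$ and $s''\le_\p s$ (no greatest lower bounds in $\p$ are needed). Then every $t\le_\p s''$ extends $s_r$ and is therefore compatible with $r$ in $\q$, so $s''\Vdash_\p r\in\q/\dot G_\p$ by Lemma~1.3; combined with $r\in D$ and $r\le q$, this is exactly what was required.

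I expect the only real obstacle to be this amalgamation step. The tempting shortcut is to pick $r\in D$ below $s\wedge q$ and hope that $r$ already belongs to the quotient, but membership in $\q/\dot G_\p$ depends on the entire generic filter, not just on $s$; so one must pass through the reduction $s_r$ furnished by Lemma~1.1 and invoke clause~(a) of regularity to bring $s_r$ below a common extension with $s$. Once that is in place, everything else is a routine unwinding of Lemma~1.3.
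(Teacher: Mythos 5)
Your proof is correct and follows essentially the same route as the paper's: both take $r\in D$ below $q\wedge s$ (the paper writes $q\wedge p$), apply Lemma 1.1 to reduce $r$ to a condition $s_r$ in $\p$, use clause (a) of regularity to amalgamate $s_r$ with $s$, and conclude via Lemma 1.3 that the resulting extension forces $r\in\q/\dot G_\p$. The only cosmetic difference is that the paper packages this as a proof by contradiction while you run it as a direct density argument in $\p$.
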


\begin{proof}
Suppose for a contradiction that $p \in \p$ and $p$ forces that 
$q$ is in $\q / \dot G_\p$ but $q$ has no extension in 
$D \cap (\q / \dot G_\p)$. 
Then $p$ is compatible with $q$. 
Fix $r \le q \wedge p$ in $D$. 
Apply Lemma 1.1 to find $v$ in $\p$ such that every extension of $v$ 
in $\p$ is compatible with $r$. 
In particular, $v$ is compatible with $r$ and hence with $p$. 
Since $v$ and $p$ are in $\p$ and $\p$ is a regular suborder, 
$v$ and $p$ are compatible in $\p$. 
So without loss of generality assume that $v \le p$. 
By Lemma 1.3, $v$ forces that $r$ is in $\q / \dot G_\p$. 
Since $r \in D$, $v$ forces that $r$ is an extension of $q$ in 
$D \cap (\q / \dot G_\p)$, contradicting that $v \le p$.
\end{proof}

\begin{lemma}
Let $\p$ be a regular suborder of $\q$.
\begin{enumerate}
\item Suppose that $H$ is a $V$-generic filter on $\q$. 
Then $H \cap \p$ is a $V$-generic filter on $\p$ and 
$H$ is a $V[H \cap \p]$-generic filter on $\q / (H \cap \p)$. 
\item Suppose that $G$ is a $V$-generic filter on $\p$ and $H$ is a 
$V[G]$-generic filter on $\q / G$. 
Then $H$ is a $V$-generic filter on $\q$, 
$G = H \cap \p$, and $V[G][H] = V[H]$.
\end{enumerate}
\end{lemma}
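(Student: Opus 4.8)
The plan is to prove both directions as essentially routine verifications of the standard "two-step iteration equals single-step forcing" dictionary, using the lemmas already established, particularly Lemma 1.4 on density and Lemma 1.3 characterizing when a condition lies in the quotient.

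For part (1), suppose $H$ is $V$-generic on $\q$. First I would check that $H \cap \p$ is a filter on $\p$: upward closure and downward directedness are immediate since $\p$ carries the restricted order and $\q$ has greatest lower bounds (if $p, p' \in H \cap \p$ then they are compatible in $\q$, hence in $\p$ since $\p$ is regular, and $p \wedge p'$, being below two elements of the generic filter... — actually here one uses that $H$ is a filter to get a common lower bound in $H$, then extends it into $\p$ using a density argument, or more directly uses that a maximal antichain of $\p$ below two given compatible elements meets $H$). For genericity of $H \cap \p$ over $V$: given a maximal antichain $A$ of $\p$, regularity of $\p$ says $A$ is a maximal antichain of $\q$, so $H$ meets $A$, and the meeting point lies in $\p$, hence in $H \cap \p$. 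Then I would show $H$ is a filter on $\q/(H\cap\p)$ contained in it: every $q \in H$ is compatible with every $s \in H \cap \p \subseteq H$ since $H$ is a filter, so $q \in \q/(H\cap\p)$ by the definition of the quotient (evaluated at the generic $H \cap \p$). Finally, genericity of $H$ over $V[H\cap\p]$: given a dense subset $D$ of $\q/(H\cap\p)$ in $V[H\cap\p]$, one pulls back to a $\p$-name and uses Lemma 1.4 together with a density argument in $\q$ to see $H$ meets $D$; more precisely, the set of $q \in \q$ forcing membership in (or incompatibility with the interpretation of) the name for $D$ is handled by a maximal-antichain argument, and since $D$ was obtained by Lemma 1.4 as $D' \cap (\q/\dot G_\p)$ for $D'$ dense in $\q$ in the key case, $H$ meets $D'$ and the witness lies in the quotient.

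For part (2), suppose $G$ is $V$-generic on $\p$ and $H$ is $V[G]$-generic on $\q/G$. I would first show $H$ generates a filter on $\q$ (it is already upward closed in $\q$ by Lemma 1.3's companion, Lemma 1.3 being the one about upward closure — actually Lemma 1.3 as numbered gives that $\q/\dot G_\p$ is upward closed in $\q$, so $H$ is upward closed in $\q$; downward directedness is inherited since $\q/G$ has the same order). Then $G \subseteq H$: each $s \in G$ is compatible with every $t \in G$, so using Lemma 1.3 (the characterization), $s \in \q/G$, and a density argument inside $\q/G$ over $V[G]$ shows $G \subseteq H$ — indeed for $s \in G$ the set $\{q \in \q/G : q \le s \text{ or } q \perp s\}$ is dense in $\q/G$, and $q \perp s$ is impossible for $q \in \q/G$ with $s \in G$, so $H$ meets it below $s$, giving $s \in H$ by upward closure. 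Hence $H \cap \p \supseteq G$, and equality follows because $G$ is generic on $\p$ hence an ultrafilter on the Boolean completion, or more elementarily because any $p \in H \cap \p$ is compatible with all of $G$ (as $p \in H$ and $G \subseteq H$), so $p \in G$ by maximality/genericity of $G$. For genericity of $H$ over $V$: let $D$ be dense in $\q$, $D \in V$. By Lemma 1.4, $\p$ forces $D \cap (\q/\dot G_\p)$ dense in $\q/\dot G_\p$, so in $V[G]$ the set $D \cap (\q/G)$ is dense in $\q/G$, whence $H$ meets it, so $H \cap D \ne \emptyset$. Finally $V[G][H] = V[H]$: clearly $V[H] \subseteq V[G][H]$, and conversely $G = H \cap \p \in V[H]$, so $V[G][H] \subseteq V[H]$.

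The main obstacle I expect is the careful bookkeeping around genericity of $H$ over the intermediate model in part (1): one must be sure that an arbitrary dense subset of the quotient as computed \emph{in $V[H\cap\p]$} (not just those arising from Lemma 1.4) is met by $H$, which requires the standard argument that a $\p$-name for a dense subset of $\q/\dot G_\p$ can be refined, via Lemma 1.4 and a maximal antichain of $\p$, to something whose realization $H$ provably meets — this is where the regularity of $\p$ (clause (b)) does the real work. Everything else is chasing definitions through Lemmas 1.1–1.4.
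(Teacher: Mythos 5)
Your overall strategy matches the paper's, and most of the work is sound, but you leave the hardest step of part (1) — genericity of $H$ over $V[H\cap\p]$ — as an acknowledged gap rather than closing it. Your gesture toward "pulling back to a $\p$-name and refining via a maximal antichain" is pointed in the right direction, but your phrasing ("since $D$ was obtained by Lemma 1.4 as $D' \cap (\q/\dot G_\p)$ for $D'$ dense in $\q$ in the key case") suggests you were tempted to treat only dense sets of that special form, which is not enough: an arbitrary dense subset of the quotient computed in $V[H\cap\p]$ need not be the trace of a ground-model dense subset of $\q$. The clean fix, which is what the paper does, is: given a $\p$-name $\dot D$ for a dense subset of $\q/\dot G_\p$, form the set $D'$ of all meets $p\wedge q$ with $p\in\p$ and $p\Vdash_\p q\in\dot D$. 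A short argument using Lemma 1.1 (to find $\p$-conditions all of whose extensions are compatible with a given $\q$-condition) and Lemma 1.3 shows $D'$ is dense in $\q$, hence meets $H$; from $p\wedge q\in H$ one reads off $q\in H$ (upward closure) and $q\in \dot D^{H\cap\p}$ (since $p\in H\cap\p$). That concrete construction is the missing content, and it is exactly where regularity of $\p$ does the real work, as you anticipated.

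Two smaller points. First, you swap the roles of Lemmas 1.3, 1.4, and 1.5 in a couple of places: 1.4 is the upward-closure lemma and 1.5 is the density-transfer lemma, so your citations should be corrected accordingly. Second, in part (2) your step "$p\in H\cap\p$ is compatible with all of $G$, so $p\in G$" needs one more beat: the compatibility you get from $H$ being a filter is compatibility in $\q$, and you must invoke clause (a) of regularity to upgrade it to compatibility in $\p$ before applying the standard "a condition compatible with everything in a generic filter is in that filter" fact. The paper phrases this as: $G$ and $H\cap\p$ are both $V$-generic on $\p$ (by part (1), now that $H$ is known to be $V$-generic on $\q$) and are pairwise compatible in $\p$, hence equal — a slightly cleaner packaging than your two inclusions, though your route via $G\subseteq H$ using the dense set $\{q\in\q/G : q\le s \text{ or } q\perp s\}$ is also valid once one observes that this is the trace of a ground-model dense subset of $\q$ under Lemma 1.5.
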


\begin{proof}
(1) Suppose that $H$ is a $V$-generic filter on $\q$. 
Since any maximal antichain of $\p$ is a maximal antichain of $\q$, 
$H \cap \p$ meets every maximal antichain of $\p$. 
A straightforward density argument shows that $H \cap \p$ 
is a filter. 
So $H \cap \p$ is a $V$-generic filter on $\p$.

Since $H$ is a filter, every member of $H$ is compatible in $\q$ 
with every member of $H \cap \p$. 
So $H \subseteq \q / (H \cap \p)$ and $H$ is a filter on $\q / (H \cap \p)$. 
We will show that $H$ meets every dense subset 
of $\q / (H \cap \p)$ in $V[H \cap \p]$. 

Let $\dot D$ be a $\p$-name for a dense subset of $\q / \dot G_\p$ and 
let $D := \dot D^{H \cap \p}$. 
We will show that $D \cap H \ne \emptyset$. 
Let $D'$ be the set of conditions in $\q$ of the form 
$p \wedge q$, where $p$ is in $\p$ and $p$ forces that 
$q$ is in $\dot D$. 
Then $D'$ is dense in $\q$ by a straightforward argument 
using Lemmas 1.1 and 1.3. 
Fix $p \wedge q$ in $D' \cap H$. 
Then $q \in H$, and since $p \in H \cap \p$, 
$q \in D$.

(2) By Lemma 1.4, $H$ is closed upwards in $\q$, so $H$ is a filter on $\q$. 
Let $D$ be a dense subset of $\q$ in $V$, and we will show that 
$H \cap D \ne \emptyset$. 
By Lemma 1.5, $D \cap (\q / G)$ is a dense subset of $\q / G$. 
Since $H$ is a $V[G]$-generic filter on $\q / G$, it meets $D \cap (\q / G)$ 
and hence $D$.

To show that $V[G][H] = V[H]$, it suffices to show that 
$G = H \cap \p$. 
Since $G$ and $H \cap \p$ are both $V$-generic filters on $\p$ by (1), it
suffices to show that every condition in $G$ is compatible with 
every condition in $H \cap \p$. 
But if $q \in H \cap \p$, then $q \in \q / G$, which implies that $q$ is 
compatible in $\q$ with every condition in $G$. 
Since $\p$ is a regular suborder of $\q$, 
$q$ is compatible in $\p$ with every condition in $G$.
\end{proof}

It follows that $\q$ is forcing equivalent to $\p * (\q / \dot G_\p)$. 
In fact, the function which sends a condition in $\p * (\q / \dot G_\p)$ of 
the form $p * \check q$ to $p \wedge q$ is a dense embedding defined 
on a dense subset of $\p * (\q / \dot G_\p)$. 

The next technical lemma will be used later in the paper.

\begin{lemma}
Let $\p$ be a regular suborder of $\q$. 
Suppose that $q \in \q$, $p \in \p$, and $p$ forces in $\p$ that 
$q$ is not in $\q / \dot G_\p$. Then $p$ and $q$ are incompatible.
\end{lemma}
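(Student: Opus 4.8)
The plan is to prove the contrapositive: assuming $p$ and $q$ are compatible in $\q$, I will find an extension of $p$ in $\p$ which forces $q \in \q / \dot G_\p$, contradicting that $p$ forces the opposite. So suppose $p \wedge q$ exists in $\q$. By Lemma 1.1 applied to the condition $p \wedge q$, there is $s \in \p$ such that every $t \le s$ in $\p$ is compatible in $\q$ with $p \wedge q$. In particular $s$ is compatible with $p \wedge q$, hence compatible with $p$; since $s$ and $p$ both lie in $\p$ and $\p$ is a regular suborder, they are compatible in $\p$, so we may fix $v \le_\p s$ with $v \le_\p p$.

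Now every $t \le v$ in $\p$ is also $\le s$ in $\p$, so $t$ is compatible in $\q$ with $p \wedge q$, and therefore $t$ is compatible in $\q$ with $q$ (any common extension of $t$ and $p \wedge q$ extends $q$). By Lemma 1.3 (the equivalence of (1) and (2) there, with $s$ replaced by $v$), $v \Vdash_\p q \in \q / \dot G_\p$. But $v \le_\p p$ and $p \Vdash_\p q \notin \q / \dot G_\p$, so $v$ forces a contradiction. Hence $p$ and $q$ are incompatible, as desired.

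I expect no real obstacle here; the only point requiring a moment's care is the step passing from compatibility of $t$ with $p\wedge q$ to compatibility of $t$ with $q$, which is immediate since $p \wedge q \le q$. The lemma is essentially a direct combination of Lemma 1.1 (to produce a master condition in $\p$ below which everything is compatible with $p\wedge q$), regularity of $\p$ (to transfer compatibility in $\q$ to compatibility in $\p$), and Lemma 1.3 (to recognize the resulting condition as forcing membership in the quotient).
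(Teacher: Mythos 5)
Your proof is correct, but it takes a genuinely different route from the paper's. The paper argues by contradiction via generic filters: assuming $r \le p, q$ in $\q$, it takes a $V$-generic $H \ni r$ on $\q$, uses Lemma 1.6 to conclude $H \cap \p$ is $V$-generic on $\p$, and derives a contradiction from $p \in H \cap \p$ forcing $q \notin \q/(H\cap \p)$ while $q \in H$ is compatible with everything in $H$. Your argument stays entirely at the level of the forcing relation: using Lemma 1.1 on $p \wedge q$ to get a master condition $s$, regularity to merge $s$ with $p$ into $v \le_\p p$, and Lemma 1.3 to recognize that $v$ forces $q \in \q/\dot G_\p$, contradicting what $p$ forces. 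Your version is more elementary and explicit in that it exhibits a concrete extension of $p$ witnessing the failure of the forcing assertion, whereas the paper's is shorter and cleaner once the filter-restriction machinery of Lemma 1.6 is already in hand. One small caveat: your argument uses the existence of $p \wedge q$, which the paper has already fixed as a standing hypothesis on $\q$ for the section, so this is fine; it is worth noting, though, that the paper's generic-filter proof goes through even without greatest lower bounds (a common extension $r$ suffices), so in that sense the paper's argument is marginally more general.
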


\begin{proof}
Suppose for a contradiction that $p$ forces that $q$ is not in 
$\q / \dot G_\p$, but there is $r \le p, q$ in $\q$. 
Let $H$ be a $V$-generic filter on $\q$ such that $r \in H$. 
Then $p$ and $q$ are in $H$. 
Since $\p$ is a regular suborder of $\q$, 
$H \cap \p$ is a $V$-generic filter on $\p$ by Lemma 1.6. 
As $p \in H \cap \p$, $q$ is not in $\q / (H \cap \p)$. 
Therefore $q$ is incompatible in $\q$ with some member of $H \cap \p$. 
But this is impossible since $q \in H$ and $H$ is a filter.
\end{proof}

\bigskip

We now provide the necessary background on guessing models, the 
approximation property, and $\mathsf{ISP}$.

\begin{definition}
A set $N$ is said to be \emph{$\omega_1$-guessing} 
if for any set of ordinals 
$d \subseteq N$ such that $\sup(d) < \sup(N \cap On)$, 
if $d$ satisfies that for any 
countable set $b \in N$, $d \cap b \in N$, then there exists $d' \in N$ 
such that $d = d' \cap N$.
\end{definition}

\begin{definition}
Let $W_1$ and $W_2$ be transitive with $W_1 \subseteq W_2$. 
We say that the pair $(W_1,W_2)$ has the 
\emph{$\omega_1$-approximation property} 
if whenever $d \in W_2$ is a bounded 
subset of $W_1 \cap On$ and satisfies that $b \cap d \in W_1$ 
for any set $b \in W_1$ which is countable in $W_1$, 
then $d \in W_1$.
\end{definition}

\begin{lemma}
Let $N$ be an elementary substructure of $H(\chi)$ for some uncountable 
cardinal $\chi$. 
Then the following are equivalent:
\begin{enumerate}
\item $N$ is an $\omega_1$-guessing model;
\item the pair $(\overline N,V)$ has the $\omega_1$-approximation 
property, where $\overline N$ is the transitive collapse of $N$.
\end{enumerate}
\end{lemma}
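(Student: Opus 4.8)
The plan is to work with the transitive collapsing isomorphism $\pi \colon N \to \overline{N}$ and to translate the guessing data for $N$ and the approximation data for $(\overline{N}, V)$ into each other along $\pi$. I would begin by recording two routine facts. First: if $b \in N$ is countable then, since $\chi$ is an uncountable cardinal and $N \prec H(\chi)$, $b$ is countable \emph{in} $N$, so $b \subseteq N$, $\pi(b) = \pi[b]$, and $\pi(b)$ is countable in $\overline{N}$; conversely, every subset of $\overline{N}$ that is countable in $\overline{N}$ has the form $\pi(b)$ for such a $b$. Second: for any $c \in N$ one has $\pi(c) = \pi[c \cap N]$, and $\pi \restriction (N \cap \mathrm{On})$ is an order isomorphism onto $\overline{N} \cap \mathrm{On}$, so $\pi$ and $\pi^{-1}$ carry bounded sets of ordinals to bounded sets of ordinals.

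For $(1) \Rightarrow (2)$, suppose $N$ is $\omega_1$-guessing, and let $d \in V$ be a bounded subset of $\overline{N} \cap \mathrm{On}$, say $d \subseteq \pi(\alpha_0)$ with $\alpha_0 \in N \cap \mathrm{On}$, such that $b' \cap d \in \overline{N}$ whenever $b' \in \overline{N}$ is countable in $\overline{N}$. Set $e := \pi^{-1}[d] = \{ \alpha \in N \cap \mathrm{On} : \pi(\alpha) \in d \}$, which is bounded in $N \cap \mathrm{On}$ by $\alpha_0$. The point is that $e$ satisfies the hypothesis in the definition of $\omega_1$-guessing: for a countable $b \in N$, the first fact makes $b' := \pi(b)$ countable in $\overline{N}$, so $b' \cap d \in \overline{N}$, and then $e \cap b = \pi^{-1}(b' \cap d) \in N$ since both sides have $\pi$-image $b' \cap d$. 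So there is $e' \in N$ with $e = e' \cap N$, and then $\pi(e') = \pi[e' \cap N] = \pi[e] = d$, whence $d \in \overline{N}$.

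For $(2) \Rightarrow (1)$, which is the same computation in reverse, suppose $(\overline{N}, V)$ has the $\omega_1$-approximation property, and let $d \subseteq N$ be a set of ordinals with $\sup(d) < \sup(N \cap \mathrm{On})$ such that $d \cap b \in N$ for every countable $b \in N$. Choose $\alpha_0 \in N \cap \mathrm{On}$ with $\sup(d) < \alpha_0$ and set $\overline{d} := \pi[d]$, a subset of $\overline{N} \cap \mathrm{On}$ bounded by $\pi(\alpha_0)$. If $b' \in \overline{N}$ is countable in $\overline{N}$, write $b' = \pi(b)$ with $b \in N$ countable; then $d \cap b \in N$, and, since $\pi$ is injective on $N$ and $b, d \subseteq N$, $b' \cap \overline{d} = \pi[b \cap d] = \pi(b \cap d) \in \overline{N}$. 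Thus $\overline{d}$ witnesses the approximation hypothesis, so $\overline{d} \in \overline{N}$; setting $d' := \pi^{-1}(\overline{d}) \in N$ we get $\pi[d' \cap N] = \pi(d') = \overline{d} = \pi[d]$, so $d' \cap N = d$, as required.

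I expect the only point needing genuine care to be the first fact above: the three notions of ``countable'' --- computed in $V$, in $N$, and in $\overline{N}$ --- must be reconciled, and this is exactly where the hypotheses that $\chi$ is an uncountable cardinal and that $N$ is an elementary substructure of $H(\chi)$ get used. Everything else is bookkeeping with the collapsing map.
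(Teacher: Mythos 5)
Your proposal is correct and follows essentially the same approach as the paper's proof: set up the transitive collapsing map, define $e := \pi^{-1}[d]$ (resp.\ $\pi[d]$) and verify it satisfies the transferred hypothesis by pushing the countable sets back and forth along the collapse, then apply the relevant hypothesis and map the witness back. The two ``routine facts'' you record at the outset are exactly the observations the paper uses inline, so the only difference is organizational.
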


\begin{proof}
Let $\sigma : N \to \overline N$ be the transitive collapsing map.

$(1 \Rightarrow 2)$ 
Assume that $N$ is an $\omega_1$-guessing model. 
To show that $(\overline N,V)$ has the $\omega_1$-approximation property, 
let $d$ be a bounded subset of $\overline N \cap On$ and assume that 
whenever $\overline N$ models that $b$ is a countable set of ordinals, 
$b \cap d \in \overline N$. 
We will prove that $d \in \overline N$.

Let $e := \sigma^{-1}[d]$. 
Then $e$ is a subset of $N \cap On$ and $\sup(e) < \sup(N \cap On)$. 
Let $b \in N$ be a countable set of ordinals, and we show that 
$e \cap b \in N$. 
Since $N \prec H(\chi)$, $N$ models that $b$ is countable. 
Therefore $\overline N$ models that $\sigma(b)$ is countable. 
It follows that $d \cap \sigma(b) \in \overline N$. 
Hence $\sigma^{-1}(d \cap \sigma(b)) \in N$. 
Since $b$ and $d \cap \sigma(b)$ are countable, 
$\sigma^{-1}(d \cap \sigma(b)) = 
\sigma^{-1}[d \cap \sigma[b]] = 
\sigma^{-1}[d] \cap \sigma^{-1}[\sigma[b]] = e \cap b$. 
Therefore $e \cap b$ is in $N$.

Since $N$ is an $\omega_1$-guessing model, fix $e' \in N$ such that 
$e = e' \cap N$. 
Then $\sigma(e') \in \overline{N}$ and 
$\sigma(e') = \sigma[e' \cap N] = \sigma[e] = d$. 
It follows that $d$ is in $\overline N$, as desired.

$(2 \Rightarrow 1)$ 
Suppose that $(\overline N,V)$ has the $\omega_1$-approximation property. 
Let $d \subseteq N \cap On$ be given with $\sup(d) < \sup(N \cap On)$, 
and assume that for any 
countable set $b \in N$, $d \cap b \in N$. 
Let $e := \sigma[d]$, which is a bounded subset of $\overline N \cap On$. 
Suppose that $\overline N$ models that $b$ is a countable set of ordinals. 
Then $\sigma^{-1}(b)$ is a countable set in $N$, 
so $d \cap \sigma^{-1}(b) \in N$. 
Since $b$ and $d \cap \sigma^{-1}(b)$ are countable, 
$\sigma(d \cap \sigma^{-1}(b)) = \sigma[d \cap \sigma^{-1}[b]] = 
\sigma[d] \cap b = e \cap b$. 
Hence $e \cap b$ is in $\overline N$. 
By the $\omega_1$-approximation property, $e$ is in $\overline N$. 
Hence $d' := \sigma^{-1}(e)$ is in $N$. 
But $d = d' \cap N$, as can be easily checked.
\end{proof}

\begin{definition}
We say that the principle $\mathsf{ISP}(\omega_2)$ holds if 
for all sufficiently large regular uncountable cardinals $\chi$, there are 
stationarily many $N$ in $P_{\omega_2}(H(\chi))$ such that 
$N \cap \omega_2 \in \omega_2$, $N \prec H(\chi)$, and 
$N$ is an $\omega_1$-guessing model. 
\end{definition}

The principle $\mathsf{ISP}(\omega_2)$ was introduced by 
Weiss (\cite{weiss1}, \cite{weiss2}), in a different form which asserts the 
existence of an ineffable branch for any slender $P_{\omega_2}(\lambda)$-list, 
for all cardinals $\lambda \ge \omega_2$. 
Roughly speaking, $\mathsf{ISP}(\omega_2,\lambda)$ is a 
$P_{\omega_2}(\lambda)$ version of the tree property, where usual the 
tree property on $\omega_2$ states that every $\omega_2$-tree 
has a cofinal branch. 
The equivalence of this principle with Definition 1.11 is proven 
in \cite[Section 3]{weiss1}. 
See \cite[Section 2]{weiss1} for the alternative definition 
of $\mathsf{ISP}$ and a discussion of the principle. 

\section{Quotients of Strongly Proper Forcings}

In this section we prove our main theorem on quotients of 
strongly proper forcings. 
We introduce the idea of a simple universal strongly $N$-generic condition,  
and show that under some circumstances quotients of forcing posets 
which have such conditions for stationarily many $N$ are well behaved.
 
The following definition was introduced by Mitchell in a slightly 
different form (\cite{mitchell2}). 

\begin{definition}
Let $\q$ be a forcing poset, $q \in \q$, and $N$ a set. 
We say that $q$ is a \emph{strongly $(N,\q)$-generic condition} 
if for any set $D$ which is dense in the forcing poset $N \cap \q$, 
$D$ is predense in $\q$ below $q$.
\end{definition}

If $\q$ is understood from context, we just say that $q$ is a 
strongly $N$-generic condition.

For a forcing poset $\q$, let $\lambda_\q$ 
denote the smallest uncountable cardinal $\lambda$ such that 
$\q \subseteq H(\lambda)$. 
Note that for any set $N$, a condition $q \in \q$ 
is strongly $N$-generic iff 
$q$ is strongly $(N \cap H(\lambda_\q))$-generic.

The next lemma was basically proven in \cite[Proposition 2.15]{mitchell2}. 
We include a proof for completeness.

\begin{lemma}
Let $\q$ be a forcing poset, $q \in \q$, and $N$ a set. 
Then the following are equivalent:
\begin{enumerate}
\item $q$ is strongly $N$-generic;
\item there is a function $r \mapsto r \restriction N$ defined on 
conditions $r \le q$ satisfying that $r \restriction N \in N \cap \q$ 
and for all $v \le r \restriction N$ in $N \cap \q$, $r$ and $v$ are compatible. 
\end{enumerate}
\end{lemma}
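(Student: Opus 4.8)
The plan is to prove the two implications separately. The implication $(2)\Rightarrow(1)$ is essentially immediate: assuming the restriction function $r\mapsto r\restriction N$ exists, let $D$ be dense in $N\cap\q$ and fix any $r\le q$. Then $r\restriction N\in N\cap\q$, so by density of $D$ we may pick $v\le r\restriction N$ with $v\in D\subseteq N\cap\q$. The defining property of the restriction function gives that $r$ and $v$ are compatible, so $r$ is compatible with a member of $D$. Since $r\le q$ was arbitrary, $D$ is predense in $\q$ below $q$, which is exactly the assertion that $q$ is strongly $N$-generic.

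The real work is in $(1)\Rightarrow(2)$. Assume $q$ is strongly $N$-generic. For each $r\le q$ I would consider the set
$$D_r=\{v\in N\cap\q:\ v\perp r\ \text{ or }\ (\forall v'\le v\text{ in }N\cap\q)\ v'\not\perp r\}.$$
The key step is to check that $D_r$ is dense in $N\cap\q$: given $u\in N\cap\q$, either some extension $v'$ of $u$ in $N\cap\q$ is incompatible with $r$ — in which case $v'\le u$ and $v'\in D_r$ via the first alternative — or else every extension of $u$ in $N\cap\q$ is compatible with $r$, in which case $u$ itself belongs to $D_r$ via the second alternative. Either way, $D_r$ meets the cone below $u$, so $D_r$ is dense.

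Since $q$ is strongly $N$-generic and $r\le q$, the set $D_r$ is predense in $\q$ below $q$, hence $r$ is compatible with some $v\in D_r$. Such a $v$ cannot satisfy the first alternative (it is compatible with $r$), so it satisfies the second: every extension of $v$ in $N\cap\q$ is compatible with $r$. Using the axiom of choice, I would fix for each $r\le q$ one such $v$ and define $r\restriction N$ to be it; then $r\restriction N\in N\cap\q$ and, by construction, every $v\le r\restriction N$ in $N\cap\q$ is compatible with $r$, as required. There is no serious obstacle here — the entire content lies in the density of $D_r$, which is a routine case split — and indeed this is essentially Mitchell's Proposition 2.15; the remaining points are bookkeeping and one appeal to choice to make the restriction map a genuine function.
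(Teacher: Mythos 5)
Your proof is correct, and the underlying idea is the same as the paper's: the key observation is that the conditions in $N\cap\q$ incompatible with a fixed $r\le q$ form a dense set if no suitable $r\restriction N$ exists, which contradicts strong genericity. The only difference is one of packaging: the paper proves $(1)\Rightarrow(2)$ by contrapositive (assuming some $r\le q$ has no valid restriction and exhibiting the dense-but-not-predense set $\{w\in N\cap\q : w\perp r\}$), whereas you argue directly by introducing the auxiliary dense set $D_r$ that encodes the relevant dichotomy and then extracting the witness via predensity below $q$; both are routine and essentially equivalent.
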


\begin{proof}
For the forward direction, suppose that there is 
$r \le q$ for which there does not exist a condition 
$r \restriction N$ all of whose extensions in $N \cap \q$ 
are compatible with $r$. 
Let $D$ be the set of $w \in N \cap \q$ which are incompatible with $r$. 
The assumption on $r$ implies that $D$ is dense in $N \cap \q$. 
But $D$ is not predense below $q$ since every condition in $D$ is 
incompatible with $r$. 
So $q$ is not strongly $N$-generic.

Conversely, assume that there is a function 
$r \mapsto r \restriction N$ as described. 
Let $D$ be dense in $N \cap \q$ and let $r \le q$. 
Fix $v \le r \restriction N$ in $D$. 
Then $r$ and $v$ are compatible. 
So $D$ is predense below $q$.
\end{proof}

We introduce two strengthenings of strong genericity, namely simple and universal.

\begin{definition}
Let $\q$ be a forcing poset, $q \in \q$, and $N$ a set. 
We say that $q$ is a \emph{universal strongly $(N,\q)$-generic condition} if 
$q$ is a strongly $(N,\q)$-generic condition and for all 
$p \in N \cap \q$, $p$ and $q$ are compatible.
\end{definition}

\begin{definition}
Let $\q$ be a forcing poset, $q \in \q$, and $N$ a set. 
We say that $q$ is a \emph{simple strongly $(N,\q)$-generic condition} if 
there exists a set $E \subseteq \q$ which is 
dense below $q$ and a function $r \mapsto r \restriction N$ 
defined on $E$ such that for all $r \in E$, 
$r \restriction N \in N \cap \q$, $r \le r \restriction N$, 
and for all  
$v \le r \restriction N$ in $N \cap \q$, $r$ and $v$ are compatible.
\end{definition}

The difference between simple and ordinary strongly generic conditions as 
described in Lemma 2.2(2) is the additional assumption in simple that 
$r \le r \restriction N$. 
But if the forcing poset has greatest lower bounds, then the two ideas are 
equivalent; 
see Lemma 2.5 below.

Note that if $q$ is a simple strongly $N$-generic condition, then 
$q$ is strongly $N$-generic. 
Namely, given $r \le q$ not in $E$, first extend $r$ to $s$ in $E$ 
and then define 
$r \restriction N$ to be $s \restriction N$.

While these definitions are new, most strongly generic conditions in 
the literature satisfy them. 
For example, all of the adequate set type forcings described in \cite{jk21} 
have simple universal strongly generic conditions for countable 
elementary substructures.

It turns out that for forcing posets with greatest lower bounds, every strongly 
generic condition is simple.

\begin{lemma}
Let $\q$ be a forcing poset with greatest lower bounds. 
Let $N$ be a set and $q \in \q$. 
If $q$ is a strongly $N$-generic condition, then $q$ is a simple strongly $N$-generic 
condition.
\end{lemma}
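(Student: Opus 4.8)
The plan is to start from the assumption that $q$ is strongly $N$-generic and invoke Lemma 2.2(2) to obtain a function $r \mapsto r \restriction N$ defined on all $r \le q$ with the property that $r \restriction N \in N \cap \q$ and every extension of $r \restriction N$ in $N \cap \q$ is compatible with $r$. The issue is that this raw function need not satisfy $r \le r \restriction N$, which is exactly what is required for simplicity. The natural fix is to exploit the hypothesis that $\q$ has greatest lower bounds: since $r$ and $r \restriction N$ are compatible (indeed $r$ witnesses this against the condition $r \restriction N$ itself, as $r \restriction N \le r \restriction N$), the greatest lower bound $r \wedge (r \restriction N)$ exists.

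So first I would set $E := \{ r \wedge (r \restriction N) : r \le q \}$. I claim $E$ is dense below $q$: given any $r \le q$, the condition $r \wedge (r \restriction N)$ lies below $r$ and belongs to $E$. Then for $s \in E$, say $s = r \wedge (r \restriction N)$ with $r \le q$, I would define $s \restriction N := r \restriction N$. (One should be slightly careful that $s$ might admit several such representations; either fix a choice of $r$ for each $s$ via the axiom of choice, or simply regard $E$ as the set of pairs, which is harmless. I would phrase it using a choice of representative.) With this definition, $s = r \wedge (r \restriction N) \le r \restriction N = s \restriction N$, so the key extra clause for simplicity holds. Also $s \restriction N = r \restriction N \in N \cap \q$ as needed.

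The remaining point is that every $v \le s \restriction N$ in $N \cap \q$ is compatible with $s$. Here is where I expect the only real subtlety. We know $v$ is compatible with $r$ since $v \le r \restriction N$ and $q$ is strongly $N$-generic via that function. But compatibility with $r$ does not immediately give compatibility with the smaller condition $s = r \wedge (r \restriction N)$. To handle this I would argue as follows: the function $v \mapsto$ (a witness of compatibility) is not enough, so instead I would re-apply strong genericity more carefully. Given $v \le s \restriction N = r \restriction N$ in $N \cap \q$, I want a common extension of $v$ and $s$. Take $D$ to be the set of conditions in $N \cap \q$ extending $v$; this is dense in $N \cap \q$ below $v$. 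The cleanest route is to observe that since $q$ is strongly $N$-generic, for the condition $s \le q$ the set $D_v$ of elements of $N \cap \q$ incompatible with $s$ is \emph{not} dense below any $w \in N \cap \q$ that forces otherwise — more directly, apply Lemma 2.2(1)$\Leftrightarrow$(2) freshly to $s \le q$ to get a restriction function on extensions of $s$; but we must connect it to $r \restriction N$.

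Actually the honest obstacle is precisely reconciling the restriction function value for $s$ with the compatibility clause, and the right move is: since $q$ is strongly $N$-generic, apply Lemma 2.2 to get the restriction function on all $r' \le q$, and note $s \le q$, so $s \restriction N$ (in the sense of that function) already works for $s$; the point of the construction is only to \emph{also} arrange $s \le s\restriction N$. So I would instead define, for $s \in E$ of the form $r \wedge (r\restriction N)$, the value $s \restriction_{\text{simple}} N$ to be the greatest lower bound of $s \restriction N$ (the Lemma 2.2 value) and $r \restriction N$; since $s \le$ both of these, this glb exists and $s$ lies below it, and any $v$ below it lies below $s \restriction N$ hence is compatible with $s$. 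This cleanly gives all three clauses of Definition 2.4, completing the proof.
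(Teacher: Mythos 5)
Your setup is exactly the paper's: take a strongly $N$-generic $q$, extract a restriction function via Lemma~2.2, and use greatest lower bounds to replace $r$ by $r \wedge (r\restriction N)$ so that the simplicity clause $s \le s\restriction N$ holds. You also correctly flagged the one real subtlety: knowing $v$ is compatible with $r$ does not immediately give $v$ compatible with the smaller condition $s = r \wedge (r\restriction N)$. Unfortunately, your final fix for that subtlety contains an error.

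You propose defining $s\restriction_{\mathrm{simple}} N := (s\restriction N) \wedge (r\restriction N)$ where $s\restriction N$ is the Lemma~2.2 value applied to $s \le q$, and you assert that ``$s \le$ both of these.'' But Lemma~2.2 gives no inequality $s \le s\restriction N$; it only gives $s\restriction N \in N \cap \q$ together with the compatibility clause. That missing inequality is the entire point of what you are trying to prove, so the argument is circular there, and it also leaves unjustified why the glb $(s\restriction N) \wedge (r\restriction N)$ even exists.

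The subtlety you identified has a direct resolution that requires no second application of Lemma~2.2 and stays entirely inside your construction of $E$. Given $v \le r\restriction N$ in $N \cap \q$, you already know $v$ is compatible with $r$, so $r \wedge v$ exists. Now observe that $r \wedge v \le r$ and $r \wedge v \le v \le r\restriction N$, so $r \wedge v$ is a common lower bound of $r$ and $r\restriction N$; since $s = r \wedge (r\restriction N)$ is the \emph{greatest} such lower bound, $r \wedge v \le s$. Thus $r \wedge v$ witnesses that $v$ and $s$ are compatible, and you may simply set $s\restriction N := r\restriction N$ with no further modification. This is precisely the computation the paper performs: after choosing $u \in N \cap \q$ with every $N\cap\q$-extension of $u$ compatible with $r_0$, it sets $r := r_0 \wedge u$, $r\restriction N := u$, and for $v \le u$ notes $r_0 \wedge v \le r_0 \wedge u = r$.
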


\begin{proof}
Let $r_0 \le q$, and we will find $r \le r_0$ and $r \restriction N \in N \cap \q$ 
such that $r \le r \restriction N$, and for all $v \le r \restriction N$ in 
$N \cap \q$, $r$ and $v$ are compatible. 
Since $q$ is strongly $N$-generic, there is a condition $u \in N \cap \q$ such that 
for all $v \le u$ in $N \cap \q$, $r_0$ and $v$ are compatible. 
In particular, $r_0$ and $u$ are compatible. 
Let $r := r_0 \land u$ and $r \restriction N := u$. 
Then $r \le r_0$, $r \restriction N \in N \cap \q$, and $r \le r \restriction N$.

Let $v \le r \restriction N$ be in $N \cap \q$, and we will show that 
$r$ and $v$ are compatible. 
Then $v \le u$, so by the choice of $u$, $r_0$ and $v$ are compatible. 
Therefore $r_0 \land v$ exists. 
But $r_0 \land v \le v$, and 
since $v \le u$, $r_0 \land v \le r_0 \land u = r$. 
So $r$ and $v$ are compatible.
\end{proof}

\begin{definition}
A forcing poset $\q$ is \emph{strongly proper} if for all large enough regular 
cardinals $\chi$, there are club many countable elementary substructures 
$N$ of $H(\chi)$ satisfying that whenever $p \in N \cap \q$, 
there exists a strongly $N$-generic condition below $p$.
\end{definition}

This definition, introduced by Mitchell, is defined in a way similar to 
the usual definition of proper forcing. 
But by standard arguments and the comments after Definition 2.1, 
this definition is equivalent to the existence of 
club many countable sets $N$ in 
$P_{\omega_1}(H(\lambda_\q))$ 
such that every $p \in N \cap \q$ has a strongly 
$N$-generic extension.

\begin{definition}
Let $\q$ be a forcing poset. 
We say that $\q$ is \emph{strongly proper on a stationary set} if there are 
stationarily many $N$ in $P_{\omega_1}(H(\lambda_\q))$ 
such that whenever $p \in N \cap \q$, 
there is $q \le p$ which is a strongly $N$-generic condition.
\end{definition}

By standard arguments we get an equivalent property by replacing $\lambda_\q$ with any 
regular cardinal $\lambda \ge \lambda_\q$; similar comments apply to the 
following definition.

\begin{definition}
A forcing poset $\q$ is said to 
\emph{have universal strongly generic conditions on a 
stationary set} if there are 
stationarily many $N$ in $P_{\omega_1}(H(\lambda_\q))$ 
such that there exists a universal strongly $N$-generic condition.
\end{definition}

For a forcing poset $\q$ with greatest lower bounds, 
we will consider suborders $\p$ of $\q$ satisfying the following 
compatibility property. 
This property will be crucial for the rest of the paper.

\bigskip

\noindent $*(\p,\q)$: for all $x$ in $\p$ and $y$, $z$ in $\q$, 
if $x$, $y$, and $z$ are pairwise compatible, 
then $x$ is compatible with $y \wedge z$.

\bigskip

Note that $*(\q,\q)$ implies that $*(\p,\q)$ for all suborders $\p$ of $\q$. 
A large number of forcing posets $\q$ satisfy the property $*(\q,\q)$. 
These include Cohen forcings, collapsing forcings, adding a club by 
initial segments, and many side condition forcings. 
In general, forcing posets where greatest lower bounds are given by unions 
tend to satisfy it. 
In contrast, most Boolean algebras do not satisfy the property. 
For example, consider a field of sets, and let $A$ and $B$ be sets with 
nonempty intersection and relative complements. 
Then $A$, $B$, and $A \triangle B$ have pairwise nonempty intersections, 
but $A$ has empty intersection with $B \cap (A \triangle B) = B \setminus A$.

Our main theorem on quotients Theorem 2.11 
states that if $\q$ has universal 
strongly generic conditions on a stationary set, and $\p$ is a regular 
suborder satisfying $*(\p,\q)$, 
then $\p$ forces that the quotient $\q / G_{\dot \p}$ has 
universal strongly generic conditions on a stationary set.

\begin{lemma}
Let $\q$ be a forcing poset which has greatest lower bounds, and let 
$\p$ be a regular suborder of $\q$ which satisfies $*(\p,\q)$. 
Then $\p$ forces that for all $r$ and $s$ in $\q / \dot G_\p$, if 
$r$ and $s$ are compatible in $\q$ then $r \wedge s$ is in $\q / \dot G_\p$.
\end{lemma}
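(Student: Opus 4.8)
The plan is to work in a generic extension $V[G]$ by $\p$ and verify the statement there. So fix a $V$-generic filter $G$ on $\p$, and let $r, s \in \q / G$ be compatible in $\q$; since $\q$ has greatest lower bounds, $r \wedge s$ exists. We must show $r \wedge s \in \q / G$, which by Definition 1.2 means that $r \wedge s$ is compatible in $\q$ with every condition in $G$. Fix $x \in G$. Because $r \in \q / G$ and $s \in \q / G$, both $r$ and $s$ are compatible with $x$. Moreover $r$ and $s$ are compatible with each other by hypothesis. Thus $x$, $r$, $s$ are pairwise compatible, with $x \in \p$ and $r, s \in \q$, so the property $*(\p,\q)$ applies directly and yields that $x$ is compatible with $r \wedge s$. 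Since $x \in G$ was arbitrary, $r \wedge s$ is compatible with every element of $G$, hence $r \wedge s \in \q / G$.

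I do not expect any real obstacle here: the whole content is that the hypothesis $*(\p,\q)$ is almost verbatim the conclusion we need, once we unwind what membership in the quotient means. The only points requiring a sentence of care are (a) noting that $r \wedge s$ exists at all, which uses the greatest-lower-bounds assumption on $\q$ and the fact that $r,s$ are assumed compatible, and (b) making sure we quantify correctly — the statement of $*(\p,\q)$ has the element of $\p$ in the distinguished role, and here that role is played by the arbitrary $x \in G \subseteq \p$, while the two $\q$-conditions being met are $r$ and $s$. Everything can be phrased either as a forcing argument over an arbitrary condition $p \in \p$ forcing "$\dot r, \dot s \in \q/\dot G_\p$ and $\dot r, \dot s$ compatible" — passing to a generic containing $p$ — or directly in $V[G]$ as above; the generic-extension formulation is cleanest.

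One small bookkeeping remark worth including: to conclude $r \wedge s \in \q/G$ via Lemma 1.3 one could instead argue at the level of $V$: given $p \in \p$ and $\q$-names (or simply conditions via mixing) $r, s$ with $p \Vdash r, s \in \q/\dot G_\p$ and $p \Vdash r \parallel s$, take $t \le p$ in $\p$ arbitrary; then $t, r, s$ are pairwise compatible in $\q$ (using Lemma 1.7 to see $t \parallel r$ and $t \parallel s$, since $p$ forces $r,s \in \q/\dot G_\p$ and $t \le p$), so $*(\p,\q)$ gives $t \parallel r \wedge s$; as $t \le p$ in $\p$ was arbitrary, Lemma 1.3 gives $p \Vdash r \wedge s \in \q/\dot G_\p$. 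Either route is a few lines; I would present the generic-extension version as the main line and the forcing version is its literal translation.
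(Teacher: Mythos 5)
Your main argument (working in $V[G]$, fixing $x \in G$, and applying $*(\p,\q)$ to the pairwise-compatible triple $x, r, s$) is exactly the paper's proof. The alternative $V$-level formulation you sketch is a faithful translation and adds no new ideas, so the proposal is correct and matches the paper's approach.
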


\begin{proof}
Let $G$ be a $V$-generic filter on $\p$, and 
suppose that $r$ and $s$ are in $\q / G$ and are compatible in $\q$. 
To show that $r \wedge s$ is in $\q / G$, let $p$ be in $G$ and we will show 
that $p$ is compatible with $r \wedge s$. 
Since $r$ and $s$ are in $\q / G$, they are each compatible with $p$. 
By property $*(\p,\q)$, $p$ is compatible with $r \wedge s$.
\end{proof}

The next lemma will be used in the proof of Theorem 2.11, which is our 
main theorem on quotients. 
We will use it again in Section 4 when we prove a variation of the main theorem.

\begin{lemma}
Let $\q$ be a forcing poset which has greatest lower bounds, and let 
$\p$ be a regular suborder of $\q$ satisfying property $*(\p,\q)$. 
Let $\chi$ be a regular cardinal, $N \prec H(\chi)$, $\p, \q \in N$, and suppose that 
$q$ is a strongly $(N,\q)$-generic condition. 
Assume that $z$ is in $\p$ and $z$ forces that $\check q$ is in $\q / \dot G_\p$. 
Then $z$ forces that $N[\dot G_\p] \cap V \subseteq N$ and 
$\check q$ is a strongly 
$(N[\dot G_\p],\q / \dot G_\p)$-generic condition.
\end{lemma}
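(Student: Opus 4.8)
There are two things to prove: first, that $z$ forces $N[\dot G_\p] \cap V \subseteq N$, and second, that $z$ forces $\check q$ to be strongly $(N[\dot G_\p], \q/\dot G_\p)$-generic. These are closely related, and I would handle the first as a warm-up for the second. For the containment $N[\dot G_\p]\cap V \subseteq N$: working in $V[G]$ for a generic $G$ on $\p$ with $z \in G$, a typical element of $N[G]\cap V$ has the form $\dot a^G$ for some $\p$-name $\dot a \in N$ and is, by hypothesis, actually a set in $V$. The standard trick is that the value $\dot a^G$ is decided by some $p \in G$, and since $G$ meets every dense set of $\p$ definable in $N$, elementarity lets us find such a $p$ inside $N$; then $\dot a^G \in N$ because $N$ can compute $\dot a^p$ (the value forced by $p$). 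The point where $q$ enters is that for this to work we need $G\cap N$ to be sufficiently generic over $N$ — this is exactly what strong genericity of $q$ combined with $z \Vdash \check q \in \q/\dot G_\p$ buys us, via Lemma 1.6 and the interaction of $q$ with $\p$.

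\textbf{Main argument for strong genericity of the quotient.} I plan to use the functional characterization from Lemma 2.2(2). Fix (in $V$) a function $r \mapsto r\restriction N$ witnessing that $q$ is strongly $(N,\q)$-generic. I want to show $z$ forces: for every $\q/\dot G_\p$-condition $r \le \check q$, there is an assignment $r \mapsto (r \restriction N[\dot G_\p])$ landing in $N[\dot G_\p]\cap(\q/\dot G_\p)$ such that every extension of it in $N[\dot G_\p]\cap(\q/\dot G_\p)$ is compatible with $r$ in $\q/\dot G_\p$. Given such an $r$ in the extension $V[G]$, I would pass to $V$: $r \le q$ is a condition of $\q$, so $r\restriction N \in N \cap \q$ is defined. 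The natural candidate for $r \restriction N[G]$ is simply $r \restriction N$ itself (it lies in $N \subseteq N[G]$). I then need two things: (i) that $r \restriction N$ is in $\q/G$, and (ii) that any $v \le r\restriction N$ with $v \in N[G]\cap(\q/G)$ is compatible with $r$ \emph{in the quotient}, i.e. the witnessing lower bound also lies in $\q/G$.

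\textbf{Where the hypotheses do the work.} For (i): since $r \in \q/G$ and $r \le r\restriction N$ (here I use Lemma 2.5, that $q$ being strongly $N$-generic with greatest lower bounds makes it simple, so I may assume $r \le r\restriction N$ after shrinking $r$ inside a suitable dense set), Lemma 1.4 gives $r\restriction N \in \q/G$ automatically. For (ii): take $v \le r\restriction N$ in $N[G]\cap(\q/G)$. By the first part of the lemma, $N[G]\cap V \subseteq N$, and $v \in V$, so actually $v \in N \cap \q$ with $v \le r\restriction N$; hence by the defining property of $r \restriction N$, $r$ and $v$ are compatible in $\q$, so $r \wedge v$ exists. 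Now both $r$ and $v$ lie in $\q/G$, so by Lemma 2.10 (which uses $*(\p,\q)$) the meet $r \wedge v$ lies in $\q/G$ as well; this witnesses compatibility of $r$ and $v$ in $\q/\dot G_\p$. This is exactly the point where property $*(\p,\q)$ is indispensable — without it, the greatest lower bound of two quotient conditions need not be a quotient condition, and the functional witness would fail to certify compatibility in the quotient.

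\textbf{Anticipated main obstacle.} The genuinely delicate step is establishing $N[\dot G_\p]\cap V \subseteq N$ forced by $z$, and more precisely ensuring that $G \cap \p$ is generic enough over $N$ that names in $N$ get their values decided by conditions in $N$. The subtlety is that $q$ is strongly $(N,\q)$-generic, not strongly $(N,\p)$-generic; one must transfer genericity from $\q$ down to $\p$ using that $\p$ is a regular suborder and $z$ forces $q$ into the quotient. Concretely: a dense $D \in N$ of $N \cap \p$ is, by regularity (Lemma 1.1 and the suborder axioms), related to a predense set of $N \cap \q$, against which $q$ is generic; combined with $z \Vdash \check q \in \q/\dot G_\p$ and Lemma 1.6, this should force $G$ to meet $D$. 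Getting this transfer precisely right — and checking it is uniform enough to conclude $N[G]\cap V \subseteq N$ rather than merely that individual prescribed names behave — is the crux, and I would spend most of the proof there; the rest is bookkeeping with Lemmas 1.4, 2.5, and 2.10.
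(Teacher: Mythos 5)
Your treatment of the second conclusion (strong genericity in the quotient) is essentially the paper's proof: apply Lemma~2.5 to get a simple assignment $r \mapsto r\restriction N$ on a dense set $E$ with $r \le r\restriction N$, pass to $E \cap (\q/\dot G_\p)$ via Lemma~1.5, reuse $r\restriction N$ as the candidate for $r\restriction N[\dot G_\p]$, obtain $r\restriction N \in \q/\dot G_\p$ from Lemma~1.4, and use the first conclusion together with closure of the quotient under meets to certify compatibility in $\q/\dot G_\p$. One slip: the fact that the meet of two compatible quotient conditions stays in the quotient is Lemma~2.9, not Lemma~2.10 --- Lemma~2.10 is the statement you are proving, so as written the citation is circular.

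For the containment $N[\dot G_\p] \cap V \subseteq N$ you take a genuinely different route than the paper. The paper argues locally by contradiction: given a $\p$-name $\dot a \in N$ that some $z' \le z$ forces into $V \setminus N$, it takes the dense set $D \subseteq \q$ of conditions deciding $\dot a$, uses strong $(N,\q)$-genericity of $q$ against $q \wedge z'$ (which exists because $z$ forces $q$ into the quotient) to find a deciding $v \in D \cap N$ compatible with $q \wedge z'$, and derives a contradiction. You instead want a general transfer: every dense subset $D$ of $N \cap \p$ is predense below $z$ in $\p$, from which the containment follows by the usual name-decidability argument. That transfer is in fact true --- given $z' \le z$ in $\p$, set $D' := \{\, r \in N \cap \q : r \le d \ \text{for some } d \in D \,\}$; by elementarity and Lemma~1.1, $D'$ is dense in $N \cap \q$, hence predense below $q$; since $z' \wedge q$ exists by Lemma~1.3, some $r' \in D'$ is compatible with $z' \wedge q$, hence some $d \in D$ is compatible with $z'$ in $\q$ and, by regularity of $\p$, in $\p$. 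But your proposal leaves exactly this step unproved, saying only that ``this should force $G$ to meet $D$'' and that you ``would spend most of the proof there.'' That is the one real gap: the transfer claim is where all the hypotheses interact, and it must be written out. Once filled in, your route works and yields a reusable statement that the paper's more local contradiction argument does not isolate.
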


\begin{proof}
To show that $z$ forces that $N[\dot G_\p] \cap V \subseteq N$, 
assume for a contradiction that 
$z' \le z$ in $\p$, $\dot a$ is a $\p$-name in $N$, and 
$z'$ forces that $\dot a$ is in $V \setminus N$. 
Since $\dot a^G = \dot a^{G \cap \p}$ 
whenever $G$ is a $V$-generic filter on $\q$, 
it follows that $z'$ forces in $\q$ that $\dot a$ is in $V \setminus N$. 
Let $D$ be the dense set of conditions 
$v \in \q$ such that $v$ decides in $\q$ 
whether $\dot a$ is in $V$, and if it decides that it is, 
it decides the value of $\dot a$. 
Since $\dot a \in N$, it follows by elementarity that 
$D \in N$, and hence $D \cap N$ is dense in $N \cap \q$. 

Since $z$ forces that $\check q$ is in $\q / \dot G_\p$, 
$z'$ is compatible with $q$ in $\q$. 
As $q$ is strongly $(N,\q)$-generic and $q \wedge z' \le q$, 
fix $v \in D \cap N$ which is compatible with $q \wedge z'$. 
As $z'$ forces that $\dot a$ is in $V \setminus N$, 
so does $v \wedge q \wedge z'$. 
Since $v \in D \cap N$, by the elementarity of $N$ 
there is $b \in N$ such that $v$ forces that $\dot a = \check b$. 
But then $v \wedge q \wedge z'$ forces that $\dot a \in N$, which 
is a contradiction.

Now let $G_\p$ be a $V$-generic filter on $\p$ which contains $z$.
We will show that in $V[G_\p]$, $q$ is a strongly 
$(N[G_\p], \q / G_\p)$-generic condition. 
Since $q$ is a strongly $(N,\q)$-generic condition in $V$, 
by Lemma 2.5 fix a set $E \subseteq \q$ in $V$ which is dense below $q$ 
and a function $r \mapsto r \restriction N$ defined on $E$ 
satisfying that for all $r \in E$, $r \restriction N \in N \cap \q$, 
$r \le r \restriction N$, and for all $v \le r \restriction N$ in $N \cap \q$, 
$r$ and $v$ are compatible. 
By Lemma 1.5, $E \cap (\q / G_\p)$ 
is dense below $q$ in $\q / G_\p$.

In $V[G_\p]$ define a map $r \mapsto r \restriction N[G_\p]$ on the set 
$E \cap (\q / G_\p)$ by letting $r \restriction N[G_\p] := r \restriction N$. 
For all $r \in E \cap (\q / G_\p)$, $r \restriction N[G_\p]$ 
is in $N$ and hence in $N[G_\p]$, and 
$r \le r \restriction N = r \restriction N[G_\p]$. 
By Lemma 1.4, $r \restriction N[G_\p]$ is in $\q / G_\p$. 
Consider $v \in N[G_\p] \cap (\q / G_\p)$ below $r \restriction N[G_\p]$. 
Since $N[G_\p] \cap V \subseteq N$ as shown above, 
$v \in N \cap \q$ and $v \le r \restriction N$ in $\q$. 
So $v$ is compatible with $r$ in $\q$. 
As $r$ and $v$ are both in $\q / G_\p$, $r \wedge v$ is in $\q / G_\p$ 
by Lemma 2.9. 
Hence $r$ and $v$ are compatible in $\q / G_\p$.
\end{proof}

\begin{thm}
Let $\q$ be a forcing poset which has greatest lower bounds, and let 
$\p$ be a regular suborder of $\q$ satisfying property $*(\p,\q)$. 
Assume that $\q$ has universal strongly generic conditions 
on a stationary set. 
Then $\p$ forces that $\q / \dot G_\p$ has universal strongly generic 
conditions on a stationary set. 
In particular, $\p$ forces that $\q / \dot G_\p$ 
is strongly proper on a stationary set.
\end{thm}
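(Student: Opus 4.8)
The plan is to fix a sufficiently large regular cardinal $\chi$ and show that $\p$ forces the existence of stationarily many $M \in P_{\omega_1}(H(\chi))$ with $M \prec H(\chi)$ admitting a universal strongly $(M,\q/\dot G_\p)$-generic condition; by the remarks following Definitions 2.1 and 2.7 this yields the theorem, and the final ``in particular'' clause is then immediate, since a universal strongly $M$-generic condition is compatible with every $p \in M \cap (\q/\dot G_\p)$, and a common extension $\le p$ of that condition and $p$ (which lies in $\q/\dot G_\p$ by Lemma 2.9) is again strongly $M$-generic, strong genericity passing to extensions. The witnessing model $M$ will always be $N[\dot G_\p]$, for a suitable $N \prec H(\chi)^V$ drawn from the ground-model stationary set supplied by the hypothesis, and the witnessing condition will be the same universal strongly $N$-generic condition that works in $V$.

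For the main argument, choose $\chi$ large enough that $\p,\q \in H(\chi)$ and $H(\chi)^{V[G_\p]} = H(\chi)^V[G_\p]$ for every $\p$-generic $G_\p$. To establish the stationarity it suffices to fix a $\p$-name $\dot F \in H(\chi)^V$ for a function $[H(\chi)^{V[\dot G_\p]}]^{<\omega} \to H(\chi)^{V[\dot G_\p]}$ and a condition $p_0 \in \p$, and then produce $p_1 \le p_0$ in $\p$ forcing that some $M \prec H(\chi)$ closed under $\dot F^{\dot G_\p}$ has a universal strongly $(M,\q/\dot G_\p)$-generic condition. Projecting the stationary set of the hypothesis from $P_{\omega_1}(H(\lambda_\q))$ up to $P_{\omega_1}(H(\chi))$ and intersecting with a club, I would fix $N \prec H(\chi)$ with $\{\p,\q,\dot F,p_0\} \subseteq N$ such that $N \cap H(\lambda_\q)$ has a universal strongly $(N \cap H(\lambda_\q),\q)$-generic condition $q_N$; by the remark after Definition 2.1 and since $\q \subseteq H(\lambda_\q)$, the same $q_N$ is a universal strongly $(N,\q)$-generic condition. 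Note that $N[\dot G_\p]$ will be countable in $V[G_\p]$ since $N$ is countable.

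Next I would pin down $p_1$. As $q_N$ is universal and $p_0 \in N \cap \q$, the meet $q' := q_N \wedge p_0$ exists. By Lemma 1.1 there is $s \in \p$ all of whose extensions in $\p$ are compatible in $\q$ with $q'$; in particular $s$ is compatible with $p_0$, so by regularity $s$ and $p_0$ are compatible in $\p$, and---as $\p$ need not have greatest lower bounds---I extract some $p_1 \le s,p_0$ in $\p$ rather than forming a meet. Every $t \le p_1$ in $\p$ is then compatible in $\q$ with $q' \le q_N$, so by Lemma 1.3, $p_1 \Vdash_\p q_N \in \q/\dot G_\p$. Applying Lemma 2.10 with $z := p_1$ (legitimate since $\p,\q \in N$ and $q_N$ is strongly $(N,\q)$-generic), we obtain that $p_1$ forces both $N[\dot G_\p] \cap V \subseteq N$ and that $q_N$ is strongly $(N[\dot G_\p],\q/\dot G_\p)$-generic. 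Moreover $p_1$ forces $N[\dot G_\p] \prec H(\chi)^{V[\dot G_\p]}$ by the choice of $\chi$, and $N[\dot G_\p]$ is closed under $\dot F^{\dot G_\p}$ because $\dot F \in N$. It remains to check universality of $q_N$ over $N[\dot G_\p]$ in the quotient: below $p_1$, any $v \in N[\dot G_\p] \cap (\q/\dot G_\p)$ lies in $V$, hence in $N \cap \q$, hence is compatible with $q_N$ in $\q$ by universality in $V$; since $q_N,v \in \q/\dot G_\p$, Lemma 2.9 (this is where $*(\p,\q)$ is used) gives $q_N \wedge v \in \q/\dot G_\p$, so $q_N$ and $v$ are compatible in $\q/\dot G_\p$. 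Thus $p_1$ forces $N[\dot G_\p]$ to be the desired witness.

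Since Lemmas 2.9 and 2.10 already do the essential work, I expect the only genuinely new step to be the extraction of $p_1$ from $q_N$ and $p_0$ via Lemma 1.1---with the minor wrinkle that $\p$ has no greatest lower bounds, so compatibility inside the regular suborder must substitute for a meet---together with the routine but unavoidable bookkeeping: arranging $H(\chi)^{V[G_\p]} = H(\chi)^V[G_\p]$, lifting the ground-model stationary set from $P_{\omega_1}(H(\lambda_\q))$ to $P_{\omega_1}(H(\chi))$, and invoking the ``equivalent property'' remarks after Definitions 2.1 and 2.7 to move freely between $\lambda_\q$ and $\chi$ in both $V$ and $V[G_\p]$.
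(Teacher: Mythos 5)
Your proof is correct and follows essentially the same route as the paper's: pick a universal strongly $(N,\q)$-generic $q_N$ from the stationary set, use Lemma 1.1 plus regularity of $\p$ to find an extension of the given condition in $\p$ that forces $q_N \in \q/\dot G_\p$, apply Lemma 2.10 to transfer strong genericity of $q_N$ to $N[\dot G_\p]$ in the quotient, and close with Lemma 2.9 for universality. The only differences are presentational --- you make explicit the lifting from $H(\lambda_\q)$ to $H(\chi)$ and spell out the ``in particular'' clause, both of which the paper leaves implicit.
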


\begin{proof}
Let $\chi$ be a regular cardinal such that $\q \in H(\chi)$. 
Then $\q$ forces that $\chi$ is regular and 
$\lambda_{\q / \dot G_\p} \le \chi$. 
Let $\dot F$ be a $\p$-name for a function 
$\dot F : (H(\chi)^{V[\dot G_\p]})^{<\omega} \to H(\chi)^{V[\dot G_\p]}$, 
and let $s \in \p$.  
We will find an extension of $s$ in $\p$ which forces that there exists a countable 
set $M \subseteq H(\chi)^{V[\dot G_\p]}$ 
which is closed under $\dot F$ such that there exists a 
universal strongly $(M,\q / \dot G_\p)$-generic condition.

Define $H : H(\chi)^{<\omega} \to H(\chi)$ by letting 
$H(\dot a_0,\ldots,\dot a_n)$ be a $\p$-name in $H(\chi)$ 
which $\p$ forces is equal to $\dot F(\dot a_0,\ldots,\dot a_n)$, for any 
$\p$-names $\dot a_0, \ldots, \dot a_n$ in $H(\chi)$. 
Since $\q$ has universal strongly generic conditions on a stationary set, 
we can fix a countable set $N \prec H(\chi)$ 
such that $\p$, $\q$, and $s$ are in $N$, $N$ is closed under $H$, 
and there is a universal strongly $(N,\q)$-generic condition $q_N$. 

Since $s \in N \cap \q$ and $q_N$ is universal, 
$q_N$ is compatible with $s$. 
As $\p$ is a regular suborder of $\q$, fix $z$ in $\p$ 
such that every extension of $z$ in $\p$ is compatible with 
$q_N \wedge s$ in $\q$. 
By Lemma 1.3, $z$ forces that $q_N \wedge s$ is in $\q / \dot G_\p$. 
By Lemma 1.4, $z$ forces that $q_N$ is in $\q / \dot G_\p$. 
Since $z$ is compatible with $q_N \wedge s$ in $\q$, $z$ is 
compatible with $s$ in $\q$. 
As $\p$ is a regular suborder of $\q$ and $s$ and $z$ are in $\p$, 
$z$ is compatible with $s$ in $\p$. 
Extending $z$ if necessary in $\p$, we may assume without loss 
of generality that $z \le s$. 

Since $N$ is closed under $H$, $\p$ forces that 
$N[\dot G_\p]$ is closed under $\dot F$. 
So it suffices to show that $z$ forces that $q_N$ is a universal 
strongly $(N[\dot G_\p],\q / \dot G_\p)$-generic condition. 
Let $G$ be a $V$-generic filter on $\p$ with $z \in G$. 
By Lemma 2.10 applied to $z$ and $q_N$, we get that 
$N[G] \cap V \subseteq N$ and $q_N$ is a 
strongly $(N[\dot G_\p],\q / \dot G_\p)$-generic condition. 
Finally, if $p \in N[G] \cap (\q / G)$, then $p \in N \cap \q$, so 
$p$ and $q_N$ are compatible in $\q$ by the universality of $q_N$. 
As $p$ and $q_N$ are in $\q / G$, they are compatible in 
$\q / G$ by Lemma 2.9.
\end{proof}

Recall the following definition of Mitchell, which plays a prominent role 
in \cite{mitchell2}. 

\begin{definition}
A forcing poset $\q$ is said to have the 
\emph{$\omega_1$-approximation property} 
if $\q$ forces that whenever $X$ is a subset of $V$ such that for every set $N$ which is countable in $V$, 
$N \cap X$ is in $V$, then $X$ is in $V$.
\end{definition}

In other words, $\q$ has the $\omega_1$-approximation property iff 
$\q$ forces that the pair $(V,V[\dot G_\q])$ has the 
$\omega_1$-approximation property.

\begin{proposition}
Let $\q$ be a forcing poset and assume that 
$\q$ is strongly proper on a stationary set. 
Then $\q$ satisfies the $\omega_1$-approximation property.
\end{proposition}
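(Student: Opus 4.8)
The plan is to show that $\q$ forces the following: if $\dot X$ names a subset of $V$ such that every ground-model countable set has $\q$-approximations of $\dot X$ inside $V$, then $\dot X \in V$. Suppose not; fix a condition $q_0$ and a name $\dot X$ such that $q_0$ forces $\dot X$ is not in $V$ but is $\omega_1$-approximated by $V$. It suffices to reduce to the case where $\dot X$ names a bounded subset of the ordinals, since we may work with a bijection in $V$ between some $H(\mu)$ containing all relevant objects and an ordinal, and recode $\dot X$ as a bounded set of ordinals; the approximation hypothesis is preserved under this recoding because the recoding is in $V$ and countable sets map to countable sets. So assume $\dot X$ names a subset of some ordinal $\delta$.

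Next I would pick a large regular $\chi$ with $\q, \dot X, \delta, q_0 \in H(\chi)$, and use strong properness on a stationary set to fix a countable $N \prec H(\chi)$ containing all these parameters and admitting, for every $p \in N \cap \q$ below $q_0$, a strongly $N$-generic condition below $p$. In particular fix a strongly $N$-generic $q \le q_0$. Let $G$ be $V$-generic over $\q$ with $q \in G$, and let $X := \dot X^G$. The key point is that $X \cap N$ should be computed inside $V$: indeed, because $q$ is strongly $N$-generic, for each ordinal $\alpha \in N \cap \delta$ the set $D_\alpha := \{ p \in N \cap \q : p \text{ decides } \check\alpha \in \dot X \}$ is dense in $N \cap \q$ (by elementarity, as $\dot X, \alpha \in N$) and hence predense below $q$; so in $V$ one can read off, from the Boolean-value data available in $V$, which $\alpha \in N \cap \delta$ are forced into $X$ by conditions in $G \cap N \cap \q$. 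More precisely, $X \cap N \cap \delta = \{\alpha \in N \cap \delta : \exists p \in G \cap N \cap \q,\ p \Vdash \check\alpha \in \dot X\}$, and since $N$ is countable in $V$ and the relation ``$p \Vdash \check\alpha \in \dot X$'' is in $V$, this set equals $N \cap \delta$ intersected with a set of the form obtainable once we know which elements of the countable set $N \cap \q$ lie in $G$. The slick way to package this: $N \cap \q$ is a countable set in $V$, $X$ is $\omega_1$-approximated by $V$, so $X \cap (N \cap \q)$-type data... actually the cleanest route is to apply the approximation hypothesis directly to the countable set $b := N \cap \delta \in V$ (countable in $V$), concluding $X \cap b \in V$, i.e. $X \cap N \cap \delta \in V$.

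Then I would derive a contradiction as follows. Working in $V[G]$, the condition $q$ being strongly $N$-generic means $N[G] \cap V = N$ is not automatic, but we do get that $X \cap N \cap \delta \in V$ by the previous paragraph; call it $Y$. Now I claim $q$ forces $\dot X \cap N \cap \delta = \check Y$ — no, this is not a condition-level statement since $Y$ depended on $G$. Instead: the correct use of strong genericity is that for a dense set of $r \le q$, $r$ has a restriction $r \restriction N \in N \cap \q$ all of whose extensions in $N \cap \q$ are compatible with $r$; and by a pigeonhole/density argument below $q_0$, for each such $r$ the set $Z_r := \{\alpha \in N \cap \delta : (r\restriction N) \Vdash \check\alpha \in \dot X\} \cup \{\alpha : (r \restriction N) \Vdash \check\alpha \notin \dot X,\ \text{nothing}\}$... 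The hard part is organizing exactly this: showing that strong genericity forces $\dot X$ to be "locally" determined by conditions inside $N$, so that some condition in $N$ already decides $\dot X \cap N \cap \delta$ to be a specific set $Y \in N$, and then elementarity of $N \prec H(\chi)$ upgrades "$\exists$ a countable $M$ with $\dot X$ approximated and $\dot X \cap M \notin M$" into a statement $N$ must reflect, contradicting that $N$ itself witnesses it. I expect this reflection/elementarity bookkeeping — pinning down a single element of $N$ that decides $\dot X \cap N \cap \delta$ and then feeding it back through $N \prec H(\chi)$ to contradict non-guessing — to be the main obstacle, with the recoding-to-ordinals step and the density arguments being routine.
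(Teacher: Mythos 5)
Your setup is right and the observation that the approximation hypothesis applied to the countable set $N$ (or $N\cap\delta$) yields ``$q$ forces $N\cap\dot X\in V$'' is exactly the right ingredient, but you stall precisely where the argument is, and the direction you gesture at in the final paragraph is the wrong one: you cannot hope to find a condition \emph{in} $N$ that decides $\dot X\cap N$, because $N$ is not an element of $N$, so no condition in $N\cap\q$ can even refer to $N\cap\dot X$. The trick is to decide it from \emph{outside} $N$.

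Here is the missing step, which is the heart of the paper's proof. Since $q$ forces $N\cap\dot X\in V$, a density argument gives $r\le q$ and a set $Y\in V$ with $r\Vdash N\cap\dot X=\check Y$. Now take $r\restriction N$ from the strong-genericity witness (you may extend $r$ first so that $r\restriction N\le p$, using that $p\in N$). Since $p$ forces $\dot X\notin V$ and $p,\dot X\in N$, elementarity of $N\prec H(\chi)$ gives: for every $s\in N\cap\q$ with $s\le p$ there exist $v,w\le s$ in $N\cap\q$ and some $a\in N$ such that $v\Vdash\check a\in\dot X$ and $w\Vdash\check a\notin\dot X$ (otherwise $s$ would decide $\dot X$ completely and force $\dot X\in V$). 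Apply this to $s=r\restriction N$. By strong genericity both $v$ and $w$ are compatible with $r$; but $r$ has already committed to $N\cap\dot X=\check Y$ and $a\in N$, so $r$ can agree with at most one of $v,w$ about whether $a\in\dot X$ --- contradiction. This ``decide-then-split'' move is what you were missing; your proposed alternative of reflecting the whole statement ``$\exists$ countable $M$ witnessing failure'' back into $N$ via elementarity is vaguer and does not lead anywhere directly, because the witnessing $N$ is not available as a parameter inside $N$.
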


Proposition 2.13 is a special case of \cite[Lemma 6]{mitchell1}. 
The proof goes roughly as follows. 
Suppose for a contradiction that 
a condition $p$ forces that $\dot X$ is a counterexample to the 
approximation property. 
Fix $N$ an elementary substructure 
with $p, \dot X \in N$ and $q \le p$ which is a strongly $N$-generic condition. 
Extend $q$ to $r$ which decides the value of $N \cap \dot X$. 
Then $r \restriction N$ has extensions $v$ and $w$ in $N$ which disagree on whether 
a certain set in $N$ is in $\dot X$. 
Since $r$ decides the value of $N \cap \dot X$, it cannot be compatible 
with both $v$ and $w$, giving a contradiction.

As an immediate consequence of Theorem 2.11 and Proposition 2.13, we 
get the following corollary.

\begin{corollary}
Let $\q$ be a forcing poset which has greatest lower bounds, 
and assume that $\q$ has 
universal strongly generic conditions on a stationary set. 
Let $\p$ be a regular suborder of $\q$ satisfying $*(\p,\q)$. 
Then $\p$ forces that $\q / \dot G_\p$ satisfies the 
$\omega_1$-approximation property.
\end{corollary}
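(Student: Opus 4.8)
The plan is simply to chain together the two results just proved. First I would fix a $V$-generic filter $G$ on $\p$ and pass to the model $V[G]$. By Theorem 2.11, since $\q$ has universal strongly generic conditions on a stationary set and $\p$ is a regular suborder of $\q$ satisfying $*(\p,\q)$, the quotient $\q / G$ is, in $V[G]$, strongly proper on a stationary set.

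Next I would apply Proposition 2.13, but \emph{inside} $V[G]$ rather than in $V$. The key observation is that Proposition 2.13 is a theorem of $\mathsf{ZFC}$: \emph{any} forcing poset which is strongly proper on a stationary set satisfies the $\omega_1$-approximation property, and the argument sketched after its statement uses nothing beyond $\mathsf{ZFC}$. Since $V[G]$ is a model of $\mathsf{ZFC}$ and $\q / G \in V[G]$ is, in $V[G]$, strongly proper on a stationary set, Proposition 2.13 applied in $V[G]$ gives that $\q / G$ has the $\omega_1$-approximation property over $V[G]$; that is, $\q / G$ forces that the pair $(V[G], V[G][H])$ has the $\omega_1$-approximation property, where $H$ is $(\q / G)$-generic over $V[G]$. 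Unwinding Definition 2.12 and the remark following it, this is precisely the statement that $\p$ forces ``$\q / \dot G_\p$ satisfies the $\omega_1$-approximation property.''

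Since $G$ was an arbitrary $V$-generic filter on $\p$, the conclusion holds in every $\p$-extension, so $\p$ forces it. The only point deserving a moment's attention is the relativization of Proposition 2.13 to $V[G]$: one checks that both ``strongly proper on a stationary set'' and ``the $\omega_1$-approximation property'' are properties of a forcing poset expressed entirely within the model in which that poset lives, so there is no absoluteness issue, and the proof of Proposition 2.13 goes through verbatim in $V[G]$. I do not anticipate any genuine obstacle here; the substantive content of the corollary is carried entirely by Theorem 2.11 and Proposition 2.13.
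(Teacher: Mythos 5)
Your proof is correct and is exactly the paper's argument: the paper presents the corollary as ``an immediate consequence of Theorem 2.11 and Proposition 2.13,'' which is precisely the chaining you carry out. Your additional remark about relativizing Proposition 2.13 to $V[G]$ is a reasonable point of care but does not change the substance.
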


\section{Products}

We will show that some of the properties studied in the previous section 
are preserved under products.\footnote{Similar results were obtained previously 
by Friedman \cite[Lemma 3]{friedman2} and Neeman 
\cite[Claim 3.8]{neeman}.}  
This information will be used in our consistency result on 
$\mathsf{ISP(\omega_2)}$.

Note that if $\p$ and $\q$ have greatest lower bounds, then so does $\p \times \q$. 
Namely, $(p_0,q_0) \wedge (p_1,q_1) = (p_0 \wedge p_1,q_0 \wedge q_1)$.

\begin{lemma}
Suppose that $\p$ and $\q$ are forcing posets with greatest lower bounds which 
both satisfy the property that whenever $x$, $y$, and $z$ are pairwise compatible 
conditions, then $x$ is compatible with $y \wedge z$. 
Then $\p \times \q$ satisfies this property as well.
\end{lemma}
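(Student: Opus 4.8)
The plan is to verify the property $*(\p \times \q, \p \times \q)$ directly, working componentwise and invoking the hypothesis separately in each coordinate. Recall what we must show: given pairwise compatible conditions $(x_0,x_1)$, $(y_0,y_1)$, $(z_0,z_1)$ in $\p \times \q$, we need that $(x_0,x_1)$ is compatible with $(y_0,y_1) \wedge (z_0,z_1) = (y_0 \wedge y_1^{\mathrm{?}}, \ldots)$ --- more precisely with $(y_0 \wedge z_0, y_1 \wedge z_1)$, using the componentwise formula for greatest lower bounds in $\p \times \q$ noted just before the lemma.

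First I would unpack compatibility in the product: two conditions $(a_0,a_1)$ and $(b_0,b_1)$ in $\p \times \q$ are compatible if and only if $a_0$ is compatible with $b_0$ in $\p$ and $a_1$ is compatible with $b_1$ in $\q$, since a common lower bound is obtained coordinatewise. Applying this to the three given pairwise-compatible conditions, I get that $x_0, y_0, z_0$ are pairwise compatible in $\p$ and $x_1, y_1, z_1$ are pairwise compatible in $\q$. Now I invoke the assumed property for $\p$ to conclude that $x_0$ is compatible with $y_0 \wedge z_0$ in $\p$, and the assumed property for $\q$ to conclude that $x_1$ is compatible with $y_1 \wedge z_1$ in $\q$. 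Note that $y_0 \wedge z_0$ and $y_1 \wedge z_1$ exist precisely because $y_0, z_0$ and $y_1, z_1$ are respectively compatible.

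Finally, since $x_0$ is compatible with $y_0 \wedge z_0$ and $x_1$ is compatible with $y_1 \wedge z_1$, the characterization of compatibility in the product gives that $(x_0,x_1)$ is compatible with $(y_0 \wedge z_0, y_1 \wedge z_1)$, which by the formula for greatest lower bounds in $\p \times \q$ is exactly $(y_0,y_1) \wedge (z_0,z_1)$. This completes the verification.

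There is really no main obstacle here: the argument is a routine coordinatewise reduction, and the only thing to be careful about is correctly stating the (standard) fact that compatibility in a product poset is equivalent to coordinatewise compatibility, together with the already-noted componentwise formula for meets. I would present the proof in three or four sentences along the lines above.
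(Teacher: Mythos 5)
Your proof is correct and matches what the paper intends: the paper simply states ``The proof is straightforward,'' and the coordinatewise reduction you give — observing that compatibility in $\p \times \q$ is exactly coordinatewise compatibility, invoking the hypothesis in each factor, and using the componentwise formula for meets — is precisely the routine argument the authors have in mind.
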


The proof is straightforward.

\begin{lemma}
Let $\p$ and $\q$ be forcing posets with greatest lower bounds. 
Let $\lambda$ be a cardinal such that $\lambda_\p, \lambda_\q \le \lambda$ and 
let $S$ be a subset of $P_{\omega_1}(H(\lambda))$. 
Suppose that $\p$ and $\q$ 
both have universal strongly generic conditions on $S$. 
Then $\p \times \q$ has universal strongly generic conditions on $S$.
\end{lemma}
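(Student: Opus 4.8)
The plan is to prove the lemma pointwise: for each $N \in S$, if $p_N$ is a universal strongly $(N,\p)$-generic condition and $q_N$ is a universal strongly $(N,\q)$-generic condition, I will show that the pair $(p_N, q_N)$ is a universal strongly $(N, \p \times \q)$-generic condition. First I would record the trivialities that ordered pairs of elements of $H(\lambda)$ again lie in $H(\lambda)$, so that $\p \times \q \subseteq H(\lambda)$ and $\lambda_{\p \times \q} \le \lambda$ (making the conclusion meaningful for $S \subseteq P_{\omega_1}(H(\lambda))$); that compatibility and greatest lower bounds in $\p \times \q$ are computed coordinatewise, the latter as noted just before the lemma; and that the relevant $N$ are closed under pairing and its inverses, so that $(a,b) \in N$ iff $a \in N$ and $b \in N$ (this holds in all intended applications, e.g.\ when the $N \in S$ are elementary in $(H(\lambda),\in)$).

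For the strong genericity of $(p_N, q_N)$ I would work through the restriction-function characterization of Lemma 2.2, rather than argue directly with dense subsets of the product, since a dense subset of $\p \times \q$ need not be a product of dense sets. As $p_N$ is strongly $(N,\p)$-generic, fix a function $r \mapsto r \restriction N$ on the conditions $r \le p_N$ of $\p$ with $r \restriction N \in N \cap \p$ and every $v \le r \restriction N$ in $N \cap \p$ compatible with $r$; fix an analogous function $t \mapsto t \restriction N$ on the conditions $t \le q_N$ of $\q$. A condition of $\p \times \q$ below $(p_N, q_N)$ is exactly a pair $(r,t)$ with $r \le p_N$ and $t \le q_N$, so I can set $(r,t) \restriction N := (r \restriction N, t \restriction N)$. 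By closure of $N$ under pairing this lies in $N \cap (\p \times \q)$; and if $(v,w) \le (r,t) \restriction N$ belongs to $N \cap (\p \times \q)$, then $v \le r \restriction N$ with $v \in N \cap \p$ and $w \le t \restriction N$ with $w \in N \cap \q$, so $r$ is compatible with $v$ in $\p$ and $t$ is compatible with $w$ in $\q$, whence $(r,t)$ is compatible with $(v,w)$ in $\p \times \q$. Lemma 2.2 then gives that $(p_N, q_N)$ is strongly $(N, \p \times \q)$-generic.

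For universality, given $(p,q) \in N \cap (\p \times \q)$ I have $p \in N \cap \p$ and $q \in N \cap \q$, so $p_N$ is compatible with $p$ and $q_N$ is compatible with $q$ by the universality of $p_N$ and $q_N$; hence $(p_N, q_N)$ is compatible with $(p,q)$. This finishes the verification for a fixed $N$, and as it applies to every $N \in S$, the lemma follows.

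I do not anticipate a real obstacle: the whole content is that strong genericity, phrased via restriction functions, together with the coordinatewise character of compatibility and of greatest lower bounds, is inherited by finite products. The only points needing a moment's care are to route the strong-genericity argument through Lemma 2.2 rather than through predensity of dense subsets of the product directly, and to have the models $N$ closed enough that membership of a pair coincides with membership of both coordinates.
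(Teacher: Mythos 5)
Your proof is correct and follows essentially the same route as the paper's: fix a universal strongly generic condition for each coordinate, show the pair is universal by coordinatewise compatibility, and show strong genericity by taking the restriction function to be the coordinatewise pair of restriction functions, exactly as the paper does. The one small refinement you add — noting explicitly that $N$ must be closed under pairing for $(r\restriction N, s\restriction N)$ to land in $N$ — is a point the paper passes over silently with ``clearly in $N \cap (\p\times\q)$,'' and your care there is reasonable but does not constitute a different argument.
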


\begin{proof}
Let $N$ be in $S$. 
Fix a universal 
strongly $(N,\p)$-generic condition $p$ and a universal 
strongly $(N,\q)$-generic condition $q$. 
We claim that $(p,q)$ is a universal strongly $(N,\p \times \q)$-generic condition. 

So let $(u,v)$ be in $N \cap (\p \times \q)$. 
Then by universality, 
$p$ and $u$ are compatible in $\p$, and 
$q$ and $v$ are compatible in $\q$. 
Hence $(p \wedge u,q \wedge v) \le (p,q), (u,v)$. 
So $(p,q)$ and $(u,v)$ are compatible in $\p \times \q$.

Let $r \mapsto r \restriction N$ be a map from conditions $r \le p$ in $\p$ 
to $N \cap \p$ witnessing that $p$ is a 
strongly $(N,\p)$-generic condition. 
Similarly, let $s \mapsto s \restriction N$ be a map from conditions $s \le q$ in $\q$ 
to $N \cap \q$ 
witnessing that 
$q$ is a strongly $(N,\q)$-generic condition.

For $(r,s) \le (p,q)$ in $\p \times \q$, 
define $(r,s) \restriction N = 
(r \restriction N, s \restriction N)$, which is clearly 
in $N \cap (\p \times \q)$. 
Assume that $(y,z) \le (r \restriction N,s \restriction N)$ 
is in $N \cap (\p \times \q)$. 
Then $y$ is compatible with $r$ in $\p$ and $z$ is compatible 
with $s$ in $\q$. 
So $(r \wedge y,s \wedge z)$ is in $\p \times \q$ and is below 
$(r,s)$ and $(y,z)$. 
So every extension of $(r,s) \restriction N$ in $N \cap (\p \times \q)$ 
is compatible with $(r,s)$.
\end{proof}

A similar argument shows that if $\p$ and $\q$ are strongly proper, 
then so is $\p \times \q$. 
In other words, strong properness is productive. 
This is in contrast to properness, which is not productive; 
see \cite[Chapter XVII, 2.12]{shelah}.

\section{Variations}

We consider two variations of Theorem 2.11 on quotients of strongly proper forcings. 
First, we discuss factoring a forcing poset over an 
elementary substructure below a condition. 
Secondly, we introduce a weakening of strongly proper which is called 
non-diagonally strongly proper, and show that this property is sometimes 
preserved under taking quotients.

\begin{definition}
Let $\p$ and $\q$ be forcing posets. 
A function $f : \p \to \q$ is a \emph{regular embedding} if:
\begin{enumerate}
\item for all $p$ and $q$ in $\p$, $q \le p$ implies $f(q) \le f(p)$;
\item for all $p$ and $q$ in $\p$, if $f(p)$ and $f(q)$ are compatible in $\q$, 
then $p$ and $q$ are compatible in $\p$;
\item if $A$ is a maximal antichain of $\p$, then $f[A]$ is a maximal 
antichain of $\q$.
\end{enumerate}
\end{definition}

It is straightforward to show that 
if $f : \p \to \q$ is a regular embedding, then 
$f[\p]$ is a regular suborder of $\q$.

Previously we have focused on strongly generic conditions for countable models. 
The next lemma and theorem are useful when the model under 
consideration is uncountable.

Recall that $\p / q = \{ r \in \p : r \le q \}$, where $\p$ is a forcing poset 
and $q \in \p$.

\begin{lemma}
Let $\p$ be a forcing poset with greatest lower bounds,  
$\chi \ge \lambda_\p$ a regular cardinal, and 
$N \prec (H(\chi),\in,\p)$. 
Suppose that $q$ is a universal strongly $N$-generic condition. 
Applying the universality of $q$, define a function 
$f : (N \cap \p) \to (\p / q)$ by $f(p) = q \wedge p$.

Then $f$ is a regular embedding. 
Moreover, for any $V$-generic filter $G$ on $N \cap \p$, 
for all $r \in \p / q$, $r \in (\p / q) / f[G]$ iff 
$r$ is compatible with every condition in $G$. 
\end{lemma}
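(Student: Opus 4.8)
We must establish two things about the function $f : (N \cap \p) \to (\p / q)$ defined by $f(p) = q \wedge p$: first, that $f$ is a regular embedding in the sense of Definition 4.1; and second, the characterization of $(\p/q)/f[G]$ as the set of $r \le q$ that are compatible with every condition in $G$. Note first that $f$ is well-defined: since $q$ is universal, every $p \in N \cap \p$ is compatible with $q$, so $q \wedge p$ exists by the greatest-lower-bounds assumption, and clearly $q \wedge p \le q$, so $q \wedge p \in \p/q$.

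For the regular embedding part, I would check the three clauses of Definition 4.1 in order. Clause (1), monotonicity, is immediate: if $p' \le p$ in $N \cap \p$ then $q \wedge p' \le q \wedge p$ since $q \wedge p'$ is a lower bound of $q$ and $p$. Clause (2), that compatibility of $f(p)$ and $f(p')$ in $\p/q$ implies compatibility of $p$ and $p'$ in $N \cap \p$: if $q \wedge p$ and $q \wedge p'$ have a common extension $r$, then $r \le p, p'$, so $p$ and $p'$ are compatible in $\p$; but then $p \wedge p'$ exists and lies in $N$ by elementarity (both $p, p'$ are in $N$), so $p$ and $p'$ are compatible in $N \cap \p$ as witnessed by $p \wedge p'$. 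Clause (3) is the substantive one: given a maximal antichain $A$ of $N \cap \p$, we must show $f[A] = \{q \wedge p : p \in A\}$ is a maximal antichain of $\p/q$, equivalently predense below $q$. That $f[A]$ is an antichain follows from clause (2). For predensity below $q$, fix $r \le q$; since $q$ is strongly $N$-generic, by Lemma 2.2(2) (or Lemma 2.5) we may assume $r \in E$ and has a restriction $r \restriction N \in N \cap \p$ with $r \le r \restriction N$ and all extensions of $r \restriction N$ in $N \cap \p$ compatible with $r$ — actually the cleanest route is: the set $A$ is predense in $N \cap \p$, hence (being a \emph{maximal} antichain) $A$ itself is dense-below... more precisely, since $A$ is a maximal antichain in $N \cap \p$, the downward closure of $A$ in $N \cap \p$ is dense in $N \cap \p$, so there is $p \in A$ and $v \le p$ in $N \cap \p$ with $v$ compatible with $r$; a common extension of $v$ and $r$ then extends both $r$ and $q \wedge p$ (since it is below $v \le p$ and below $r \le q$), witnessing that $r$ is compatible with $q \wedge p \in f[A]$. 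So $f[A]$ is predense below $q$, i.e. a maximal antichain of $\p/q$.

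For the "moreover" clause, let $G$ be $V$-generic on $N \cap \p$ and fix $r \in \p/q$. By Definition 1.2 applied to the regular suborder $f[N \cap \p]$ of $\p/q$, we have $r \in (\p/q)/f[G]$ iff $r$ is compatible in $\p/q$ with every element of $f[G] = \{q \wedge p : p \in G\}$. One direction is easy: if $r$ is compatible with every $q \wedge p$ for $p \in G$, then since $q \wedge p \le p$, $r$ is compatible with every $p \in G$. Conversely, suppose $r \le q$ is compatible in $\p$ with every $p \in G$; we want $r$ compatible with $q \wedge p$ for each $p \in G$. Here I expect to use the compatibility property, but wait — $\p$ need not satisfy $*(\p,\p)$ in this lemma. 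Instead use strong genericity directly: fix $p \in G$, so $r$ and $p$ are compatible, hence $r \wedge p$ exists; since $r \le q$, we get $r \wedge p \le q$ and $r \wedge p \le p$, so $r \wedge p \le q \wedge p$, showing $r$ and $q \wedge p$ are compatible (with witness $r \wedge p$). That compatibility in $\p$ is compatibility in $\p/q$ follows since the common extension $r \wedge p$ is $\le q$, hence in $\p/q$. This completes the equivalence.

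The main obstacle, to my mind, is clause (3) of the regular embedding — translating the maximality of $A$ as an antichain of $N \cap \p$ into predensity of $f[A]$ below $q$ in the correct way; the strong genericity of $q$ is exactly what bridges the gap, and the argument above (find $v \le p \in A$ in $N$ compatible with a given $r \le q$, then take a common lower bound) is the heart of it. Everything else is routine manipulation of greatest lower bounds. I would also remark, for cleanliness, that universality of $q$ is used only to guarantee $f$ is total (every $p \in N \cap \p$ is compatible with $q$), while strong genericity drives clause (3).
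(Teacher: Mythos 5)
Your proof is correct and follows essentially the same route as the paper's: the downward closure $D$ of $A$ in $N \cap \p$ is exactly the dense set the paper uses, strong genericity of $q$ turns its density in $N \cap \p$ into predensity below $q$, and your ``moreover'' argument (taking $t := r \wedge p \le q$ to witness compatibility of $r$ with $q \wedge p$) is the paper's argument verbatim. One small tightening: in clause (3) the phrase ``so there is $p \in A$ and $v \le p$ in $N \cap \p$ with $v$ compatible with $r$'' should explicitly invoke strong genericity of $q$ as the step that makes $D$ predense below $q$, since density of $D$ in $N \cap \p$ alone does not yield compatibility with an arbitrary $r \le q$ --- you clearly know this (you say so in your closing remarks), but the sentence as written reads as a non-sequitur.
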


\begin{proof}
We prove first that $f$ is a regular embedding. 
If $t \le s$ in $N \cap \p$, then easily $q \wedge t \le q \wedge s$. 
Now let $s$ and $t$ be in $N \cap \p$ and assume that 
$w \le q \wedge s, q \wedge t$. 
Then $w \le s, t$. 
By the elementarity of $N$, there is a condition $w'$ in $N \cap \p$ such that 
$w' \le s, t$.

Now let $A$ be a maximal antichain of $N \cap \p$, and we will show that 
$f[A]$ is predense below $q$. 
Since $f$ preserves incompatibility, 
it follows that $f[A]$ is a maximal antichain of $\p / q$. 
Let $D$ be the set of $p \in N \cap \p$ for which there is $s \in A$ 
such that $p \le s$. 
Then $D$ is dense in $N \cap \p$. 
Since $q$ is strongly $N$-generic, $D$ is predense below $q$. 
Consider $r \in \p / q$. 
Then there is $u \in D$ and $t$ such that 
$t \le r, u$. 
By the definition of $D$, there is $s \in A$ such that $u \le s$. 
Then $t \le r, s$. 
As $t \le q$, we have that $t = q \wedge t \le q \wedge s = f(s)$. 
Hence $r$ is compatible with $f(s)$, and $f(s) \in f[A]$.

Let $G$ be a $V$-generic filter on $N \cap \p$. 
Let $r \le q$ be given. 
Assume that $r \in (\p / q) / f[G]$, which means that $r$ is compatible with 
every condition in $f[G]$. 
Consider $s \in G$. 
Fix $t \le r, f(s)$. 
Since $f(s) = q \wedge s \le s$, it follows that $t \le r, s$. 
So $r$ and $s$ are compatible. 
Conversely, assume that $r \le q$ and $r$ is compatible 
with every condition in $G$. 
Let $f(s)$ in $f[G]$ be given, where $s \in G$. 
Since $s \in G$, $r$ and $s$ are compatible, so fix $t \le r, s$. 
Then $t \le q$, so $t \le q \wedge s = f(s)$. 
Hence $r$ and $f(s)$ are compatible.
\end{proof}

For a $V$-generic filter $G$ on $N \cap \p$, 
let $(\p / q) / G$ denote the forcing poset consisting of conditions in 
$\p / q$ which are compatible with every condition in $G$. 
By the lemma, 
$$
(\p / q) / G = (\p / q) / f[G].
$$
Since $N \cap \p$ is forcing equivalent to $f[N \cap \p]$, 
it follows that $\p / q$ is forcing equivalent to the two-step iteration 
$$
(N \cap \p) * ((\p / q) / \dot G_{N \cap \p}).
$$
If $H$ is a $V$-generic filter on $\p$ which contains $q$, then 
by Lemma 1.6(1), $H \cap f[N \cap \p]$ 
is a $V$-generic filter on $f[N \cap \p]$. 
Since $f$ is a dense embedding, 
$f^{-1}[H \cap f[N \cap \p]]$ is a $V$-generic filter on $N \cap \p$. 
But this latter set is just equal to $H \cap N$. 
For if $f(p) = q \wedge p$ is in $H$, then since $q \wedge p \le p$, 
$p$ is in $H$. 
And if $p \in H \cap N$, then since $q \in H$, $f(p) = q \wedge p \in H$. 
By Lemma 1.6(1), 
it follows that $H \cap N$ is a $V$-generic filter on $N \cap \p$, 
$H_q := \{ r \in H : r \le q \}$ is a $V[H \cap N]$-generic filter on 
$(\p / q) / (H \cap N)$, and 
$V[H] = V[H \cap N][H_q]$.

\begin{thm}
Let $\p$ be a forcing poset with greatest lower bounds satisfying 
$*(\p,\p)$, $\chi \ge \lambda_\p$ a regular cardinal, and 
$N \prec (H(\chi),\in,\p)$. 
Suppose that $\p$ has universal strongly generic 
conditions on a stationary set, and $q$ is a universal strongly $N$-generic 
condition.

Then the forcing poset $N \cap \p$ forces that the quotient 
$(\p / q) / \dot G_{N \cap \p}$ has universal strongly generic 
conditions on a stationary set. 
Moreover, whenever $H$ is a $V$-generic filter on $\p$ which contains $q$, 
we have that $V[H] = V[H \cap N][K]$, where $H \cap N$ is a $V$-generic filter 
on $N \cap \p$, $K$ is a $V[H \cap N]$-generic filter on 
$(\p / q) / (H \cap N)$, and the pair $(V[H \cap N],V[H])$ 
satisfies the $\omega_1$-approximation property.
\end{thm}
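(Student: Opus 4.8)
The plan is to deduce the theorem from the main quotient theorem, Theorem~2.11, applied to the poset $\p/q$ and the regular suborder $f[N\cap\p]$ produced by Lemma~4.2, together with Proposition~2.13 and the forcing equivalences recorded in the discussion following Lemma~4.2. First I would assemble the structural hypotheses needed to run Theorem~2.11 with $\q$ there replaced by $\p/q$. Since $\p/q$ is a downward closed subset of $\p$, compatibility and greatest lower bounds in $\p/q$ coincide with those in $\p$, so $\p/q$ has greatest lower bounds; by Lemma~4.2 the map $f$ is a regular embedding, so $f[N\cap\p]$ is a regular suborder of $\p/q$; and $*(\p,\p)$ restricts to $*(\p/q,\p/q)$, because if $x,y,z\in\p/q$ are pairwise compatible then a common lower bound of $x$ and $y\wedge z$ exists in $\p$ and, like $y\wedge z$ itself, lies below $q$ hence in $\p/q$. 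In particular $*(f[N\cap\p],\p/q)$ holds.

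The substantive step is to check that $\p/q$ has universal strongly generic conditions on a stationary set. Fix a large regular $\theta$. The set of countable $M\prec(H(\theta),\in,\p,q)$ which contain $q$ and for which $M\cap H(\lambda_\p)$ lies in a fixed stationary set witnessing that $\p$ has universal strongly generic conditions is stationary in $P_{\omega_1}(H(\theta))$; for such $M$ fix a universal strongly $(M,\p)$-generic condition $p_M$. Because $q\in M\cap\p$, universality of $p_M$ gives that $p_M$ and $q$ are compatible, and I set $q_M := p_M\wedge q\in\p/q$. I claim $q_M$ is a universal strongly $(M,\p/q)$-generic condition. For universality, if $u\in M\cap(\p/q)$ then $u\le q$ and $p_M$ is compatible with $u$, so $p_M\wedge u$ exists and $p_M\wedge u\le u$ and $p_M\wedge u\le p_M\wedge q=q_M$. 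For strong genericity, use Lemma~2.5 to fix a set $E$ dense below $p_M$ and a map $r\mapsto r\restriction M$ on $E$ with $r\restriction M\in M\cap\p$, $r\le r\restriction M$, and every extension of $r\restriction M$ in $M\cap\p$ compatible with $r$; then $E':=\{r\in E:r\le q\}$ is dense below $q_M$, and for $r\in E'$ the condition $q\wedge(r\restriction M)$ exists (witnessed by $r$), lies in $M\cap(\p/q)$ by elementarity, satisfies $r\le q\wedge(r\restriction M)$, and every $v\le q\wedge(r\restriction M)$ in $M\cap(\p/q)$ satisfies $v\le r\restriction M$, hence is compatible with $r$ in $\p$, hence (their meet lies below $q$) compatible with $r$ in $\p/q$. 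Thus $q_M$ is a simple, and so strongly, $(M,\p/q)$-generic condition, and $\p/q$ has universal strongly generic conditions on this stationary set.

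Given these facts, Theorem~2.11 applied to $\p/q$ and the regular suborder $f[N\cap\p]$ gives that $f[N\cap\p]$ forces $(\p/q)/\dot G_{f[N\cap\p]}$ to have universal strongly generic conditions on a stationary set, and hence to be strongly proper on a stationary set. By Lemma~4.2 we have $(\p/q)/G=(\p/q)/f[G]$ for $V$-generic $G$ on $N\cap\p$, and $N\cap\p$ is forcing equivalent to $f[N\cap\p]$, so translating along this equivalence yields the first conclusion: $N\cap\p$ forces that $(\p/q)/\dot G_{N\cap\p}$ has universal strongly generic conditions on a stationary set. For the moreover clause, the discussion following Lemma~4.2 already establishes that whenever $H$ is $V$-generic on $\p$ with $q\in H$, then $H\cap N$ is $V$-generic on $N\cap\p$, the set $H_q:=\{r\in H:r\le q\}$ is $V[H\cap N]$-generic on $(\p/q)/(H\cap N)$, and $V[H]=V[H\cap N][H_q]$. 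Putting $K:=H_q$, the first conclusion says that in $V[H\cap N]$ the forcing $(\p/q)/(H\cap N)$ is strongly proper on a stationary set, so by Proposition~2.13 it has the $\omega_1$-approximation property; equivalently, by the remark after Definition~2.12, the pair $(V[H\cap N],V[H\cap N][H_q])=(V[H\cap N],V[H])$ has the $\omega_1$-approximation property.

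The one point that is not pure bookkeeping is the second paragraph: confirming that ``meeting with $q$'' sends universal strongly $(M,\p)$-generic conditions to universal strongly $(M,\p/q)$-generic conditions. The delicate part there is that the restriction map for $q_M$ cannot simply reuse $r\restriction M$ (which need not lie below $q$); one must post-compose it with meeting against $q$, and then re-verify the compatibility clause in $\p/q$. Everything else reduces to Theorem~2.11, Proposition~2.13, and the quotient bookkeeping already done around Lemma~4.2.
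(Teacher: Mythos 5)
Your proof is correct and follows the same route as the paper: apply Theorem 2.11 to $\p/q$ with the regular suborder $f[N\cap\p]$ produced by Lemma 4.2, then combine with Proposition 2.13 and the quotient bookkeeping laid out in the discussion preceding the theorem. The only real difference is that you explicitly verify the hypothesis of Theorem 2.11 that $\p/q$ itself has universal strongly generic conditions on a stationary set (via the condition $q_M:=p_M\wedge q$ and the modified restriction map $r\mapsto q\wedge(r\restriction M)$), a step the paper's one-paragraph proof passes over silently while asserting that it suffices to check $*(f[N\cap\p],\p/q)$; your verification of this point is correct and fills a genuine, if routine, gap in the published argument.
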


\begin{proof}
We claim that the forcing poset $N \cap \p$ 
forces that the quotient 
$(\p / q) / \dot G_{N \cap \p}$ has universal strongly generic 
conditions on a stationary set. 
In particular, $N \cap \p$ forces that 
$(\p / q) / \dot G_{N \cap \p}$ is strongly proper on a stationary set, and 
hence has the $\omega_1$-approximation property. 

Let $f : N \cap \p \to \p / q$ be the function $f(p) = q \wedge p$. 
By Theorem 2.11, it suffices to show that the regular 
suborder $f[N \cap \p]$ of $\p / q$ satisfies property 
$*(f[N \cap \p],\p / q)$. 
But this follows immediately from the fact that $\p$ satisfies $*(\p,\p)$.

The second conclusion of the theorem follows from this claim together with the 
analysis given prior to the statement of the theorem.
\end{proof}

An example of factoring a forcing poset over an uncountable elementary 
substructure appears in the final argument of Mitchell's theorem on the 
approachability ideal in \cite[Section 3]{mitchell2}. 
Mitchell uses the existence of what he calls tidy strongly generic conditions 
to show that the quotient has the $\omega_1$-approximation property. 
Theorem 4.3 provides a different justification for Mitchell's argument which 
avoids tidy conditions.

\bigskip

For our second variation of Theorem 2.11, we introduce a weakening of strong properness which is useful in situations where we desire quotients to 
have the $\omega_1$-approximation property but 
universal conditions do not 
exist.

\begin{definition}
A forcing poset $\q$ is \emph{non-diagonally strongly proper} if for every 
condition $p$ in $\q$, there are stationarily many 
$N$ in $P_{\omega_1}(H(\lambda_\q))$ such that 
$p \in N$ and there exists $q \le p$ such that $q$ is a 
strongly $N$-generic condition.
\end{definition}

As usual, if we replace $\lambda_\q$ with any cardinal $\chi \ge \lambda_\q$ 
in the definition we get an equivalent statement.

\begin{proposition}
Let $\q$ be a non-diagonally strongly proper forcing poset. 
Then $\q$ satisfies the $\omega_1$-approximation property.
\end{proposition}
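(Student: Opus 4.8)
The plan is to run the proof of Proposition 2.13 essentially verbatim, the only change being the way the relevant countable model is obtained. So I would begin by supposing, toward a contradiction, that some condition $p \in \q$ forces that $\dot X$ is a counterexample to the $\omega_1$-approximation property. As usual it suffices to treat the case where $\dot X$ is a $\q$-name for a bounded subset of the ordinals, say $p$ forces $\dot X \subseteq \check\delta$; picking the name appropriately, $\dot X$ lies in $H(\chi)$ for some regular cardinal $\chi \ge \lambda_\q$, and by the remark following Definition 4.4 non-diagonal strong properness holds with $\chi$ in place of $\lambda_\q$. Applying this to the condition $p$ and intersecting the resulting stationary set with the club of countable $N \prec (H(\chi),\in)$ with $\dot X \in N$, I would fix such an $N$ — so $p, \dot X \in N$ — together with a strongly $(N,\q)$-generic condition $q \le p$ and the restriction function $r \mapsto r \restriction N$ on conditions below $q$ given by Lemma 2.2(2).

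The contradiction is then derived exactly as in the sketch accompanying Proposition 2.13. Since $p$ forces that $\dot X \cap b \in V$ for every $b \in V$ that is countable in $V$, and $N \cap \delta$ is such a set, I would extend $q$ to a condition $r$ which decides the value of $\dot X \cap \check{(N \cap \delta)}$, i.e.\ decides for each $\xi \in N \cap \delta$ whether $\xi \in \dot X$. On the other hand $r \restriction N \in N \cap \q$ is compatible with $p$ (witnessed by $r$), and $p$ forces $\dot X \notin V$, so $r \restriction N$ does not decide $\dot X$; hence by elementarity of $N$ there are $v, w \le r \restriction N$ in $N \cap \q$ and an ordinal $\xi \in N$, necessarily below $\delta$, with $v \Vdash \xi \in \dot X$ and $w \Vdash \xi \notin \dot X$. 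Since $\xi \in N \cap \delta$, the condition $r$ decides whether $\xi \in \dot X$, hence is incompatible with one of $v$ and $w$; but both are extensions of $r \restriction N$, so both are compatible with $r$ by the defining property of the restriction function, a contradiction.

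There is no real obstacle here beyond Proposition 2.13 itself, and the single point that needs care is purely one of bookkeeping: because non-diagonal strong properness only furnishes good models $N$ relative to a prescribed condition — rather than a fixed stationary set of models that works for every condition — the argument must be arranged so that the condition $p$ forcing the putative counterexample is supplied to the hypothesis first, with the name $\dot X$ then captured afterwards by enlarging to a suitable $H(\chi)$ and intersecting with a club of elementary submodels. Once the quantifiers are ordered this way, the weakening from ``strongly proper on a stationary set'' to ``non-diagonally strongly proper'' costs nothing in the argument.
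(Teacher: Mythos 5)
Your proposal is correct and matches the paper's intended argument: the paper's proof of Proposition 4.5 simply refers back to the sketch following Proposition 2.13, and your write-up is a faithful elaboration of that sketch, with the one genuine adjustment being exactly the one you single out — fixing the offending condition $p$ first so that non-diagonal strong properness can supply a stationary set of suitable models, and only then intersecting with the club of $N$ capturing $\dot X$.
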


The proof is the same as the one sketched after Proposition 2.13.

\begin{thm}
Suppose that $\q$ is a forcing poset with greatest lower bounds satisfying that 
for every condition $p$ in $\q$, there are stationarily many 
$N$ in $P_{\omega_1}(H(\lambda_\q))$ such that 
$p \in N$ and there exists a strongly $N$-generic condition 
extending $p$. 
Let $\p$ be a regular suborder of $\q$ satisfying property $*(\p,\q)$. 
Then $\p$ forces that $\q / \dot G_\p$ is non-diagonally strongly proper. 
In particular, $\p$ forces that $\q / \dot G_\p$ has the 
$\omega_1$-approximation property.
\end{thm}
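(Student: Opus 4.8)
The plan is to imitate the proof of Theorem~2.11, replacing the stationary-set-of-universal-conditions hypothesis with the $p$-by-$p$ non-diagonal hypothesis, and dropping the universality clause throughout. First I would unwind what must be shown: working in $V[G_\p]$, for each condition $\bar q \in \q/G_\p$ I must produce stationarily many countable $M \prec H(\chi)^{V[G_\p]}$ with $\bar q \in M$ admitting a strongly $(M, \q/G_\p)$-generic condition below $\bar q$. So fix in $V$ a condition $s \in \p$ and a $\p$-name $\dot q$ for a condition in $\q/\dot G_\p$, together with a $\p$-name $\dot F$ for a function on $(H(\chi)^{V[\dot G_\p]})^{<\omega}$; it suffices to find $z \le s$ in $\p$ forcing that some countable $M \ni \dot q$ closed under $\dot F$ carries a strongly $(M, \q/\dot G_\p)$-generic condition extending $\dot q$.

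The key steps, in order. (1) As in Theorem~2.11, convert $\dot F$ into a function $H : H(\chi)^{<\omega} \to H(\chi)$ in $V$ by choosing $\p$-names for the values. (2) Since $s \Vdash \dot q \in \q/\dot G_\p$, strengthen $s$ to decide $\dot q = \check{q_0}$ for a fixed $q_0 \in \q$ with $s$ compatible with $q_0$ (Lemma~1.8, applied contrapositively: $s$ forcing $q_0 \in \q/\dot G_\p$ makes $s, q_0$ compatible); set $p_0 := s \wedge q_0$, a condition in $\q$. (3) Apply the non-diagonal hypothesis to the condition $p_0 \in \q$: there are stationarily many countable $N \prec H(\chi)$ with $p_0 \in N$ — hence $s, q_0 \in N$ — closed under $H$ (and containing $\p, \q$), and admitting a strongly $(N,\q)$-generic $q_N \le p_0 \le s, q_0$. (4) Since $q_N \le s$, fix via regularity of $\p$ a condition $z \in \p$ all of whose $\p$-extensions are compatible with $q_N$; by Lemma~1.3, $z \Vdash q_N \in \q/\dot G_\p$, and since $q_N \le s$ and $q_N$ is compatible with $z$ in $\q$ hence (regularity) $s$ and $z$ are compatible in $\p$, so shrinking $z$ we may take $z \le s$. (5) Invoke Lemma~2.10 with this $z$ and $q_N$: it gives that $z$ forces $N[\dot G_\p] \cap V \subseteq N$ and that $q_N$ is strongly $(N[\dot G_\p], \q/\dot G_\p)$-generic. (6) Conclude: $N$ closed under $H$ gives $N[\dot G_\p]$ closed under $\dot F$; $q_N \le q_0$ and $z \Vdash \dot q = \check{q_0}$ give $q_N \le \dot q$ below $z$; and $N[\dot G_\p] \ni \dot q$ since $q_0 \in N$. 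So $z$ witnesses the required instance, and $z \le s$. This shows $\q/\dot G_\p$ is forced to be non-diagonally strongly proper; Proposition~4.6 then yields the $\omega_1$-approximation property.

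The main obstacle — and the reason this is a genuine variation rather than a line-by-line copy — is step (3)/(5): in Theorem~2.11 the \emph{same} $N$ worked for every $\dot q$ because universality made $q_N$ compatible with all of $N \cap \q$, so one never needed $\dot q$-specific information inside $N$. Here, by contrast, I must arrange that the relevant condition $q_0$ decided by $s$ lies in $N$, which is exactly why the non-diagonal hypothesis (stationarily many $N$ \emph{containing a prescribed condition}) is the right strengthening of the plain stationary hypothesis: it lets me feed $p_0 = s \wedge q_0$ into the model. One should double-check that Lemma~2.10's proof of "$z \Vdash N[\dot G_\p] \cap V \subseteq N$" and of the strong-genericity transfer used nothing about universality of $q_N$ — inspection shows it only used that $q_N$ is strongly $(N,\q)$-generic and that $z$ forces $q_N \in \q/\dot G_\p$, both of which we have — so Lemma~2.10 applies verbatim. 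The rest is routine bookkeeping identical to Theorem~2.11, and the final sentence is immediate from Proposition~4.6.
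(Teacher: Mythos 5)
Your proof is correct and takes essentially the same route as the paper: strengthen $s$ to decide $\dot p$ as a specific condition $p$, verify $p$ and $s$ are compatible, feed $p\wedge s$ into the non-diagonal hypothesis to get $N$ containing all the relevant parameters and a strongly $(N,\q)$-generic $q\le p\wedge s$, use regularity of $\p$ to find $z\le s$ forcing $q\in\q/\dot G_\p$, and invoke Lemma~2.10 to transfer strong genericity to $N[\dot G_\p]$. Two small inaccuracies worth noting: (i) in your step~(3), $p_0\in N$ does \emph{not} by itself imply $s,q_0\in N$ (you must add $s$ and $q_0$ to the list of parameters you demand $N$ contain, which is harmless since you may intersect the stationary set with any club); and (ii) the compatibility of $s$ and $q_0$ in step~(2) follows from Lemma~1.3 (take $t=s$), or from the paper's direct observation that $s\Vdash \check q_0\in\q/\dot G_\p$ forces $q_0$ compatible with every condition in $\dot G_\p$, in particular with $s$ --- not from the contrapositive of Lemma~1.7, which gives only the weaker implication. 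Neither issue affects the substance of the argument, which matches the paper's.
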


\begin{proof}
Let $\chi$ be a regular cardinal such that $\q \in H(\chi)$. 
Then $\q$ forces that $\chi$ is regular and 
$\lambda_{\q / \dot G_\p} \le \chi$. 
Let $\dot F$ be a $\p$-name for a function 
$\dot F : (H(\chi)^{V[\dot G_\p]})^{<\omega} \to 
H(\chi)^{V[\dot G_\p]}$, let $s \in \p$, 
and suppose that $s$ forces that $\dot p$ is a condition in $\q / \dot G_\p$. 
We will find an extension of $s$ in $\p$ which forces that there exists a countable 
set $M \subseteq H(\chi)^{V[\dot G_\p]}$ 
which is closed under $\dot F$, contains $\dot p$, and there is a 
strongly $(M,\q / \dot G_\p)$-generic condition below $\dot p$.

Extending $s$ if necessary, assume that for some $p \in \q$, $s$ forces 
that $\dot p = \check p$. 
Since $s$ forces that $\dot p$ is in $\q / \dot G_\p$, $s$ forces that 
$\dot p$ is compatible with every condition in $\dot G_\p$. 
In particular, $s$ forces that $\dot p$ is compatible with $s$. 
So $p$ and $s$ are compatible.

Let $H : H(\chi)^{<\omega} \to H(\chi)$ be a function such that $\p$
forces that $H(\dot a_0,\ldots,\dot a_n) = \dot F(\dot a_0,\ldots,\dot a_n)$ for all 
$\p$-names $\dot a_0, \ldots, \dot a_n$ in $H(\chi)$. 
Since $\q$ is non-diagonally strongly proper, fix a countable set 
$N \prec H(\chi)$ 
closed under $H$ such that $\p$, $\q$, $s$, and $p$ are in $N$ 
and there is $q \le p \wedge s$ which is a strongly $(N,\q)$-generic condition.

As $\p$ is a regular suborder of $\q$, fix $z$ in $\p$ such that 
every extension of $z$ in $\p$ is compatible with $q$. 
Then $z$ forces that $q$ is in $\q / \dot G_\p$. 
Since $z$ is compatible with $q$ in $\q$ 
and $q \le s$, $z$ is compatible with $s$ in $\q$. 
Since $\p$ is a regular suborder and $z$ and $s$ are in $\p$, 
$z$ and $s$ are compatible in $\p$. 
So without loss of generality assume that $z \le s$.

Since $N$ is closed under $H$, $\p$ forces that $N[\dot G_\p]$ is closed 
under $\dot F$. 
By Lemma 2.10, $z$ forces that $q$ is a 
strongly $(N[\dot G_\p],\q / \dot G_\p)$-generic condition. 
Also $z \le s$, and since $p \in N$, 
$z$ forces that $\dot p = \check p$ is in $N[\dot G_\p]$.
\end{proof}

We give an example of a non-diagonally strongly proper forcing poset 
which is not strongly proper on a stationary set.

Consider a stationary set $S \subseteq \omega_1$. 
Recall the forcing poset $\p_S$ for adding a club subset of $S$ 
with finite conditions (\cite[Theorem 3]{AS}). 
A condition in $\p_S$ is a finite set $p$ of ordered pairs 
$\langle \alpha, \beta \rangle$ such that $\alpha \in S$ and 
$\alpha \le \beta < \omega_1$, 
and whenever $\langle \alpha, \beta \rangle$ and $\langle \alpha', \beta' \rangle$ 
are in $p$ then it is not the case that $\alpha < \alpha' \le \beta$. 
Let $q \le p$ in $\p_S$ if $p \subseteq q$.

\begin{proposition}
Let $N$ be a countable elementary substructure of 
$(H(\omega_1),\in,S)$ such that $N \cap \omega_1 \in S$, and 
let $p \in N \cap \p_S$. 
Then $p \cup \{ \langle N \cap \omega_1, N \cap \omega_1 \rangle \}$ 
is a strongly $(N,\p_S)$-generic condition.
\end{proposition}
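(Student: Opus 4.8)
Let $q = p \cup \{\langle \delta, \delta\rangle\}$ where $\delta := N \cap \omega_1$. The plan is to verify directly that $q$ is a legitimate condition in $\p_S$, and then to exhibit a restriction function $r \mapsto r \restriction N$ witnessing strong $(N,\p_S)$-genericity via Lemma 2.2(2). First I would check that $q \in \p_S$: since $\delta \in S$ by hypothesis, the pair $\langle \delta,\delta\rangle$ is formally allowed, and I must confirm the forbidden pattern $\alpha < \alpha' \le \beta$ does not arise between $\langle\delta,\delta\rangle$ and any $\langle\alpha,\beta\rangle \in p$. Since $p \in N$, every ordinal appearing in $p$ is below $\delta = N \cap \omega_1$; so for $\langle\alpha,\beta\rangle \in p$ we have $\alpha \le \beta < \delta$, which rules out both $\delta < \alpha \le \delta$ (impossible) and $\alpha < \delta \le \beta$ (impossible since $\beta < \delta$). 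Hence $q \in \p_S$ and $q \le p$.

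Next I would define, for any $r \le q$ in $\p_S$, the restriction $r \restriction N := r \cap N$, and argue this equals $\{\langle\alpha,\beta\rangle \in r : \beta < \delta\}$ — i.e. the pairs of $r$ lying entirely below $\delta$. The key observation is that a pair $\langle\alpha,\beta\rangle \in r$ with $\alpha < \delta$ must in fact have $\beta < \delta$: otherwise $\beta \ge \delta$, and since $\langle\delta,\delta\rangle \in r$ (as $r \le q$) the pair $\langle\alpha,\delta\rangle$-type conflict $\alpha < \delta \le \beta$ would violate the condition-hood of $r$. So the pairs of $r$ split cleanly into those below $\delta$ (which form $r \restriction N$) and those with $\alpha \ge \delta$. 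One checks $r \restriction N$ is a condition (it is a subset of $r$, hence inherits the no-pattern property) and lies in $N \cap \p_S$ (it is a finite set of pairs of ordinals below $\delta = N \cap \omega_1$, hence an element of $N$ by elementarity, since $N$ is closed under finite subsets of $N$).

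Then I would show: for every $v \le r \restriction N$ in $N \cap \p_S$, $r$ and $v$ are compatible, with witness $r \cup v$. I must verify $r \cup v \in \p_S$. The pairs of $r \cup v$ are: pairs of $r$ below $\delta$ (these are in $r \restriction N \subseteq v$, so contribute nothing new), pairs of $r$ with first coordinate $\ge \delta$, and pairs of $v$. All pairs of $v$ have both coordinates $< \delta$ (since $v \in N$). So a potential conflict $\alpha < \alpha' \le \beta$ in $r \cup v$ must involve one pair from $v$ (or from $r \restriction N$) and one pair of $r$ with first coordinate $\ge \delta$. If the conflicting pair $\langle\alpha',\beta'\rangle$ from the "new" part of $r$ has $\alpha' \ge \delta$, then any $\langle\alpha,\beta\rangle$ with $\alpha < \alpha' \le \beta$ must have $\beta \ge \delta$; but then $\langle\alpha,\beta\rangle \notin v$ and $\langle\alpha,\beta\rangle \notin r \restriction N$, so it lies in the new part of $r$ as well — and $r$ itself is a condition, contradiction. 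The symmetric case (the pair with small first coordinate comes from $r$'s new part, impossible since those have first coordinate $\ge \delta$) and the case where $\langle\alpha,\beta\rangle \in v$ and $\langle\alpha',\beta'\rangle \in r\restriction N \subseteq v$ — already ruled out by $v$ being a condition — complete the check. Hence $r \cup v \le r, v$.

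The main obstacle is the compatibility verification in the last step: one has to carefully track which pairs of $r \cup v$ could participate in a forbidden configuration and reduce each case either to $r$ being a condition or to $v$ being a condition, using crucially that $\delta = N \cap \omega_1$ acts as a clean cut point separating $v$'s ordinals (all $<\delta$) from the "tail" of $r$ (first coordinates $\ge \delta$), with the pivotal pair $\langle\delta,\delta\rangle$ forcing any pair of $r$ straddling $\delta$ to be excluded. Once these case analyses are done, Lemma 2.2(2) immediately gives that $q$ is strongly $(N,\p_S)$-generic.
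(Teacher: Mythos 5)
Your proof is correct and follows the natural argument (the paper states Proposition 4.7 without proof, calling it straightforward, and your argument is exactly what one would expect). The key points are all in place: all ordinals appearing in $p$ lie below $\delta := N\cap\omega_1$ since $p\in N$ is finite; the pair $\langle\delta,\delta\rangle$ forces every pair of any $r\le q$ to lie entirely below $\delta$ or have first coordinate $\ge\delta$ (no straddling); $r\restriction N := r\cap N$ therefore picks out exactly the pairs of $r$ below $\delta$, is a condition, and lies in $N$ since $N$ is closed under ordered pairs and finite sets; and compatibility of $r$ with any $v\le r\restriction N$ in $N\cap\p_S$ is witnessed by $r\cup v$ because $\delta$ cleanly separates the coordinates of $v$ (all $<\delta$) from the first coordinates of the new pairs of $r$ (all $\ge\delta$), so any forbidden configuration in $r\cup v$ must already live entirely inside $r$ or entirely inside $v$. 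Lemma 2.2(2) then yields strong $(N,\p_S)$-genericity.
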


The proof is straightforward. 
The forcing poset $\p_S$ preserves all cardinals and adds a club 
subset of $S$; see \cite{AS} for the details.

Now consider $S_0$ and $S_1$ which are disjoint stationary subsets 
of $\omega_1$. 
Define $\q$ as the forcing poset consisting of conditions of the 
form $(i,p)$, where $i \in \{ 0, 1 \}$ and $p \in \p_{S_i}$. 
Let $(j,q) \le (i,p)$ if $i = j$ and $q \le p$ in $\p_{S_i}$.

\begin{proposition}
The forcing poset $\q$ is non-diagonally strongly proper.
\end{proposition}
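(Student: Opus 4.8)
The plan is to reduce everything to Proposition 4.7 by exploiting the fact that $\q$ is simply the disjoint sum of $\p_{S_0}$ and $\p_{S_1}$: a condition $(j,r)$ extends $(i,p)$ only when $j=i$, so below a fixed condition $(i_0,q_0)$ the poset $\q$ is isomorphic, via $(i_0,r)\mapsto r$, to $\p_{S_{i_0}}$ restricted below $q_0$; and for any $N$ with $i_0\in N$, the conditions of $N\cap\q$ with first coordinate $i_0$ form, under the same map, a copy of $N\cap\p_{S_{i_0}}$. Thus a strongly $(N,\p_{S_{i_0}})$-generic condition should lift to a strongly $(N,\q)$-generic one. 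Since $\q\subseteq H(\omega_1)$ we have $\lambda_\q=\omega_1$, so by the definition of non-diagonal strong properness it suffices, for each $(i_0,p_0)\in\q$, to produce stationarily many $N\in P_{\omega_1}(H(\omega_1))$ with $(i_0,p_0)\in N$ carrying a strongly $N$-generic extension of $(i_0,p_0)$.

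Fix $(i_0,p_0)\in\q$. By the standard fact that for a stationary set $S\subseteq\omega_1$ the collection of countable $N\prec(H(\omega_1),\in,S_0,S_1)$ with $N\cap\omega_1\in S$ is stationary, the set of such $N$ which moreover contain $(i_0,p_0)$ and satisfy $N\cap\omega_1\in S_{i_0}$ is stationary; it therefore suffices to fix one such $N$, put $\delta:=N\cap\omega_1\in S_{i_0}$, and exhibit a strongly $(N,\q)$-generic extension of $(i_0,p_0)$. Since $N\prec(H(\omega_1),\in,S_{i_0})$ and $p_0\in N\cap\p_{S_{i_0}}$, Proposition 4.7 (applied with $S=S_{i_0}$) tells us that $q_0:=p_0\cup\{\langle\delta,\delta\rangle\}$ is a condition of $\p_{S_{i_0}}$ and is strongly $(N,\p_{S_{i_0}})$-generic; as $p_0\subseteq q_0$ we get $q_0\le p_0$ in $\p_{S_{i_0}}$, hence $(i_0,q_0)\le(i_0,p_0)$ in $\q$.

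What remains is to check that $(i_0,q_0)$ is strongly $(N,\q)$-generic, which is the only computation and is immediate from the structural remark. Let $D$ be dense in $N\cap\q$ and set $D':=\{r:(i_0,r)\in D\}$, a subset of $N\cap\p_{S_{i_0}}$. Given $s\in N\cap\p_{S_{i_0}}$ we have $(i_0,s)\in N\cap\q$, so there is $(j,r)\in D$ with $(j,r)\le(i_0,s)$, forcing $j=i_0$ and $r\le s$; thus $r\in D'$ and $D'$ is dense in $N\cap\p_{S_{i_0}}$. By strong $(N,\p_{S_{i_0}})$-genericity of $q_0$, $D'$ is predense in $\p_{S_{i_0}}$ below $q_0$. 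Now let $(j,r)\le(i_0,q_0)$ in $\q$; then $j=i_0$ and $r\le q_0$, so $r$ is compatible in $\p_{S_{i_0}}$ with some $r'\in D'$, whence $(i_0,r)$ is compatible in $\q$ with $(i_0,r')\in D$. So $D$ is predense in $\q$ below $(i_0,q_0)$, and $(i_0,q_0)$ is strongly $(N,\q)$-generic. As $N$ ranges over the stationary set above, this gives the required stationarily many witnesses for $(i_0,p_0)$. No step is a genuine obstacle; the only point to keep in mind is that compatibility in $\q$ never crosses the two sides, which is exactly what makes density of $D$ in $N\cap\q$ project to density of $D'$ in $N\cap\p_{S_{i_0}}$.
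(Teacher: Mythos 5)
Your proof is correct and follows essentially the same approach as the paper: fix a countable $N \prec (H(\omega_1),\in,S_0,S_1)$ with $N\cap\omega_1\in S_{i_0}$ and $(i_0,p_0)\in N$, invoke Proposition 4.7 for $\p_{S_{i_0}}$, and conclude that $(i_0, p_0\cup\{\langle N\cap\omega_1,N\cap\omega_1\rangle\})$ is a strongly $(N,\q)$-generic extension. The paper treats as immediate the transfer of strong genericity from $\p_{S_{i_0}}$ to $\q$; your explicit verification of this step via the disjoint-sum structure is the routine detail it leaves unsaid.
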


\begin{proof}
Let $(i,p)$ be a condition in $\q$ and  
let $F : H(\omega_1)^{<\omega} \to H(\omega_1)$ be a function. 
Since $S_i$ is stationary, we can fix a countable 
elementary substructure $N$ of $(H(\omega_1),\in,F,S_0,S_1)$ 
such that $N \cap \omega_1 \in S_i$ and $(i,p) \in N$. 
Then Proposition 4.7 implies that 
$(i,p \cup \{ \langle N \cap \omega_1, N \cap \omega_1 \rangle \})$ 
is a strongly $(N,\q)$-generic condition extending $(i,p)$.
\end{proof}

\begin{proposition}
The forcing poset $\q$ is not strongly proper on a stationary set.
\end{proposition}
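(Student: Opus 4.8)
The plan is to show that for club many countable $N \prec (H(\theta),\in,S_0,S_1,\q)$, there is a condition in $N \cap \q$ with no strongly $N$-generic extension, which suffices to refute strong properness on a stationary set. The obstruction is the mismatch between the two coordinates: a strongly $N$-generic condition must control all dense subsets of $N \cap \q$, but the dense subset of $\q$ forcing the generic club to meet $N \cap \omega_1$ cannot be handled simultaneously for both $S_0$ and $S_1$, since $N \cap \omega_1$ lies in at most one of $S_0, S_1$.

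First I would fix a club $C \subseteq \omega_1$ and argue that for every $\delta \in C$ which is the intersection $N \cap \omega_1$ for some $N \prec (H(\theta),\in,S_0,S_1,\q)$ with the relevant parameters in $N$, the condition $(1-i,\emptyset)$ — where $i \in \{0,1\}$ is chosen so that $\delta \notin S_{1-i}$ (such $i$ exists since $S_0 \cap S_1 = \emptyset$) — has no strongly $N$-generic extension in $\q$. Note $(1-i,\emptyset) \in N \cap \q$ since $\emptyset \in \p_{S_{1-i}}$ and both are definable. Suppose toward a contradiction that $(1-i,r)$ is a strongly $N$-generic condition below $(1-i,\emptyset)$. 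By the definition of the ordering on $\q$, every condition below $(1-i,r)$ has first coordinate $1-i$, so the map $p \mapsto (1-i,p)$ identifies the suborder of $\q$ below $(1-i,\emptyset)$ with $\p_{S_{1-i}}$, and identifies $N \cap \{(1-i,p) : p \in \p_{S_{1-i}}\}$ with $N \cap \p_{S_{1-i}}$. Hence $r$ would be a strongly $(N,\p_{S_{1-i}})$-generic condition.

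Next I would derive a contradiction from the existence of a strongly $(N,\p_{S_{1-i}})$-generic condition $r$ when $\delta = N \cap \omega_1 \notin S_{1-i}$. Consider the set $D$ of conditions $p \in \p_{S_{1-i}}$ such that $p$ contains a pair $\langle \alpha,\beta\rangle$ with $\beta \geq \delta$; by elementarity $D \cap N$ is dense in $N \cap \p_{S_{1-i}}$, since for any $p \in N \cap \p_{S_{1-i}}$ and any $\alpha \in S_{1-i} \cap N$ above $\sup\{\beta : \langle \gamma,\beta\rangle \in p\}$ — which exists in $N$ as $S_{1-i}$ is unbounded in $\omega_1$ from $N$'s point of view — the condition $p \cup \{\langle \alpha,\alpha\rangle\}$ extends $p$ and lies in $D \cap N$. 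By strong genericity, $D \cap N$ is predense below $r$. But any $p \in D$ has a pair $\langle \alpha,\beta\rangle$ with $\alpha \in S_{1-i} \cap N = S_{1-i} \cap \delta$, hence $\alpha < \delta \le \beta$; now extend $r$ to a condition $r' \leq r$ with a pair $\langle \alpha',\beta'\rangle$ where $\delta \le \alpha'$ (possible since $\delta \notin S_{1-i}$ forces nothing, but more carefully: pick $\alpha' \in S_{1-i}$ with $\alpha' > \sup$ of all ordinals appearing in $r$, and set $r' = r \cup \{\langle \alpha',\alpha'\rangle\}$, which is a legal condition). Then $r'$ is incompatible with every $p \in D$: joining $r'$ with such a $p$ would require $p \cup r'$ to be a condition, but it contains $\langle \alpha,\beta\rangle$ with $\alpha < \delta \le \alpha'$ and also $\langle \alpha',\beta'\rangle$ with $\alpha' \ge \delta > \alpha$, and since $\beta \ge \delta$ we may arrange $\alpha < \alpha' \le \beta$ by choosing $p$'s pair with $\beta$ large — this is the step to verify carefully, using that elements of $D \cap N$ can have $\beta$ arbitrarily large below... no: since $\alpha, \beta < \delta$ is \emph{false} here, rather $\alpha < \delta \le \beta$, and $\alpha' \ge \delta$, so $\alpha < \alpha' \le \beta$ precisely when $\alpha' \le \beta$; so instead I take $r'$'s new pair to have \emph{first} coordinate $\alpha'$ with $\alpha < \alpha' \le \beta$ for the relevant $\beta$ — this requires choosing $r'$ after seeing a candidate from $D \cap N$. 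The clean way: since $D \cap N$ is predense below $r$, fix $p \in D \cap N$ compatible with $r$, witnessed by $s \le r, p$; $p$ has a pair $\langle \alpha, \beta \rangle$ with $\alpha \in S_{1-i} \cap \delta$, $\beta \ge \delta$. Now pick $\alpha^* \in S_{1-i}$ with $\alpha < \alpha^* \le \beta$ and $\alpha^* \notin \{$ ordinals in $s \}$: impossible in general since $(\alpha,\beta]$ may contain no element of $S_{1-i}$.

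\textbf{The genuine argument}, which I would ultimately write: the \emph{correct} dense set is $D = \{p \in \p_{S_{1-i}} : \exists \langle \alpha,\beta\rangle \in p,\ \beta > \delta\}$, dense in $N \cap \p_{S_{1-i}}$ by the same elementarity argument (extend by $\langle \alpha,\alpha\rangle$ for $\alpha \in S_{1-i} \cap N$ large), hence predense below $r$; now fix $p \in D \cap N$ and $s \le r,p$, and note $p$ has a pair $\langle \alpha,\beta\rangle$ with $\alpha < \delta < \beta$ (as $\alpha \in N$, $\beta \in N$ forces $\beta < \delta$ — contradiction!). Indeed any pair in $p \in N$ has both coordinates in $N \cap \omega_1 = \delta$, so $\beta < \delta$, contradicting $\beta > \delta$; so $D \cap N = \emptyset$, yet $D$ must be dense in $N \cap \p_{S_{1-i}}$ — the resolution is that $D$ is \emph{not} in $N$ as it refers to $\delta$. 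The honest version: work with the dense-in-$N\cap\p_{S_{1-i}}$ set $E = \{p : \max\{\beta : \langle\gamma,\beta\rangle \in p\} \text{ is as large as desired}\}$ and use that a strongly generic $r$ must be such that below $r$ one can still meet, inside $N$, conditions whose ordinals approach $\delta$, while simultaneously $r$'s own ordinals are bounded below some $\beta_0 < \omega_1$; choosing in $N$ a condition $p$ with a pair $\langle \alpha,\beta\rangle$ where $\alpha > \beta_0$ — possible since $S_{1-i} \cap N$ is cofinal in $\delta$ and... The real content, and the step I expect to be the main obstacle, is exactly this: correctly identifying a dense subset of $N \cap \p_{S_{1-i}}$ whose predensity below any candidate $r$ fails because $\delta \notin S_{1-i}$ — the point being that to ``close off'' a strongly generic condition one must at stage $\delta$ add the pair $\langle \delta,\delta\rangle$ (as in Proposition 4.7), which is illegal when $\delta \notin S_{1-i}$, and no other device bounds the club inside $N$. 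I would phrase the final contradiction via: $r$ strongly $(N,\p_{S_{1-i}})$-generic implies $r$ forces (in $\p_{S_{1-i}}$) that the generic club has $\delta$ as a limit point but $\delta$ is in the generic club only if $\delta \in S_{1-i}$, yet the generic club is \emph{closed}, so $\delta$ belongs to it — contradicting $\delta \notin S_{1-i}$. Since this holds for club many $N$, no stationary set of $N$ can witness strong properness, and the proof is complete.
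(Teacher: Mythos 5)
Your final \emph{honest version} converges on the same argument as the paper: a (strongly) $N$-generic extension of $(j,\emptyset)$, where $N \cap \omega_1 \notin S_j$, would force $N \cap \omega_1$ into the generic club $\dot C \subseteq S_j$, a contradiction; and the reduction from $\q$ below $(j,\emptyset)$ to the single coordinate $\p_{S_j}$ is correct. So the core idea matches.

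However, most of the proposal consists of abandoned false starts, all of which try to exhibit a dense subset $D$ of $N \cap \p_{S_j}$ whose definition explicitly mentions $\delta = N \cap \omega_1$. As you eventually observe, this cannot work: any $p \in N \cap \p_{S_j}$ has all its ordinals below $\delta$, so such a $D$ has $D \cap N = \emptyset$ and, more to the point, cannot itself lie in $N$. The correct route is the one you sketch only in the last sentence, and the step you wave at there --- that a strongly $N$-generic $r$ forces $\delta$ to be a limit point of $\dot C$ --- needs to actually be argued: for each $\alpha \in N \cap \omega_1$, the set $D_\alpha$ of conditions forcing $\dot C \cap (\alpha,\omega_1) \neq \emptyset$ is dense and definable from $\alpha$ and $\dot C \in N$, hence lies in $N$; strong genericity makes $D_\alpha \cap N$ predense below $r$, so $r$ forces $\dot C$ unbounded in $\delta$, and closure of $\dot C$ gives $\delta \in \dot C \subseteq S_j$. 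The paper compresses all of this into a single invocation of the standard fact about $N$-generic conditions: they force $N[\dot G] \cap \omega_1 = N \cap \omega_1$, and $N \cap \omega_1 \in \dot C$ for any club name $\dot C \in N$. You should cut the exploratory middle portion and simply apply that fact, or else spell out the $D_\alpha$ argument; as written, the proposal asserts the crucial step without proving it.
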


\begin{proof}
Let $\dot C$ be a $\q$-name for the club subset of $\omega_1$ added by $\q$. 
Suppose that $N$ is an elementary substructure of $H(\chi)$ for some 
regular $\chi > \omega_1$ such that $S_0$, $S_1$, $\q$, and $\dot C$ 
are in $N$. 
Since $S_0$ and $S_1$ are disjoint, $N \cap \omega_1$ cannot be in 
both $S_0$ and $S_1$. 
Fix $j \in \{ 0, 1 \}$ such that $N \cap \omega_1$ is not in $S_j$. 
We claim that the condition $(j,\emptyset)$, which is in $N \cap \q$, 
does not have a strongly $N$-generic extension. 
In fact, it does not have an $N$-generic extension.

Suppose for a contradiction that $(j,q) \le (j,\emptyset)$ is $N$-generic. 
Then $(j,q)$ forces that $N[\dot G] \cap V = N$, and in particular, 
$N[\dot G] \cap \omega_1 = N \cap \omega_1$. 
Since $\dot C \in N$, $\q$ forces that $N[\dot G] \cap \omega_1 \in \dot C$. 
So $(j,q)$ forces that $N \cap \omega_1 \in \dot C$. 
Since $(j,q)$ forces that $\dot C$ is a subset of $S_j$, 
$(j,q)$ forces that $N \cap \omega_1 \in S_j$. 
But $N \cap \omega_1$ is not in $S_j$, so we have a contradiction.
\end{proof}

\section{A Counterexample}

We give an example of a strongly proper 
forcing poset $\q$ and a regular suborder $\p$ of $\q$ 
such that $\p$ forces that 
$\q / \dot G_\p$ is not strongly proper on a stationary set. 
We will make use of the following well known fact.

\begin{lemma}
If $\q$ is strongly proper on a stationary set, then any generic extension 
of $V$ by $\q$ contains a $V$-generic Cohen real.
\end{lemma}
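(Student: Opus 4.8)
The plan is to extract a Cohen real from a strongly $N$-generic condition by a standard fusion/splitting argument inside $N \cap \q$. Fix a large regular $\chi$ and, using that $\q$ is strongly proper on a stationary set, fix a countable $N \prec H(\chi)$ with $\q \in N$ such that every $p \in N \cap \q$ has a strongly $N$-generic extension. Let $H$ be $V$-generic on $\q$; it suffices to define, in $V[H]$, a function $c : \omega \to 2$ which is Cohen-generic over $V$. First I would work inside $N$: since $N$ is countable, enumerate in order type $\omega$ the dense subsets of $\mathrm{Add}(\omega,1)$ that lie in $N$, say $\langle E_n : n < \omega \rangle$, and also enumerate $N \cap \q$ as $\langle q_n : n < \omega \rangle$. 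The idea is to build, working in $V$ but using only objects in $N$, a tree of conditions in $N \cap \q$ indexed by finite binary strings, where the two immediate successors of a node force incompatible finite information about a name $\dot c$ for the Cohen real; more precisely, I would have $\q$ (or rather $N \cap \q$, which as a forcing is in $N$ up to the enumeration) add a Cohen real in the usual way, but the key point is to arrange that the splitting is ``honest'' enough that a single strongly $N$-generic condition reads it off.

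Concretely: let $\dot c$ be the canonical $(N \cap \q)$-name for the Cohen real added by the first $\omega$ splitting levels of $N \cap \q$'s separative quotient — since $N \cap \q$ is a nontrivial (indeed atomless, as $\q$ adds a new real/collapses nothing relevant — one must check $N\cap \q$ is atomless, which follows since strongly proper forcings of interest here add reals; if $N \cap \q$ had an atom below some $q_n$, genericity would fail) countable forcing, it is forcing-equivalent to $\mathrm{Add}(\omega,1)$, so fix such an equivalence in $V$ and let $\dot c \in N$ be the induced name. Now take $q$ a strongly $N$-generic condition (below $1_\q$, say) and let $H \ni q$ be $V$-generic. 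Set $c := \dot c^{\,H \cap N}$, where $H \cap N$ is the filter on $N \cap \q$ induced by $H$. The main claim is that $c$ is Cohen-generic over $V$: given a dense $E \subseteq \mathrm{Add}(\omega,1)$ in $V$, the set $D_E$ of conditions in $N \cap \q$ deciding $\dot c \restriction k$ to extend some element of $E$ (for some $k$) is dense in $N \cap \q$; since $q$ is strongly $N$-generic, $D_E$ is predense in $\q$ below $q$, so $H$ meets it, so $c$ extends an element of $E$. Here one uses that $D_E$ really is dense in $N\cap\q$ — that is where $\dot c$ being the name for a Cohen real (so that every condition can be extended to decide more of $\dot c$, compatibly with landing in $E$) is used.

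The delicate point — and the main obstacle — is the passage from ``$H$ meets the dense set $D \in N$ of $N \cap \q$'' to a statement about $c = \dot c^{H \cap N}$: one must make sure $H \cap N$ is a genuine $V$-generic filter on $N \cap \q$, and that $\dot c^{H\cap N}$ agrees with the Cohen real one would compute from $H$ itself. Genericity of $H \cap N$ over $V$ is exactly the content of strong $N$-genericity of $q$ (a dense $D$ in $N \cap \q$ is predense in $\q$ below $q \in H$, so $H$ meets $D$, and $H \cap N$ is upward-closed in $N\cap\q$ and a filter by the usual argument), but one should be slightly careful that $N \cap \q$ is being treated as a forcing poset \emph{in $V$} — it is a countable set of conditions with the restricted order, and its dense subsets in $V$ need not lie in $N$; however only the countably many dense sets coming from $\mathrm{Add}(\omega,1)^V$-density need to be met, and each such is dense in $N\cap\q$ by the previous paragraph. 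So the argument goes through, and I would present it in essentially this order: fix $\chi$, $N$, check $N \cap \q$ is atomless (hence $\cong \mathrm{Add}(\omega,1)$), transport a Cohen name $\dot c$ into $N$, take a strongly $N$-generic $q$, and verify genericity of $c := \dot c^{H \cap N}$ against ground-model dense sets using predensity below $q$.
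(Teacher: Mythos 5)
Your final argument is essentially the paper's: find a countable elementary $N$ and a strongly $N$-generic $q\in H$, observe that strong genericity makes $H\cap N$ a $V$-generic filter on $N\cap\q$, and invoke the fact that a countable atomless poset is forcing-equivalent to $\add(\omega)$. The initial fusion/tree plan is a detour you abandon anyway, and the fussing about $\dot c\in N$ and density of $D_E$ is unnecessary: once $N\cap\q$ is known to be countable and atomless in $V$, a dense embedding $e:N\cap\q\to\add(\omega)$ exists in $V$, and pushing the $V$-generic filter $H\cap N$ through $e$ gives a $V$-generic Cohen real; neither $e$ nor $\dot c$ nor $N\cap\q$ itself need lie in $N$.

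There is one genuine gap. The lemma asserts that \emph{every} generic extension contains a $V$-generic Cohen real, but you fix $N$ before $H$ and then write ``let $H\ni q$ be $V$-generic,'' which handles only those $H$ containing your chosen $q$. For a fixed $N$, strongly $N$-generic conditions are dense only below conditions of $N\cap\q$, not in all of $\q$, so an arbitrary $H$ need not meet them. The fix is the density argument the paper alludes to, with $N$ varying: given $p\in\q$, stationarity (after intersecting with the club of elementary substructures containing $p$ and $\q$) provides a countable elementary $N$ with $p,\q\in N$ for which $p$ has a strongly $N$-generic extension; hence the set of conditions that are strongly $N$-generic for \emph{some} such $N$ is dense, and an arbitrary $H$ meets it. Finally, your justification that $N\cap\q$ is atomless (``genericity would fail'') is not really an argument; what is true is that $\q$ must be implicitly assumed nontrivial (atomless in the separative sense --- otherwise the lemma is false for the trivial poset), and this transfers to $N\cap\q$ by elementarity of $N$.
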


\begin{proof}
(Sketch) Let $G$ be a $V$-generic filter on $\q$. 
By a density argument, there exists a countable 
elementary substructure $N$ 
with $\q \in N$ and a strongly $N$-generic condition $q \in G$. 
Then $N \cap G$ is a $V$-generic filter on $N \cap \q$. 
But $N \cap \q$ is a countable nontrivial forcing poset, and hence is 
forcing equivalent to Cohen forcing $\add(\omega)$.
\end{proof}

So it suffices to define a strongly proper forcing poset $\q$ and a regular 
suborder $\p$ of $\q$ such that $\p$ forces that 
$\q / \dot G_\p$ is nontrivial and does not add reals over $V[\dot G_\p]$. 

Assume that $2^\omega = \omega_1$ and $2^{\omega_1} = \omega_2$. 
Let $\mathcal X$ denote the set of all countable elementary substructures 
of $H(\omega_3)$. 
The forcing poset $\q$ consists of finite coherent adequate subsets of $\mathcal X$, 
ordered by inclusion. 
The definition of coherent adequate is beyond the scope of the paper, although 
we give additional information about adequacy in the next section. 
Roughly speaking, a coherent adequate set of models 
satisfies that any two models in it are membership comparable up to some 
initial segment of the models, and if the models are equal up to some initial 
segment, then they are isomorphic. 
For the complete definition, see \cite[Section 1]{jk25}.

The following lists the properties of the forcing poset $\q$ which we will use. 
For a proof of (1) and (2) see \cite{jk25}. 
For a proof of (3) see \cite{miyamoto}.

\begin{proposition}
The following statements hold:
\begin{enumerate}
\item $\q$ is strongly proper and $\omega_2$-c.c.
\item $\q$ forces CH.
\item $\q$ forces that there exists an $\omega_1$-Kurepa 
tree with $\omega_3$ many distinct branches.
\end{enumerate}
\end{proposition}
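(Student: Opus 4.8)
The plan is to handle the three parts separately; parts (1) and (2) are carried out in \cite{jk25} and part (3) in \cite{miyamoto}, and the sketch below follows those treatments. For the strong properness in (1), the idea is the standard one for pure side condition forcings: given a countable $N \prec H(\chi)$ with $\q$, $\mathcal X$ and enough parameters in $N$, and given $p \in N \cap \q$, show that $p \cup \{N \cap H(\omega_3)\}$ is a condition extending $p$ which is strongly $(N,\q)$-generic. That $p \cup \{N \cap H(\omega_3)\}$ is again a finite coherent adequate set uses that $N \cap H(\omega_3)$ is itself a countable elementary substructure of $H(\omega_3)$ and that $p \subseteq N \cap H(\omega_3)$. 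Strong genericity is then verified via the criterion of Lemma 2.2(2): to each $r \le p \cup \{N \cap H(\omega_3)\}$ one assigns a restriction $r \restriction N \in N \cap \q$ consisting of the part of $r$ lying below $N \cap H(\omega_3)$, and one checks that every $v \le r \restriction N$ in $N \cap \q$ is compatible with $r$. This last compatibility is an amalgamation of two conditions agreeing on their common initial part; it is the technical core, it is exactly the amalgamation lemma for coherent adequate sets from \cite{jk21} and \cite{jk25}, and I expect it to be the main obstacle.

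For the $\omega_2$-chain condition in (1), I would take an arbitrary family of $\omega_2$-many conditions, normalize their sizes, and run a $\Delta$-system / pressing-down argument; using $2^\omega = \omega_1$ one arranges $\omega_2$-many of them to be pairwise isomorphic over a common root, coherence forces the witnessing isomorphism to fix the root, and then two such conditions amalgamate (again by the amalgamation lemma) into a common extension. For (2), that $\q$ forces CH: since $\q$ is proper it preserves $\omega_1$, and since it is $\omega_2$-c.c.\ it preserves $\omega_2$, so it suffices to force $2^\omega \le \omega_1$. The plan is to show that every real of $V[G]$ lies in $V[G \cap N]$ for some $N \in \mathcal X$: if $\dot r \in N$ is a nice name for a real and $q \in G$ is strongly $(N,\q)$-generic, then for each $n$ the dense set of conditions deciding $\dot r(n)$ meets $N \cap \q$ densely and hence meets $G \cap N$, so $\dot r^G$ is computed from $\dot r$ and $G \cap N$. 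Since $N \cap \q$ is a countable poset, it is isomorphic to one of only $\le 2^\omega = \omega_1$ countable posets, each carrying only $\omega_1$-many nice names for reals, each of which takes only $\le \omega_1$-many values over all $V$-generic filters; so $V[G]$ has at most $\omega_1$ reals.

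For (3) I would follow Miyamoto \cite{miyamoto}: in $V[G]$ define a tree $T$ of height $\omega_1$ whose level-$\alpha$ nodes are hereditarily countable objects coding the behaviour below $\alpha$ of the generic family of models, ordered by restriction. CH in $V$, preserved to $V[G]$, is used to see that each level of $T$ is countable, so that $T$ is an $\omega_1$-tree. Each countable elementary substructure $M$ of $H(\omega_3)$ from $V$ determines, with the help of the generic filter threading the models, a cofinal branch $b_M$ of $T$, and the coding is set up so that distinct $M$ give distinct branches; since $\omega_3$ is preserved and there are at least $\omega_3$-many such $M$ in $V$, this produces at least $\omega_3$-many branches, so $T$ is Kurepa. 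Here the main obstacles are exactly the two nontrivial points of \cite{miyamoto}: checking that the levels of $T$ really are countable, and checking that each branch $b_M$ actually belongs to $V[G]$ rather than only to some further extension.
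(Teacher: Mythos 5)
The paper itself gives no proof of this proposition: it simply states that (1) and (2) are proved in \cite{jk25} and (3) in \cite{miyamoto}. So there is no paper argument to compare against, and your task here was really to sketch the arguments from those references. Your outlines of (1) and (3) do follow those treatments in broad strokes: for strong properness, adjoining $N \cap H(\omega_3)$ as a side condition and verifying Lemma~2.2(2) via the restriction map and the amalgamation lemma is exactly the standard approach, and for the Kurepa tree you correctly identify the two delicate points (countability of levels, and the branches living in $V[G]$ rather than a further extension).

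Your sketch of (2), however, contains a genuine gap, and the gap is precisely where coherence is supposed to do its work. You reduce to: every real of $V[G]$ lies in $V[G \cap N]$ for some countable $N$ with a strongly $N$-generic condition in $G$ — that much is fine. But the concluding count, ``$N \cap \q$ is one of $\le \omega_1$ countable posets, each with $\le \omega_1$ nice names, each taking $\le \omega_1$ values over all $V$-generic filters, hence $\le \omega_1$ reals,'' is not correct, and cannot be made correct without using coherence. The trouble is the last factor: you are tacitly bounding, in $V[G]$, the number of $V$-generic filters $G \cap N$ as $N$ ranges over $\mathcal{X}$. Even if all the posets $N \cap \q$ are pairwise isomorphic (they are all nonatomic countable posets, hence all Cohen), the filters $G \cap N$ can be pairwise distinct and there can be as many of them as there are models $N$ — potentially $\omega_2$ or more — and a single name can take a different value on each. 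A concrete way to see the argument is too crude: your counting uses only that $\q$ is strongly proper and $\omega_2$-c.c.\ over a CH ground model, and by Lemma~7.2 of this paper the poset $\add(\omega,\omega_2)$ has exactly those properties, yet it forces $2^\omega = \omega_2$. What actually makes CH survive for \emph{coherent} adequate set forcing is that coherence (models agreeing up to a level are isomorphic, and the isomorphism respects the side-condition structure) collapses the family of traces $\{G \cap N : N \in \mathcal{X}\}$ to essentially $\omega_1$ many, which is the content of the CH-preservation argument in \cite{jk25} and \cite{jk23}. Your sketch never invokes coherence, so it proves too much and hence cannot be right as stated.
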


Since $\q$ does not have greatest lower bounds, 
we will consider the Boolean completion of $\q$, which we 
denote by $\mathcal B$. 
Then easily $\mathcal B$ has the $\omega_2$-c.c., preserves all cardinals, 
and forces CH. 
Also a straightforward argument shows that since $\q$ is strongly 
proper, so is $\mathcal B$.

Let us find a regular suborder $\p$ of $\mathcal B$ which forces that 
$\mathcal B / \dot G_\p$ is nontrivial and does not add reals over $V[\dot G_\p]$. 
Since $\mathcal B$ forces CH, 
we can fix a sequence $\langle \dot r_i : i < \omega_1 \rangle$ 
of $\mathcal B$-names for subsets of $\omega$ 
such that $\mathcal B$ forces that every subset of $\omega$ is 
equal to $\dot r_i$ for some $i < \omega_1$. 
Moreover, we assume that each name $\dot r_i$ 
is a nice $\mathcal B$-name given by 
a sequence of antichains $\langle A^i_n : n < \omega \rangle$. 
So $(p,\check n)$ is in the name $\dot r_i$ iff $p \in A^i_n$. 
Since $\mathcal B$ is $\omega_2$-c.c., each antichain 
$A^i_n$ has size at most $\omega_1$.

Fix a regular cardinal $\chi > \omega_3$ such that 
$\q$, $\mathcal B$, $\langle \dot r_i : i < \omega_1 \rangle$, and 
$\langle A^i_n : n < \omega \rangle$ for all $i < \omega_1$ 
are members of $H(\chi)$. 
Let $N$ be an elementary substructure of $H(\chi)$ containing these parameters 
such that $N$ has size $\omega_2$ and $N^{\omega_1} \subseteq N$. 
This is possible since we are assuming that $2^{\omega_1} = \omega_2$. 
Now let $\p := N \cap \mathcal B$.

\begin{proposition}
The forcing poset $\p$ is a regular suborder of $\mathcal B$ which forces that 
$\mathcal B / \dot G_\p$ is a nontrivial forcing which does not add reals.
\end{proposition}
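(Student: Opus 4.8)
The plan is to verify two things about $\p := N \cap \mathcal B$: first, that $\p$ is a regular suborder of $\mathcal B$; second, that $\p$ forces the quotient to be nontrivial and to add no new reals. For regularity: since $N \prec H(\chi)$ and $\mathcal B$ is a complete Boolean algebra in $N$, the suborder $N \cap \mathcal B$ is a standard example of a regular suborder — conditions (a) and (b) of the definition of regular suborder follow from elementarity of $N$ together with the fact that $N^{\omega_1} \subseteq N$ is more than enough to ensure that a maximal antichain of $N \cap \mathcal B$ (which has size at most $\omega_1$, since $\mathcal B$ is $\omega_2$-c.c. and this is reflected by $N$) is genuinely an element of $N$, hence is maximal in $\mathcal B$ by elementarity. (In fact $N$ being closed under $\omega_1$-sequences makes $N \cap \mathcal B$ a complete subalgebra.)

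For nontriviality: the quotient $\mathcal B / \dot G_\p$ is nontrivial iff $\p$ is not all of $\mathcal B$, equivalently $N \ne H(\chi)$; this holds since $|N| = \omega_2$ while $|H(\chi)| > \omega_2$. (More carefully, one should check that $\p$ is not a dense suborder of $\mathcal B$; but if it were, then by the $\omega_2$-c.c.\ and CH-preservation we would get $\mathcal B$ forcing equivalent to a forcing of size $\omega_2$, and one can rule this out using property (3) of Proposition 5.2 — the Kurepa tree with $\omega_3$ branches cannot be added by a forcing of size $\omega_2$ in a model of $2^{\omega_1} = \omega_2$ — or more directly, just observe that $N$ omits some condition of $\mathcal B$ deciding a bit of the generic beyond what $N$ can see.)

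The heart of the argument, and the main obstacle, is showing that $\p$ forces $\mathcal B / \dot G_\p$ adds no reals. Here is where the closure $N^{\omega_1} \subseteq N$ and the bookkeeping sequence $\langle \dot r_i : i < \omega_1\rangle$ enter. Let $G_\p$ be $V$-generic on $\p$ and let $H$ be $V[G_\p]$-generic on $\mathcal B / G_\p$; by Lemma 1.6, $H$ is $V$-generic on $\mathcal B$ and $G_\p = H \cap \p$. A real $r \in V[H]$ is some subset of $\omega$, hence $r = \dot r_i^{\,H}$ for some $i < \omega_1$ by the choice of the bookkeeping sequence. The point is that $\dot r_i$ is a nice name built from antichains $\langle A^i_n : n < \omega\rangle$, each of size at most $\omega_1$; since $i < \omega_1$ and $N^{\omega_1} \subseteq N$, the whole name $\dot r_i$ — being coded by an $\omega_1$-sequence of subsets of $\mathcal B$ of size $\le \omega_1$, all of which belong to $N$ — is an element of $N$, and moreover each $A^i_n \subseteq N \cap \mathcal B = \p$. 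Therefore $\dot r_i$ is (equivalent to) a $\p$-name, and $\dot r_i^{\,H} = \dot r_i^{\,H \cap \p} = \dot r_i^{\,G_\p} \in V[G_\p]$. Hence every real of $V[H]$ already lies in $V[G_\p]$, which is exactly what it means for the quotient to add no reals.

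The delicate point to get right is that $N$ really does contain the names $\dot r_i$ for all $i < \omega_1$: this requires that $N$ was chosen to contain the sequences $\langle \dot r_i : i < \omega_1\rangle$ and $\langle A^i_n : n < \omega\rangle$ as single elements (so that each $A^i_n \in N$ by closure under $\omega_1$-sequences, and hence $A^i_n \subseteq N$ since $|A^i_n| \le \omega_1$ and $N^{\omega_1} \subseteq N$), which is precisely how $N$ was set up in the paragraph preceding the proposition. Once this is in place, the argument that the quotient adds no reals is the clean chain of equalities above, and nontriviality is the easy cardinality observation.
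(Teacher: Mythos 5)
Your argument is correct and follows the paper's proof almost line for line: regularity from elementarity plus the $\omega_2$-c.c.\ of $\mathcal B$ and closure $N^{\omega_1}\subseteq N$; nontriviality by comparing $2^{\omega_1}$ in the two extensions (via the $\omega_1$-Kurepa tree with $\omega_3$ branches); and no new reals because every $A^i_n$ lies in $N$ and hence is a \emph{subset} of $N$, making each $\dot r_i$ effectively a $\p$-name so that $\dot r_i^H = \dot r_i^{G_\p}$. One correction to your phrasing on nontriviality: it is not true that the quotient is nontrivial ``iff $\p$ is not all of $\mathcal B$, equivalently $N\ne H(\chi)$'' — a proper subalgebra can be dense, in which case the quotient is trivial. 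Your parenthetical correction identifying non-density as the real criterion, with the Kurepa-tree argument, is the right one and is what the paper does (the paper phrases it as $\p$ forcing $2^{\omega_1}=\omega_2$ while $\mathcal B$ forces $2^{\omega_1}\ge\omega_3$). The final ``more directly'' aside — that ``$N$ omits some condition deciding a bit of the generic'' — is not an argument for non-density and should be dropped; omitting conditions does not preclude being a dense subalgebra.
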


\begin{proof}
Suppose that $p$ and $q$ are in $\p$ and are compatible in $\mathcal B$. 
Then $p$ and $q$ are in $N$, so by elementarity, $p \wedge q$ 
is in $N \cap \mathcal B = \p$. 
So they are compatible in $\p$. 
Now let $A$ be a maximal antichain of $\p$. 
Then $A$ is an antichain of $\mathcal B$ and is a subset of $\mathcal B \cap N$. 
Since $\mathcal B$ is $\omega_2$-c.c., $A$ has size at most $\omega_1$. 
But $N^{\omega_1} \subseteq N$, so $A \in N$. 
Suppose for a contradiction that $A$ is not a maximal antichain of $\mathcal B$. 
Then there is a condition in $\mathcal B$ which is incompatible with every member of $A$. 
Since $A \in N$, by elementarity there is a condition in 
$N \cap \mathcal B = \p$ which 
is incompatible with every member of $A$.  
But then $A$ is not maximal in $\p$, which is a contradiction.

The forcing poset $\p$ has size $\omega_2$ and is $\omega_2$-c.c. 
Since $2^{\omega_1} = \omega_2$ in the ground model, $\p$ forces 
that $2^{\omega_1} = \omega_2$ by a standard nice name argument. 
But $\mathcal B$ forces that there is an $\omega_1$-tree with $\omega_3$ 
many distinct branches, and hence $2^{\omega_1} \ge \omega_3$. 
It follows that $\p$ forces that $\mathcal B / \dot G_\p$ is a nontrivial forcing.

To show that $\p$ forces that $\mathcal B / \dot G_\p$ does not add reals, 
let $G$ be a $V$-generic filter on $\mathcal B$. 
Let $G_\p := G \cap N$, which is a $V$-generic filter on $\p$.  
Let $r$ be a subset of $\omega$ in $V[G]$, and we will show that 
$r$ is in $V[G_\p]$. 
By assumption, there is $i < \omega_1$ such that 
$r = r_i$, where $r_i := \dot r_i^G$. 
For each $n < \omega$, $n \in r_i$ iff $A^i_n \cap G \ne \emptyset$. 
But $A^i_n$ is in $N$ and hence is a subset of $N$ since 
$A^i_n$ has size at most $\omega_1$. 
So $n \in r_i$ iff $A^i_n \cap G \cap N \ne \emptyset$ 
iff $A^i_n \cap G_\p \ne \emptyset$. 
It follows that $r_i$, and hence $r$, is in $V[G_\p]$.
\end{proof}

We note that in fact $\p$ forces that $\mathcal B / \dot G_\p$ does not 
have the $\omega_1$-approximation property. 
Recall that $\q$ forces that there exists an $\omega_1$-tree with 
$\omega_3$ many distinct branches. 
By looking at the details of the definition of this tree as presented in 
\cite{miyamoto}, one can argue that this tree already 
exists in a generic extension by $\p$. 
Since $2^{\omega_1} = \omega_2$ in the generic extension by $\p$, 
$\p$ forces that $\q / \dot G_\p$ adds new branches to the tree. 
But any initial segment of such a branch lies in the extension by $\p$, 
and this easily implies that the quotient does not 
have the $\omega_1$-approximation property.

\section{A strongly proper collapse}

We now turn towards proving our consistency result. 
We will construct a model in which $\mathsf{ISP}(\omega_2)$ holds 
and the continuum is greater than $\omega_2$. 
The forcing poset used to obtain this model will be of the form 
$\p \times \add(\omega,\lambda)$, where $\p$ is a strongly proper 
forcing which collapses a supercompact cardinal $\kappa$ to become 
$\omega_2$, and $\add(\omega,\lambda)$ is the forcing which adds 
$\lambda$ many Cohen reals. 

In this section we will describe the forcing poset $\p$. 
Essentially this forcing is the pure side condition forcing 
consisting of finite adequate subsets of $\kappa$ ordered by inclusion, 
where the notion of adequate set is the same as that defined by 
Krueger in \cite{jk21}, except that $\omega_2$ is replaced by a strongly 
inaccessible cardinal $\kappa$.\footnote{Mitchell \cite{mitchell2} was the 
first to define a strongly proper collapse of an 
inaccessible to become $\omega_2$. 
Neeman \cite{neeman} gives another example using his method of 
sequences of models of two types.}

Assume for the rest of the section 
that $\kappa$ is a strongly inaccessible cardinal. 
In particular, $H(\kappa)$ has size $\kappa$. 
Fix a bijection 
$\pi : \kappa \to H(\kappa)$. 
Then the structure $(H(\kappa),\in,\pi)$ has definable Skolem functions. 
For any set $a \subseteq \kappa$, let $Sk(a)$ denote the closure of $a$ 
under some (or equivalently any) set of definable Skolem functions.

Let $C$ be the club set of $\alpha < \kappa$ such that 
$Sk(\alpha) \cap \kappa = \alpha$. 
Let $\Lambda$ be the set of cardinals $\beta < \kappa$ 
such that $\beta$ is a limit 
point of $C$ with uncountable cofinality, and 
for all $\gamma < \beta$, $\gamma^\omega < \beta$. 
Note that $\Lambda$ is stationary in $\kappa$. 
Let $\mathcal X$ be the set of countable subsets $a$ of $\kappa$ such that 
$Sk(a) \cap \kappa = a$ and for all $\gamma \in a$, 
$\sup(C \cap \gamma) \in a$. 

Using $\kappa$ in place of $\omega_2$ and the sets $C$, $\Lambda$, and 
$\mathcal X$ just described, it is possible to develop 
the basic ideas of adequate sets word for word as in \cite{jk21}. 
We will give an overview some of the main points. 
The interested reader is invited to read Sections 1--4 of \cite{jk21} for the 
complete details.

For a set $M \in \mathcal X$, define $\Lambda_M$ as the set of 
$\beta \in \Lambda$ such that 
$$
\beta = \min(\Lambda \setminus \sup(M \cap \beta)).
$$
By Lemma 2.4 of \cite{jk21}, for all $M$ and $N$ in $\mathcal X$, 
$\Lambda_M \cap \Lambda_N$ has a largest element. 
This largest element is defined as $\beta_{M,N}$, 
the \emph{comparison point of $M$ and $N$}. 

One of the most important properties of the comparison point of 
$M$ and $N$ is expressed in the following inclusion:
$$
(M \cup \lim(M)) \cap (N \cup \lim(N)) \subseteq \beta_{M,N}.
$$
This is proved in \cite[Proposition 2.6]{jk21}.

\begin{definition}
A finite set $A \subseteq \mathcal X$ is \emph{adequate} if 
for all $M$ and $N$ in $A$, either 
$M \cap \beta_{M,N} \in Sk(N)$, $N \cap \beta_{M,N} \in Sk(M)$, or 
$M \cap \beta_{M,N} = N \cap \beta_{M,N}$.
\end{definition} 

Note that $A$ is adequate if for all $M$ and $N$ in $A$, 
$\{ M, N \}$ is adequate. 
Since $\omega_1 \le \beta_{M,N}$, if $\{ M, N \}$ is adequate then 
$M \cap \beta_{M,N} \in Sk(N)$ iff $M \cap \omega_1 \in N$, and 
$M \cap \beta_{M,N} = N \cap \beta_{M,N}$ iff 
$M \cap \omega_1 = N \cap \omega_1$.

\begin{proposition}
If $A$ is adequate, $M \in \mathcal X$, and $A \in Sk(M)$, then 
$A \cup \{ M \}$ is adequate.
\end{proposition}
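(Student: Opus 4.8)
The plan is to reduce the statement to the corresponding fact from \cite{jk21}, but since that is not possible verbatim (the ambient cardinal is now $\kappa$ rather than $\omega_2$), I would instead redo the short argument directly. The goal is: given an adequate $A$, a model $M \in \mathcal X$, and the hypothesis $A \in Sk(M)$, show that $A \cup \{M\}$ is adequate. By the remark following Definition 6.1, it suffices to show that for each $P \in A$, the pair $\{P, M\}$ is adequate, i.e.\ that one of the three alternatives holds for $P$ and $M$ at the comparison point $\beta_{P,M}$.

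First I would fix $P \in A$ and analyze $\beta_{P,M}$. Since $A \in Sk(M)$ and $P \in A$ with $A$ finite, we get $P \in Sk(M)$, hence $P$ is coded by an element of $Sk(M) \cap \kappa = M$; in particular $\sup(P \cap \kappa)$, and the relevant closure points of $C$ below it, all lie in $M$. This should give $\Lambda_P \in Sk(M)$ and therefore $\beta_{P,M} \in M$ (it is definable from $\Lambda_P$ and $\Lambda_M$, and $\Lambda_P \cap \Lambda_M$ has a largest element by Lemma~2.4 of \cite{jk21}). Once $\beta_{P,M} \in M$, elementarity of $Sk(M)$ gives $P \cap \beta_{P,M} = (P \cap \beta_{P,M}) \in Sk(M)$: the set $P \cap \beta_{P,M}$ is definable from the two parameters $P$ and $\beta_{P,M}$, both in $Sk(M)$, so it lies in $Sk(M)$. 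That is exactly the first of the three alternatives in Definition~6.1 for the pair $\{P, M\}$, so $\{P, M\}$ is adequate.

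Since $P \in A$ was arbitrary, and $A$ itself was already adequate, every pair of models from $A \cup \{M\}$ is adequate, so $A \cup \{M\}$ is adequate.

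The main obstacle I expect is the bookkeeping needed to verify $\beta_{P,M} \in M$, i.e.\ that the comparison point is computed inside $Sk(M)$. This requires knowing that $\Lambda_P$ is definable from $P$ over $(H(\kappa),\in,\pi)$ — which follows because $\Lambda$ itself is definable from $C$ and $C$ is definable from $\pi$ — together with the fact that $M$ is closed under the relevant Skolem functions and satisfies the closure conditions built into the definition of $\mathcal X$ (namely $\sup(C \cap \gamma) \in M$ for $\gamma \in M$). All of this is exactly the content of the analogue in \cite[Section~2]{jk21}, with $\omega_2$ replaced by $\kappa$, so the substance of the verification is already in place; I would simply cite the parallel lemmas there for the detailed claims about $\beta_{P,M}$ and $\Lambda_P$ and then assemble them as above.
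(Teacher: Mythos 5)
The paper gives no argument for this proposition---it simply cites \cite[Proposition 3.5]{jk21}---so there is no in-paper proof to compare against. Your strategy of reducing to pairs $\{P,M\}$ with $P \in A$, and then verifying the first alternative of Definition~6.1 (namely $P \cap \beta_{P,M} \in Sk(M)$), is the right one and matches the cited source.

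There is, however, a genuine gap at the pivotal step. You argue that $\beta_{P,M} \in M$ \emph{because} it is definable from $\Lambda_P$ and $\Lambda_M$, having first observed $\Lambda_P \in Sk(M)$. But $\Lambda_M$ is not an element of $Sk(M)$ (neither is $M$ itself, since $Sk(M) \cap \kappa = M$ is a countable set), so definability from the pair $(\Lambda_P, \Lambda_M)$ does not place $\beta_{P,M}$ inside $Sk(M)$; the word ``therefore'' in your sentence is unsupported. The argument that actually works is to show that when $P \in Sk(M)$ the comparison point is $\beta_{P,M} = \max(\Lambda_P) = \min(\Lambda \setminus \sup P)$, an ordinal that \emph{is} in $Sk(M) \cap \kappa = M$ because $\Lambda_P \in Sk(M)$. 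To see that $\max(\Lambda_P)$ belongs to $\Lambda_M$ (hence is the maximum of $\Lambda_P \cap \Lambda_M$), one uses that $P \subseteq M$ (a countable set in $Sk(M)$ is a subset of it) to get $\sup P \le \sup(M \cap \max(\Lambda_P)) \le \max(\Lambda_P)$, and then that $\Lambda$ meets $[\sup P, \max(\Lambda_P))$ in the empty set, so $\min(\Lambda \setminus \sup(M \cap \max(\Lambda_P))) = \max(\Lambda_P)$. You correctly flag $\beta_{P,M} \in M$ as the main bookkeeping obstacle and defer to \cite{jk21}, which is legitimate, but the specific reasoning you wrote for that step would not survive being made precise. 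The rest of your argument---once $\beta_{P,M} \in M$ is secured, elementarity gives $P \cap \beta_{P,M} \in Sk(M)$, and in fact $P \cap \beta_{P,M} = P$ since $\beta_{P,M} > \sup P$; the reduction to pairs is justified by the remark after Definition~6.1---is correct.
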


See \cite[Proposition 3.5]{jk21}.

If $N \in \mathcal X$ and $\beta \in \Lambda$, then 
$N \cap \beta$ is in $\mathcal X$ (\cite[Lemma 1.10]{jk21}). 
Let us say that an adequate set $A$ is \emph{$N$-closed} 
if for all $M \in A$, $M \cap \beta_{M,N} \in A$.

\begin{proposition}
Let $A$ be adequate and $N \in A$. 
Then there exists an adequate set $B$ such that 
$A \subseteq B$ and $B$ is $N$-closed.
\end{proposition}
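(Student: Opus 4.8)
The plan is to let $B := A \cup \{\, M \cap \beta_{M,N} : M \in A \,\}$ and check that this set works. That each $M \cap \beta_{M,N}$ lies in $\mathcal{X}$ is immediate: $\beta_{M,N} \in \Lambda$, so this is the cited fact (\cite[Lemma 1.10]{jk21}) that $N' \cap \beta \in \mathcal{X}$ whenever $N' \in \mathcal{X}$ and $\beta \in \Lambda$. Clearly $A \subseteq B$ and $N \in B$.

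Next I would verify that $B$ is $N$-closed. The crucial point is the identity $\beta_{(M \cap \beta_{M,N}),\, N} = \beta_{M,N}$ for $M \in \mathcal{X}$. This follows by computing $\Lambda_{M \cap \beta}$ for $\beta := \beta_{M,N}$: since $\beta \in \Lambda_M$, a direct unwinding of the definitions gives $\Lambda_{M \cap \beta} = \{\, \delta \in \Lambda_M : \delta \le \beta \,\}$, and intersecting with $\Lambda_N$ and taking the largest element returns $\beta$. Hence for $M \in A$ we have $(M \cap \beta_{M,N}) \cap \beta_{(M \cap \beta_{M,N}),\,N} = (M \cap \beta_{M,N}) \cap \beta_{M,N} = M \cap \beta_{M,N} \in B$; and of course $M \cap \beta_{M,N} \in B$ already for $M \in A$, while $N \cap \beta_{N,N} = N$ (since $\beta_{N,N}$ is the largest element of $\Lambda_N$, which is $\min(\Lambda \setminus \sup N) > \sup N$). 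So $B$ is $N$-closed.

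The substantive step is proving that $B$ is adequate, i.e.\ that $\{C,D\}$ is adequate for all $C, D \in B$. When $C, D \in A$ this is the hypothesis, so it remains to handle pairs in which at least one of $C, D$ equals $M \cap \beta_{M,N}$ for some $M \in A$. For this I would appeal to the lemmas of \cite[Sections 2--3]{jk21} asserting that restricting a model of an adequate set to a suitable point of $\Lambda$ --- in particular, restricting $M_1$ to the comparison point $\beta_{M_1,N} \in \Lambda_{M_1}$ --- preserves adequacy of a pair $\{M_1, M_2\}$; applying this once handles the case $D = M_2 \in A$, and applying it a second time (now restricting $M_2$ to $\beta_{M_2,N}$) handles the case $D = M_2 \cap \beta_{M_2,N}$. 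I expect the real content of the argument, and its only delicate part, to be hidden in those restriction lemmas from \cite{jk21}: they rest on the compatibility of the definable Skolem hull with the operation $M \mapsto M \cap \beta$ for $\beta \in \Lambda_M$, namely that a set $x \subseteq \beta$ belongs to $Sk(M)$ iff it belongs to $Sk(M \cap \beta)$, which is precisely what lets one transport the alternative ``$M_2 \cap \gamma \in Sk(M_1)$'' in the definition of adequacy down to ``$M_2 \cap \gamma \in Sk(M_1 \cap \beta_{M_1,N})$'' at the new comparison point. Granting these facts, the verification is routine and the proposition follows.
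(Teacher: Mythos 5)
Your construction $B := A \cup \{\, M \cap \beta_{M,N} : M \in A \,\}$ is exactly the one underlying \cite[Proposition 3.4]{jk21}, which is all the paper offers as a proof (with $\kappa$ in place of $\omega_2$, as Section 6 explains). The closure calculation is correct and is the cleanest part of the argument: the identity $\beta_{M \cap \beta_{M,N},\,N} = \beta_{M,N}$ does follow, as you say, from $\Lambda_{M\cap\beta} = \{\delta \in \Lambda_M : \delta \le \beta\}$ for $\beta \in \Lambda_M$, and that idempotence is precisely why a single closure step suffices rather than an iterated construction; the observation that $N \cap \beta_{N,N} = N$ because $\beta_{N,N} = \min(\Lambda \setminus \sup N) > \sup N$ is also right. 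You correctly identify that the substantive content is adequacy of $B$, which rests on the restriction machinery of \cite[Sections 2--3]{jk21}: roughly, that for $\beta \in \Lambda_M$ and countable $x \subseteq \beta$ one has $x \in Sk(M)$ iff $x \in Sk(M\cap\beta)$, together with the recomputation of comparison points for the shrunken models. You are candid that you are deferring this to the cited source rather than re-deriving it. Since the paper itself defers the entire proof to that same source, your proposal is correct and matches the paper's approach; you have simply unpacked the citation one level further than the authors did.
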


This follows from \cite[Proposition 3.4]{jk21}. 
The next result appears as \cite[Proposition 3.9]{jk21}.

\begin{proposition}
Let $A$ be adequate, $N \in A$, and suppose that $A$ is $N$-closed. 
Let $B$ be adequate such that $A \cap Sk(N) \subseteq B \subseteq Sk(N)$. 
Then $A \cup B$ is adequate.
\end{proposition}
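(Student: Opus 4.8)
The statement to prove is Proposition 6.4 (as numbered in this excerpt): given an adequate set $A$ with $N \in A$ such that $A$ is $N$-closed, and an adequate set $B$ with $A \cap Sk(N) \subseteq B \subseteq Sk(N)$, we want to conclude that $A \cup B$ is adequate. The plan is to reduce to checking adequacy of pairs $\{M, M'\}$ with $M \in A$ and $M' \in B$, since $A$ and $B$ are each already adequate and adequacy is a pairwise property (as noted after Definition 6.1). So fix $M \in A \setminus Sk(N)$ (the case $M \in Sk(N)$ is handled because then $M \in A \cap Sk(N) \subseteq B$, and $B$ is adequate) and $M' \in B \subseteq Sk(N)$; we must show $\{M, M'\}$ is adequate, i.e., that one of the three clauses involving $\beta_{M,M'}$ holds.

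The key idea is to route the comparison of $M$ and $M'$ through $N$, using $N$-closedness of $A$ and the fact that $M' \in Sk(N)$. First I would analyze the comparison point $\beta_{M,M'}$ in relation to $\beta_{M,N}$. Since $A$ is adequate and $N$-closed, we have $M \cap \beta_{M,N} \in A$, and moreover $\{M, N\}$ is adequate so one of the trichotomy clauses holds between $M$ and $N$. I expect the crucial sublemma to be that $\beta_{M,M'} = \beta_{M \cap \beta_{M,N}, \, M'}$ or that $M \cap \beta_{M,M'} = (M \cap \beta_{M,N}) \cap \beta_{M,M'}$ — in other words, that for the purpose of comparing $M$ with the small model $M' \subseteq Sk(N)$, only the part of $M$ below $\beta_{M,N}$ matters. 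This should follow from the key inclusion $(M \cup \lim(M)) \cap (N \cup \lim(N)) \subseteq \beta_{M,N}$ quoted before Definition 6.1, together with $M' \subseteq Sk(N) \cap \kappa$ and basic properties of comparison points from \cite[Section 2]{jk21}: since $M'$ is "captured" by $N$, the interaction of $M'$ with $M$ cannot see anything in $M$ above $\beta_{M,N}$.

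Granting that reduction, set $M_0 := M \cap \beta_{M,N} \in A$. Now both $M_0$ and $M'$ lie in $Sk(N)$ (the former because $\{M,N\}$ adequate forces $M_0 \in Sk(N)$ or $M_0 = N \cap \beta_{M,N}$, and in the latter subcase we can replace $M_0$ by $N \cap \beta_{M,N} \in Sk(N)$), so I can compare $M_0$ and $M'$ entirely inside $Sk(N)$. The point is that $A \cap Sk(N)$ and $B$ are both subsets of $Sk(N)$, $A \cap Sk(N) \subseteq B$, and $B$ is adequate; so if $M_0 \in A \cap Sk(N)$ then $M_0, M' \in B$ and adequacy of $\{M_0, M'\}$ is immediate from adequacy of $B$. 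The remaining case is when $M_0 = N \cap \beta_{M,N}$; then I would use Proposition 6.3 (with $N$ in the role of the model and $M'$ the new model, noting $M' \in B \subseteq Sk(N)$ hence $B \in Sk(N)$ is not quite what I need — rather I need that $\{N \cap \beta_{M,N}, M'\}$ is adequate, which follows since $N \cap \beta_{M,N} \in Sk(N)$, $M' \in Sk(N)$, and the pair relation between a model and one of its own initial segments captured in a common $Sk$-hull is controlled by standard adequacy facts from \cite{jk21}). Unwinding the reduction then gives that $\{M, M'\}$ is adequate.

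The main obstacle I anticipate is the reduction sublemma — precisely verifying that $M \cap \beta_{M,M'} = M_0 \cap \beta_{M,M'}$ and that the relevant trichotomy clause for $\{M_0, M'\}$ transfers back to $\{M, M'\}$. This requires careful bookkeeping with comparison points: one must show $\beta_{M, M'} \le \beta_{M,N}$ (so that truncating $M$ at $\beta_{M,N}$ loses nothing below $\beta_{M,M'}$), and that the membership/equality statements are preserved under this truncation, e.g. $M \cap \beta_{M,M'} \in Sk(M')$ iff $M_0 \cap \beta_{M,M'} \in Sk(M')$. All of these are the kind of elementarity-and-Skolem-hull manipulations carried out in Sections 2–3 of \cite{jk21}, so I would cite the relevant lemmas there rather than reprove them; the expected citations are \cite[Lemma 2.4, Proposition 2.6]{jk21} for comparison-point basics and \cite[Propositions 3.4, 3.5, 3.9]{jk21} for the adequacy-closure machinery, mirroring exactly the structure of the proof of \cite[Proposition 3.9]{jk21} itself.
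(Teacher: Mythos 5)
The paper does not actually prove this proposition; it simply states that the result ``appears as \cite[Proposition 3.9]{jk21}'' and moves on. So there is no in-text proof to compare your argument against. What I can do is assess your sketch on its own terms, and here the honest verdict is that it is a plan, not a proof: you have correctly identified the right high-level shape (reduce to pairwise checks $\{M,M'\}$ with $M\in A$, $M'\in B$; in the nontrivial case $M\notin Sk(N)$, use $N$-closedness to truncate $M$ to $M_0=M\cap\beta_{M,N}\in A$; push the comparison inside $Sk(N)$), but you stop short precisely at the technical core.

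Two concrete gaps. First, your ``reduction sublemma'' --- that $\beta_{M,M'}\le\beta_{M,N}$ and hence $M\cap\beta_{M,M'}=M_0\cap\beta_{M,M'}$, with the trichotomy clause for $\{M_0,M'\}$ transferring back to $\{M,M'\}$ --- is exactly the content that needs verifying and is exactly what you defer to \cite{jk21}; as written it is an expectation, not an argument. It is also not obviously a one-liner: $M'\in Sk(N)$ does not by itself give $M'\subseteq N$, so the inclusion $(M\cup\lim(M))\cap(N\cup\lim(N))\subseteq\beta_{M,N}$ does not immediately control $\beta_{M,M'}$ the way your sketch suggests. Second, your case analysis of the trichotomy between $M$ and $N$ is incomplete. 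You split into ``$M_0\in A\cap Sk(N)$'' and ``$M_0=N\cap\beta_{M,N}$,'' but adequacy of $\{M,N\}$ gives one of \emph{three} possibilities, and the third --- $N\cap\beta_{M,N}\in Sk(M)$ with $M_0\notin Sk(N)$ --- is never addressed, even though it is the case in which $M$ is ``above'' $N$ and the routing-through-$N$ idea is least obvious. (You also cite ``Proposition~6.3'' for the remaining case, but that proposition asserts the existence of an $N$-closed enlargement, which is not the tool needed there; you seem to be reaching for Proposition~6.2, and even that does not directly apply.) In short, your reduction strategy is plausible and in the right spirit, but the actual proof would be carried by the comparison-point lemmas of \cite[Sections 2--3]{jk21}, which you cite without reconstructing; since the paper itself offers only the citation, your proposal does not conflict with it, but it also does not constitute an independent verification.
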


Now we are ready to define the forcing poset $\p$. 
Let $\p$ consist of conditions $A$ such that $A$ is a finite adequate 
subset of $\mathcal X$. 
Let $B \le A$ in $\p$ if $A \subseteq B$.

\begin{lemma}
The forcing poset $\p$ has greatest lower bounds. 
Namely, for compatible conditions $A$ and $B$ in $\p$, 
$A \wedge B$ equals $A \cup B$.
\end{lemma}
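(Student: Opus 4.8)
The plan is to reduce everything to the observation that the partial order on $\p$ is reverse inclusion, so that a common extension of $A$ and $B$ is the same thing as a finite adequate subset of $\mathcal X$ containing $A \cup B$, and then to note that adequacy passes to subsets.

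First I would record that, since $B \le A$ in $\p$ means $A \subseteq B$, a condition $C$ extends both $A$ and $B$ exactly when $A \cup B \subseteq C$. Hence if $A$ and $B$ are compatible in $\p$, there is a finite adequate set $C \subseteq \mathcal X$ with $A \cup B \subseteq C$. By Definition 6.1, adequacy of a finite subset of $\mathcal X$ is a property of its pairs: $C$ is adequate iff $\{M,N\}$ is adequate for all $M, N \in C$. Consequently every subset of an adequate set is adequate, so in particular $A \cup B$ is adequate; being a finite subset of $\mathcal X$, it is a condition in $\p$.

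Next I would verify that $A \cup B$ is a greatest lower bound of $A$ and $B$. It is a lower bound because $A \subseteq A \cup B$ and $B \subseteq A \cup B$, i.e. $A \cup B \le A$ and $A \cup B \le B$. And if $C$ is any common lower bound, so that $C \le A$ and $C \le B$, then $A \subseteq C$ and $B \subseteq C$, hence $A \cup B \subseteq C$, which says $C \le A \cup B$. Thus $A \wedge B$ exists and equals $A \cup B$.

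I do not anticipate any genuine obstacle; the only point that must be invoked rather than proved is that adequacy is a pairwise condition (Definition 6.1 and the remark following it), which is exactly what lets us conclude that $A \cup B$ is a bona fide condition as soon as $A$ and $B$ are compatible. Everything else is immediate from the order being reverse inclusion and from $A \cup B$ being finite.
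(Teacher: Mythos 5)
Your proof is correct and follows essentially the same route as the paper: both arguments use the fact that adequacy is a pairwise property to conclude that $A \cup B$, being a subset of a common extension $C$, is itself adequate, and then observe that the order is reverse inclusion to identify $A \cup B$ as the greatest lower bound.
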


\begin{proof}
Suppose that $A$ and $B$ are compatible, and let $C \le A, B$. 
Since the ordering of $\p$ is inclusion, $A \cup B \subseteq C$. 
For all $M$ and $N$ in $A \cup B$, $\{ M, N \}$ is adequate since they are in $C$. 
So $A \cup B$ is adequate. 
Hence $A \cup B \in \p$ and $C \le A \cup B$.
\end{proof}

\begin{lemma}
For all $A$, $B$, and $C$ in $\p$ which are pairwise compatible, 
$A$ is compatible with $B \cup C$.
\end{lemma}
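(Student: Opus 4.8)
Proof proposal for Lemma 6.8 (the final statement: for all $A$, $B$, $C$ in $\p$ pairwise compatible, $A$ is compatible with $B \cup C$).

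The plan is to show that $A \cup B \cup C$ is an adequate set, which by Lemma 6.6 is exactly what is needed: since $B$ and $C$ are compatible, $B \wedge C = B \cup C$, and the claim that $A$ is compatible with $B \cup C$ amounts to $A \cup (B \cup C) = A \cup B \cup C$ being an adequate condition. By the remark following Definition 6.1, $A \cup B \cup C$ is adequate if and only if $\{M, N\}$ is adequate for every pair $M, N \in A \cup B \cup C$. So first I would reduce to checking an arbitrary pair $M, N$ from the union. If both $M$ and $N$ lie in one of the three sets $A$, $B$, or $C$, there is nothing to prove, since each of $A$, $B$, $C$ is itself adequate. Hence the only pairs requiring attention are the "mixed" pairs, where $M$ and $N$ come from two different sets among $A$, $B$, $C$.

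The key observation is that each such mixed pair $\{M, N\}$ is contained in $X \cup Y$ for two of the sets $X, Y \in \{A, B, C\}$, and by hypothesis $X$ and $Y$ are compatible. By Lemma 6.6, $X \wedge Y = X \cup Y$ is a condition in $\p$, hence is an adequate set, hence every pair drawn from $X \cup Y$ — in particular $\{M, N\}$ — is adequate. This handles every mixed pair, so every pair from $A \cup B \cup C$ is adequate, so $A \cup B \cup C$ is adequate, so $A \cup B \cup C \le A$ and $A \cup B \cup C \le B \cup C$, witnessing compatibility of $A$ with $B \cup C$.

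I do not anticipate a serious obstacle here: the argument is purely a matter of noting that adequacy is a property of pairs, that pairwise compatibility gives us that the union of any two of the three conditions is again a condition, and then assembling these facts. The only point requiring a small amount of care is making sure the reduction to pairs is used in the right direction — i.e., invoking the remark after Definition 6.1 that "$A$ is adequate if for all $M$ and $N$ in $A$, $\{M, N\}$ is adequate" — and that we correctly identify $B \cup C$ as the greatest lower bound $B \wedge C$ via Lemma 6.6 so that the statement really is a statement about the single set $A \cup B \cup C$ being a condition. In short, this lemma is the verification that $\p$ satisfies the property $*(\p, \p)$ from Section 2, and the proof is the straightforward pairwise check just described.
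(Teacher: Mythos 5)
Your proof is correct and takes essentially the same approach as the paper: reduce adequacy of $A \cup B \cup C$ to a pairwise check, and observe that any pair from the union lies in one of the three pairwise unions $A \cup B$, $A \cup C$, $B \cup C$, each of which is adequate by pairwise compatibility and Lemma~6.5. The paper phrases this a bit more compactly (it does not separate out the ``same set'' case, since those pairs are also covered by one of the three pairwise unions), but the argument is the same.
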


\begin{proof}
Let $M$ and $N$ be in $A \cup (B \cup C)$. 
Then $M$ and $N$ are either both in  
$A \cup B$, $A \cup C$, or $B \cup C$. 
Since each of these three sets is adequate, $\{ M, N \}$ is adequate.
\end{proof}

\begin{proposition}
The forcing poset $\p$ has universal strongly generic conditions on a club.
\end{proposition}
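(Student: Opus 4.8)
The plan is to show that for club many $M \in \mathcal{X}$ (equivalently, for club many countable $N \prec H(\chi)$ for large $\chi$, after intersecting with $H(\kappa)$), the singleton condition $\{M\}$ is a universal strongly $(M,\p)$-generic condition. Since $\p$ has greatest lower bounds by Lemma 6.6, it suffices by Lemma 2.2 to produce a restriction function, and by Definition 2.3 to check universality, i.e.\ that $\{M\}$ is compatible with every $A \in M \cap \p$. Universality is immediate from Proposition 6.2: if $A$ is adequate and $A \in Sk(M)$, then $A \cup \{M\}$ is adequate, hence a common extension of $A$ and $\{M\}$ in $\p$. (Here one uses that conditions $A$ in $M \cap \p$ satisfy $A \in Sk(M)$, since $A$ is a finite subset of $\mathcal{X} \cap M$ and $M$ is closed under Skolem functions.)

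For strong genericity, the key step is to define, for each $B \le \{M\}$ in $\p$ (so $M \in B$ and $B$ is a finite adequate set), a restriction $B \restriction M \in M \cap \p$ such that every extension of $B \restriction M$ inside $M \cap \p$ is compatible with $B$. The natural candidate is $B \restriction M := \{\, N \cap \beta_{N,M} : N \in B \,\}$ (or $B \cap Sk(M)$ together with these traces). By Proposition 6.3, one can first enlarge $B$ to an $M$-closed adequate set $B'$, so that the traces $N \cap \beta_{N,M}$ already lie in $B'$; passing to $B'$ is harmless since it only shrinks the set of extensions we must handle, and it places us exactly in the hypothesis of Proposition 6.4. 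Then $B \restriction M$ is adequate, is a subset of $Sk(M)$, hence lies in $M \cap \p$ by elementarity of $Sk(M)$.

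Now given $D \le B \restriction M$ in $M \cap \p$, we must show $B$ and $D$ are compatible, i.e.\ that $B \cup D$ is adequate. Since $D \in Sk(M)$ and $B \restriction M \subseteq D$, we have $B' \cap Sk(M) \subseteq B \restriction M \subseteq D \subseteq Sk(M)$, so Proposition 6.4 applied with $A = B'$, $N = M$, and the adequate set $D$ in the role of $B$ yields that $B' \cup D$ is adequate, hence so is $B \cup D$. This gives the witnessing restriction function, so $\{M\}$ is strongly $(M,\p)$-generic, and combined with universality, it is a universal strongly $(M,\p)$-generic condition.

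The main obstacle is verifying that the chosen $B \restriction M$ genuinely lands in $M \cap \p$ and that the hypotheses of Propositions 6.3 and 6.4 line up on the nose — in particular checking $B' \cap Sk(M) \subseteq B\restriction M$, which relies on the fundamental inclusion $(N \cup \lim(N)) \cap (M \cup \lim(M)) \subseteq \beta_{N,M}$ quoted before Definition 6.1 to see that any element of $B'$ lying in $Sk(M)$ must actually be a trace $N \cap \beta_{N,M}$. One must also confirm that the set of $M$ for which all this goes through is club, which follows from the standard fact that club many countable $N \prec H(\chi)$ have $N \cap H(\kappa) \in \mathcal{X}$, together with the fact (from \cite{jk21}) that for such $M$ every finite adequate $A \in M$ satisfies $A \subseteq \mathcal{X} \cap M$ and $A \in Sk(M)$. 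Once these bookkeeping points are settled, the argument is a direct application of the cited propositions from \cite{jk21}.
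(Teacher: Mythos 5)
Your overall strategy is the same as the paper's: for $M \in \mathcal{X}$ (equivalently $N \cap \kappa \in \mathcal{X}$), take $\{M\}$ as the universal condition, get universality from Proposition 6.2, pass to the dense set of $M$-closed extensions via Proposition 6.3, and invoke Proposition 6.4 to verify the restriction map works. However, your definition of the restriction function is wrong, and the error is exactly at the point you flag as the ``main obstacle.''

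You set $B \restriction M := \{\, N \cap \beta_{N,M} : N \in B \,\}$ and assert this is a subset of $Sk(M)$. It is not, in general. If $N \in B$ has $N \cap \omega_1 > M \cap \omega_1$, then adequacy of $\{M,N\}$ gives $M \cap \beta_{M,N} \in Sk(N)$, and indeed $N \cap \beta_{M,N} \notin Sk(M)$: otherwise $N \cap \omega_1 = (N \cap \beta_{M,N}) \cap \omega_1$ would lie in $Sk(M) \cap \omega_1 = M \cap \omega_1$, contradicting $N \cap \omega_1 > M \cap \omega_1$. So your $B \restriction M$ typically contains models not in $Sk(M)$ and therefore fails to be an element of $M \cap \p$. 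Your alternative phrasing ``$B \cap Sk(M)$ together with these traces'' has the same defect, since it still includes all the traces. You correctly check the inclusion $B' \cap Sk(M) \subseteq B \restriction M$ (needed for Proposition 6.4), but you never actually verify the other requirement $B \restriction M \subseteq Sk(M)$, and that is where the argument breaks. The paper's choice, after extending to an $M$-closed $B'$, is simply $B' \restriction M := B' \cap Sk(M)$ --- the models of $B'$ that already lie in $Sk(M)$ --- which is trivially a subset of $Sk(M)$, lies in $M \cap \p$ by elementarity, and satisfies the hypothesis of Proposition 6.4 on the nose. With that single correction your argument goes through and matches the paper's.
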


\begin{proof}
Note that $\mathcal X$ is a club subset of $P_{\omega_1}(\kappa)$. 
Clearly $\lambda_\p = \kappa$. 
Consider any set $N$ in $P_{\omega_1}(H(\kappa))$ such that 
$N \cap \kappa \in \mathcal X$. 
We claim that $\{ N \cap \kappa \}$ is a universal strongly $N$-generic condition.

If $A$ is a finite adequate set in $N = Sk(N \cap \kappa)$, then 
by Proposition 6.2, $A \cup \{ N \cap \kappa \}$ is adequate and hence is 
in $\p$. 
Therefore $A$ and $\{ N \cap \kappa \}$ are compatible. 

To show that $\{ N \cap \kappa \}$ is a strongly 
$N$-generic condition, 
let $E$ be the set of $B$ in 
$\p$ such that $\{ N \cap \kappa \} \subseteq B$ 
and $B$ is $N$-closed. 
By Proposition 6.3, $E$ is dense below $\{ N \cap \kappa \}$. 
Define $C \mapsto C \restriction N$ for $C \in E$ by letting 
$C \restriction N := C \cap N$. 
Clearly $C \cap N$ is adequate and hence is in $\p$, 
$C \cap N \in N$, and $C \le C \cap N$. 
If $B$ is a condition in $N$ extending $C \restriction N$, then 
Proposition 6.4 implies that $B \cup C$ is a condition in $\p$ 
below $B$ and $C$, showing that $C$ and $B$ are compatible.
\end{proof}

\begin{proposition}
The forcing poset $\q$ is $\kappa$-c.c.
\end{proposition}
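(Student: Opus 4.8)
The plan is to follow the proof in \cite{jk21} that the analogous pure side condition forcing on $\omega_2$ has the $\omega_2$-chain condition, replacing $\omega_2$ by $\kappa$ throughout; the role played there by CH is played here by the fact that $\kappa$ is a strong limit, and we also use that $\kappa$ is regular and that $\Lambda$ is stationary. Suppose toward a contradiction that $\{ A_\xi : \xi < \kappa \}$ is an antichain in $\p$. By thinning we may assume that all $A_\xi$ have the same cardinality $k$, and we fix for each $\xi$ an enumeration $A_\xi = \{ N^\xi_0, \ldots, N^\xi_{k-1} \}$ listed so that $N^\xi_j \cap \omega_1$ is nondecreasing in $j$, with ties broken in some canonical way.

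The first step is a sequence of pigeonhole reductions, all using that $\kappa$ is regular. The tuple $\langle N^\xi_j \cap \omega_1 : j < k \rangle$ takes fewer than $\kappa$ values, and the $\in$-diagram of $(N^\xi_0, \ldots, N^\xi_{k-1})$, including which of the comparison-point relations from Definition 6.1 hold among its members, is finite data, so we may thin to a subfamily of size $\kappa$ on which all of this is constant. Since $\kappa$ is regular and $\mu^\omega < \kappa$ for every $\mu < \kappa$, the $\Delta$-system lemma applied to the countable sets $\bigcup A_\xi$ gives a further subfamily of size $\kappa$ together with a countable root $r \subseteq \kappa$ such that $(\bigcup A_\xi) \cap (\bigcup A_\eta) = r$ for distinct $\xi, \eta$, and we may also arrange that the $A_\xi$ are pairwise disjoint as sets of models and that $r$ sits uniformly relative to each condition. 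Finally, since $2^\omega < \kappa$ there are fewer than $\kappa$ isomorphism types of the structure $(Sk(\bigcup A_\xi); \in, \pi, r, N^\xi_0, \ldots, N^\xi_{k-1})$, so we may thin once more to a subfamily on which any two members are isomorphic via a map fixing $Sk(r)$ pointwise and sending $N^\xi_j$ to $N^\eta_j$; as in \cite{jk21} it is convenient after these reductions to reflect the whole configuration into an elementary submodel $M^* \prec H(\chi)$ with $M^* \cap \kappa \in \Lambda$, which is possible since $\Lambda$ is stationary.

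The main step, and the expected obstacle, is the amalgamation: for suitably chosen distinct $\xi$ and $\eta$ in the final subfamily, $A_\xi \cup A_\eta$ is adequate, and hence by Lemma 6.5 is a common lower bound of $A_\xi$ and $A_\eta$, contradicting that $\{ A_\xi : \xi < \kappa \}$ is an antichain. To see that $A_\xi \cup A_\eta$ is adequate one must check, for every $N^\xi_i$ and $N^\eta_j$, that one of the three clauses of Definition 6.1 holds for the pair at their comparison point $\beta := \beta_{N^\xi_i, N^\eta_j}$. Here the bookkeeping from the first step pays off: the $\Delta$-system forces $N^\xi_i$ and $N^\eta_j$ to agree exactly on the root, the inclusion $(N^\xi_i \cup \lim(N^\xi_i)) \cap (N^\eta_j \cup \lim(N^\eta_j)) \subseteq \beta$ from \cite[Proposition 2.6]{jk21} confines the relevant comparison to lie below $\beta$, and the fixed isomorphism transports the initial segment of $N^\xi_i$ up to $\beta$ onto the corresponding initial segment of $N^\eta_j$; combining these yields one of the three alternatives. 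The delicate point, handled exactly as in the $\omega_2$-chain condition argument of \cite{jk21}, is to arrange the thinnings so that the matched models agree on their common part precisely up to $\beta$, so that this transport is available; the passage from $\omega_2$ to a strongly inaccessible $\kappa$ changes nothing essential in that combinatorics.
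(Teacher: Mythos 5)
The paper's own ``proof'' of this proposition is a single sentence: it states that the argument is identical to that of \cite[Proposition 4.4]{jk21}, with $\kappa$ in place of $\omega_2$, and omits it. So there is no actual argument in the paper to compare against. Your sketch correctly identifies how the hypotheses translate (strong inaccessibility giving $\mu^\omega < \kappa$ in place of CH, plus regularity of $\kappa$ and stationarity of $\Lambda$), and the $\Delta$-system together with isomorphism-type counting and reflection into a model $M^*$ with $M^* \cap \kappa \in \Lambda$ is indeed the standard framework for chain conditions of this kind of side-condition forcing, matching what the citation points to.

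That said, as a self-contained argument your sketch has a gap at precisely the step that does the real work, namely showing $A_\xi \cup A_\eta$ is adequate. Your assertion that ``the $\Delta$-system forces $N^\xi_i$ and $N^\eta_j$ to agree exactly on the root'' is not accurate as stated: the $\Delta$-system only gives $N^\xi_i \cap N^\eta_j \subseteq r$, not equality with $r$. More importantly, you never actually derive one of the three adequacy alternatives of Definition 6.1 at the comparison point $\beta = \beta_{N^\xi_i, N^\eta_j}$; you observe that \cite[Proposition 2.6]{jk21} confines the intersection below $\beta$ and that the isomorphism transports initial segments, and then simply assert that ``combining these yields one of the three alternatives.'' The obstruction you name as ``the delicate point'' is genuine: $\beta$ is determined a posteriori by the pair, while the root and isomorphism were fixed in advance, and making them cooperate (so that, say, $N^\xi_i \cap \beta = N^\eta_i \cap \beta$ for matched indices, and so that the unmatched pairs fall into the Skolem-membership alternatives using the adequacy already present within $A_\xi$) is exactly the content of the argument you defer to \cite{jk21}. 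Since the paper itself defers the entire proof to the same source, your level of detail is comparable to the paper's, but the amalgamation step is not actually carried out in your sketch.
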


The proof of this proposition is identical to the proof of 
\cite[Proposition 4.4]{jk21}, so we omit it.

\begin{lemma}
The forcing poset $\p$ forces that $\kappa$ equals $\omega_2$.
\end{lemma}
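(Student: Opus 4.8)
The plan is to show that $\p$ collapses every cardinal strictly between $\omega_1$ and $\kappa$, while preserving $\omega_1$ and $\kappa$, so that $\kappa$ becomes $\omega_2$ in the extension. Preservation of $\omega_1$ is immediate from Proposition 6.5: $\p$ has universal strongly generic conditions on a club, hence is strongly proper, hence proper, hence preserves $\omega_1$. Preservation of $\kappa$ follows from Proposition 6.6: $\p$ is $\kappa$-c.c., and since $\kappa$ is inaccessible (in particular regular), a $\kappa$-c.c. forcing of size $\kappa$ preserves $\kappa$. So the whole content is collapsing the cardinals in the interval $(\omega_1,\kappa)$, and since $|\p| = \kappa$ it suffices to collapse each such cardinal to $\omega_1$.

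The key step is to fix an uncountable cardinal $\mu < \kappa$ and produce, in $V^\p$, a surjection from $\omega_1$ onto $\mu$. The natural approach is to consider the generic object $\dot A_{\mathrm{gen}} = \bigcup \dot G_\p$, a set of models in $\mathcal X$, and for each $\alpha < \kappa$ look at the models in the generic set that contain $\alpha$. Concretely, first I would argue by a density argument that for every $\alpha < \kappa$ and every condition $A$, there is $M \in \mathcal X$ with $\alpha \in M$ and $A \in Sk(M)$, so that $A \cup \{M\}$ is adequate by Proposition 6.2; hence $\p$ forces that every ordinal below $\kappa$ belongs to some model in the generic set. Next, fixing $\mu$, I would fix a model $M^* \in \mathcal X$ with $\mu \in M^*$; then for a $V$-generic $G$ with $\{M^*\}$-closed conditions cofinal (available by Proposition 6.3), one examines the models $M$ appearing in $\bigcup G$ with $M \in Sk(M^*)$ and $M \cap \mu$ cofinal information, and uses that the possible values $M \cap \mu$ for $M \in \mathcal X \cap Sk(M^*)$ of countable size, together with the genericity, sweep out a cofinal subset of $\mu$. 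Combined with the fact that $Sk(M^*)$ has size $|M^*|^{\omega}$ which can be arranged $< \kappa$, and that each $M \cap \mu$ is countable, one builds in $V[G]$ an $\omega_1$-indexed sequence whose union is cofinal in $\mu$, and then uses the $\omega_1$-c.c.-free structure of countable approximations to get an actual surjection of $\omega_1$ onto $\mu$.

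An alternative — and I suspect cleaner — route is to observe that the generic set $\bigcup \dot G_\p$, restricted to models below a fixed $\beta \in \Lambda$ with $\omega_1 \le \beta < \kappa$, gives a generic filter for the forcing $\p \restriction \beta$ of adequate subsets of $\mathcal X \cap \beta$ (this is the analogue of Krueger's factorization in \cite{jk21}), and this restricted forcing is, up to the analysis in Sections 1--4 of \cite{jk21}, a nontrivial countably-closed-quotient-free side condition forcing on a set of size $|\beta|^{\omega}$ that collapses $|\beta|^{\omega}$ to $\omega_1$; iterating through all $\beta \in \Lambda$ below $\kappa$ collapses everything in $(\omega_1, \kappa)$. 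This mirrors the proof of \cite[Lemma 4.5 / the collapsing lemma]{jk21} verbatim with $\kappa$ in place of $\omega_2$, using that $\kappa$ is inaccessible so that $|\beta|^{\omega} < \kappa$ for $\beta < \kappa$ by the definition of $\Lambda$ and $\mathcal X$.

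The main obstacle will be verifying that $\p$ actually \emph{does} collapse cardinals below $\kappa$ and does not accidentally preserve them — i.e., producing the surjection $\omega_1 \to \mu$ from the generic side conditions, rather than merely knowing $\p$ is proper and $\kappa$-c.c. (which by itself is consistent with $\p$ being, say, trivial). This is exactly the point where one must invoke the genericity of the side-condition sequence and the counting $|Sk(M^*) \cap \mu| \le \mu^{\omega}$ together with stationarity of $\Lambda$; all of this is worked out in \cite{jk21} for $\omega_2$, and the expected resolution is simply to cite \cite[Section 4]{jk21} and note that the argument is insensitive to replacing $\omega_2$ by the inaccessible $\kappa$.
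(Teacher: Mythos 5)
Your framework is right — preserve $\omega_1$ by strong properness, preserve $\kappa$ by $\kappa$-c.c., and collapse every $\mu \in (\omega_1,\kappa)$ — and your density argument for getting models containing a given ordinal into the generic set is correct and matches the paper. But the actual mechanism by which a fixed uncountable $\mu < \kappa$ gets collapsed to $\omega_1$ is missing, and the sketch you give in its place would not work as stated.

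The paper's argument is to restrict attention to the set $F$ of models $M$ appearing in the generic set with $\mu \in M$ (not merely $\mu \in Sk(M)$ or $M \in Sk(M^*)$). A density argument shows $\bigcup\{M\cap\mu : M\in F\}=\mu$. The crucial point is then a \emph{comparability} argument: if $M,N\in F$ then $\mu\in M\cap N$ forces $\mu<\beta_{M,N}$, and adequacy of a common condition containing $M$ and $N$ then implies (via $M\cap\beta_{M,N}\in Sk(N)$ or vice versa, using $Sk(N)\cap\kappa=N$) that $M\cap\mu$ and $N\cap\mu$ are equal or one is a subset of the other. So $\{M\cap\mu : M\in F\}$ is a chain under inclusion of countable sets, hence has order type at most $\omega_1$, and $\mu$ is a union of $\le\omega_1$ countable sets. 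Your proposal never makes this comparability/chain observation; instead you introduce a fixed ground-model $M^*$ and talk about counting $|Sk(M^*)\cap\mu|$ and about an ``$\omega_1$-indexed sequence whose union is cofinal in $\mu$.'' Cofinality in $\mu$ does not give $|\mu|=\omega_1$ (cofinality $\omega_1$ is compatible with $\mu$ being large), and nothing in your sketch explains why the relevant family of traces $M\cap\mu$ should be indexable by $\omega_1$. The phrase ``$\omega_1$-c.c.-free structure of countable approximations'' does not correspond to an actual argument. Fixing $M^*$ and working with $\{M^*\}$-closed conditions is also an unnecessary detour: $M^*$ need not belong to the generic, and the paper's proof does not use $N$-closure here at all. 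The alternative route of citing \cite{jk21} and asserting the argument transfers would be acceptable if you identified the comparability step as the content being transferred, but as written it amounts to deferring the gap rather than filling it.
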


\begin{proof}
Since $\p$ preserves $\omega_1$ and $\kappa$ by the preceding propositions, 
it suffices to show that for any regular 
cardinal $\mu$ such that $\omega_1 < \mu < \kappa$, 
$\p$ collapses $\mu$ to have size $\omega_1$. 
So let such a $\mu$ be given and let $G$ be a $V$-generic filter on $\p$.

Given any condition $A$ and any ordinal $\gamma < \kappa$, we can 
fix $M$ in $\mathcal X$ such that $A$, $\gamma$, and $\mu$ are in $Sk(M)$. 
Then $A \cup \{ M \}$ is a condition in $\p$ by Proposition 6.2. 
It follows by a density argument that the set 
$$
F = \{ M : \exists A \in G \ ( M \in A \ \land \ \mu \in M) \}
$$
has union equal to $\kappa$. 
In particular, $\bigcup \{ M \cap \mu : M \in F \} = \mu$.

Now for any $M$ and $N$ in $F$, $\mu \in M \cap N$ implies that 
$\mu < \beta_{M,N}$. 
Fix $A$ in $G$ such that $M$ and $N$ are in $A$. 
Since $A$ is adequate and $\mu < \beta_{M,N}$, 
$M \cap \mu$ and $N \cap \mu$ are either equal or one is a 
proper subset of the other. 
Moreover, since $M \cap \beta_{M,N} \subseteq N$ iff $M \cap \omega_1 \le 
N \cap \omega_1$, 
it follows that the set $\{ M \cap \mu : M \in F \}$ is a chain well-ordered 
by inclusion. 
As each set in this chain is countable, 
it must have length at most $\omega_1$. 
So $\mu$ is the union of $\omega_1$ many countable 
sets, which implies that $\mu$ is collapsed to have size $\omega_1$ in $V[G]$.
\end{proof}

This completes our treatment of a strongly proper collapse. 
We note that some variations are possible. 
By \cite{jk24}, it is possible to eliminate $N$-closure which is used in 
Propositions 6.4 and 6.7, so that the family of models $\mathcal X$ can 
be chosen not to be closed under intersections. 
An alternative development of these ideas can be made in the 
context of coherent adequate set forcing as described in 
\cite{jk23} and \cite{jk25}, to produce a strongly proper collapse 
which preserves CH.

We also point out that if $\kappa$ is a Mahlo cardinal, then the forcing poset 
$\p$ above forces that there are no special Aronszajn trees on $\omega_2$, 
and if $\kappa$ is weakly compact then $\p$ forces that there are no 
Aronszajn trees on $\omega_2$. 
These facts follow by arguments similar to 
Mitchell's original proof in \cite{mitchelltree} 
using Corollary 2.14. 
Neeman obtained similar results in \cite[Section 5]{neeman}.

\section{The consistency result}

We start with a model in which GCH holds, $\kappa$ is a 
supercompact cardinal, and $\lambda$ is a cardinal 
of uncountable cofinality with $\kappa \le \lambda$. 
We will define a forcing poset $\p \times \q$ which collapses 
$\kappa$ to become $\omega_2$, forces $2^\omega = \lambda$, and 
forces that $\mathsf{ISP}(\omega_2)$ holds.

Let $\p$ be the forcing poset described in the preceding section consisting of 
finite adequate collections of countable subsets of $\kappa$, ordered by inclusion. 
Let $\q$ be $\add(\omega,\lambda)$. 
Conditions in $\q$ are finite partial functions 
from $\omega \times \lambda$ into $2$, ordered by inclusion.

The forcing poset $\q$ is $\kappa$-Knaster. 
In other words, if $\{ q_i : i < \kappa \}$ is a subset of $\q$, 
then there is a cofinal set $X \subseteq \kappa$ such that 
for all $i < j$ in $X$, $q_i$ and $q_j$ are compatible. 
This follows from the $\Delta$-system lemma by a standard argument. 
Since $\p$ is $\kappa$-c.c., it follows easily that 
$\p \times \q$ is $\kappa$-c.c. 
Also $\p \times \q$ has size $\lambda$. 
It follows by a standard argument using nice names that 
$\p \times \q$ forces that $2^\omega = \lambda$.

\begin{lemma}
If $p$ and $q$ are in $\q$, then $p \cup q$ is the greatest lower 
bound of $p$ and $q$. 
If $p$, $q$, and $r$ 
are pairwise compatible conditions in $\q$, then 
$p$ is compatible with $q \wedge r$.
\end{lemma}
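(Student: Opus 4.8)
The plan is to unwind the definition of $\q = \add(\omega,\lambda)$: its conditions are finite partial functions from $\omega \times \lambda$ into $2$, ordered by reverse inclusion, so that $s \le t$ means $t \subseteq s$. The key elementary fact I would use throughout is that two conditions $p, q \in \q$ are compatible in $\q$ if and only if $p \cup q$ is a function, i.e.\ $p$ and $q$ agree on $\dom(p) \cap \dom(q)$.

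For the first assertion, let $p$ and $q$ be compatible in $\q$. Then $p \cup q$ is a finite partial function from $\omega \times \lambda$ into $2$: it is finite since $p$ and $q$ are, and it is a function by compatibility. Hence $p \cup q \in \q$, and since the ordering is reverse inclusion, $p \cup q \le p$ and $p \cup q \le q$. If $s \in \q$ satisfies $s \le p$ and $s \le q$, then $p \subseteq s$ and $q \subseteq s$, so $p \cup q \subseteq s$, i.e.\ $s \le p \cup q$. Therefore $p \cup q$ is the greatest lower bound of $p$ and $q$.

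For the second assertion, suppose $p$, $q$, and $r$ are pairwise compatible. By the first part, $q \wedge r = q \cup r$, and it is a function since $q$ and $r$ are compatible. To see that $p$ is compatible with $q \cup r$, it suffices to check that $p$ and $q \cup r$ agree on $\dom(p) \cap \dom(q \cup r)$. Let $(n,\alpha)$ be in this intersection. Then $(n,\alpha) \in \dom(q)$ or $(n,\alpha) \in \dom(r)$. In the first case, compatibility of $p$ and $q$ gives $p(n,\alpha) = q(n,\alpha) = (q \cup r)(n,\alpha)$; in the second case, compatibility of $p$ and $r$ gives $p(n,\alpha) = r(n,\alpha) = (q \cup r)(n,\alpha)$. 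In either case the values agree, so $p \cup q \cup r$ is a function, hence a condition in $\q$ extending both $p$ and $q \cup r$, witnessing that $p$ is compatible with $q \wedge r$.

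I do not expect any genuine obstacle: the statement is essentially a restatement of the definition of $\add(\omega,\lambda)$, recorded here so that Lemmas 3.1 and the main quotient theorems apply to $\q$. The only point needing the slightest care is, in the second part, noting that the hypothesis that $q$ and $r$ are compatible is exactly what makes $q \cup r$ a function, so that writing $(q \cup r)(n,\alpha)$ is legitimate.
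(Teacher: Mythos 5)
Your proof is correct and is exactly the standard unwinding of the definition of $\add(\omega,\lambda)$; the paper itself only remarks that ``the proof is easy'' and gives no argument, so there is nothing to contrast it with. The one small point worth noting is that the lemma's first sentence should be read as implicitly assuming $p$ and $q$ compatible (otherwise $p \cup q$ is not a condition), which you correctly supply.
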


The proof is easy.

\begin{lemma}
The forcing poset $\q$ has universal strongly 
generic conditions on a club.
\end{lemma}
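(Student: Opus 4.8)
The plan is to exhibit, for every $N$ in a club, one explicit universal strongly $(N,\q)$-generic condition---the empty condition $\emptyset$ will do. First I would record that $\lambda_\q = \lambda$, since every finite partial function from $\omega \times \lambda$ into $2$ already lies in $H(\lambda)$; then I would let $D$ be the club of those $N \in P_{\omega_1}(H(\lambda))$ with $N \prec (H(\lambda),\in)$, noting that each such $N$ contains $\omega$ and is closed under finite subsets (every finite subset of $N$ is a member of $N$). It then suffices to produce a universal strongly $(N,\q)$-generic condition for each $N \in D$.

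Fixing $N \in D$, I would define, for every $r \in \q$, $r \restriction N := r \restriction (\omega \times (N \cap \lambda))$. Because $r$ is a finite partial function and $N$ is closed under finite subsets with $\omega \subseteq N$, this $r \restriction N$ is again a finite partial function from $\omega \times \lambda$ into $2$, all of whose ordered pairs have both coordinates in $N$; hence $r \restriction N \in N \cap \q$, and $r \restriction N \subseteq r$, that is, $r \le r \restriction N$. The key step is to check that any $v \in N \cap \q$ with $v \le r \restriction N$ is compatible with $r$: since $v \in N$, its domain is a finite subset of $\omega \times (N \cap \lambda)$, so $\dom(r) \cap \dom(v) \subseteq \dom(r) \cap (\omega \times (N \cap \lambda)) = \dom(r \restriction N)$, and on that common part $r$ agrees with $r \restriction N$ by the definition of restriction while $r \restriction N$ agrees with $v$ because $r \restriction N \subseteq v$; so $r \cup v$ is a function and hence a common extension. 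Then Lemma 2.2(2), applied with $q = \emptyset$ so that the witnessing map $r \mapsto r \restriction N$ is defined on all of $\q = \{ r : r \le \emptyset \}$, gives that $\emptyset$ is strongly $(N,\q)$-generic; and $\emptyset$ is universal because $p \le \emptyset$, hence $p$ is compatible with $\emptyset$, for every $p \in N \cap \q$. That finishes the proof.

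I do not expect a genuine obstacle here; this is the standard computation for Cohen-style posets. The two points that need a moment's care are that $r \restriction N$ really does land in $N$---which is exactly why the club $D$ is restricted to models closed under finite subsets---and the bookkeeping in the agreement-on-the-common-domain argument, which only uses that all the pairs in $v$ lie in $\omega \times (N \cap \lambda)$.
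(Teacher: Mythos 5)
Your proof is correct and is essentially the same as the paper's: both exhibit $\emptyset$ as a universal strongly $N$-generic condition on the club of countable elementary submodels of $H(\lambda)$, and both define $r \restriction N$ by restricting $r$ to $N$ (the paper writes $r\cap N$, you write $r\restriction(\omega\times(N\cap\lambda))$, which agree since $\omega\subseteq N$ and $0,1\in N$) and verify compatibility by agreement on the common domain. The invocation of Lemma~2.2(2) with $q=\emptyset$ matches the paper's implicit use of the same criterion.
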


\begin{proof}
Let $N$ be a countable elementary substructure of $H(\lambda)$. 
We claim that the empty condition is a 
universal strongly $N$-generic condition. 
Clearly it is compatible with every condition in $N \cap \q$.

For each $r \in \q$, let $r \restriction N := r \cap N$. 
Then $r \restriction N \in N \cap \q$ and $r \le r \restriction N$. 
Suppose that $v \le r \restriction N$ is in $N \cap \q$. 
We claim that $r$ and $v$ are compatible. 
If $(i,n) \in \dom(r) \cap \dom(v)$, then $(i,n) \in \dom(r) \cap N$. 
Also $r(i,n) \in N$, so $(i,n,r(i,n)) \in r \cap N$. 
Since $v$ extends $r \cap N$, $r(i,n) = v(i,n)$. 
It follows that $r \cup v$ is a condition below $r$ and $v$.
\end{proof}

\begin{proposition}
The forcing poset $\p \times \q$ satisfies:
\begin{enumerate}
\item $\p \times \q$ has greatest lower bounds;
\item for all pairwise compatible conditions $x$, $y$, and $z$ 
in $\p \times \q$, $x$ is compatible with $y \wedge z$;
\item $\p \times \q$ has universal strongly generic conditions 
on a club;
\item $\p \times \q$ is strongly proper.
\end{enumerate}
\end{proposition}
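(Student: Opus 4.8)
The plan is to verify the four claims of Proposition 7.3 by combining the results already established for $\p$ and $\q$ individually with the product lemmas from Section 3. Each part is essentially a bookkeeping exercise: $\p$ was shown in Section 6 to have greatest lower bounds (Lemma 6.5), to satisfy the pairwise-compatibility property $*(\p,\p)$ (Lemma 6.6), and to have universal strongly generic conditions on a club (Proposition 6.8); and $\q = \add(\omega,\lambda)$ was shown to have the corresponding properties in Lemmas 7.1 and 7.2 of the current section. So the work is to transfer each of these through the product.

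First I would handle (1): by the observation at the start of Section 3, if $\p$ and $\q$ both have greatest lower bounds then so does $\p \times \q$, with $(p_0,q_0) \wedge (p_1,q_1) = (p_0 \wedge p_1, q_0 \wedge q_1)$. This applies since $\p$ has greatest lower bounds by Lemma 6.5 and $\q$ does by Lemma 7.1. Next, for (2), I would apply Lemma 3.1, which says exactly that the property ``$x$ is compatible with $y \wedge z$ whenever $x,y,z$ are pairwise compatible'' passes to products; both factors have this property, $\p$ by Lemma 6.6 and $\q$ by Lemma 7.1. For (3), I would invoke Lemma 3.2 with $\lambda$ taken to be $\max(\lambda_\p,\lambda_\q)$ (here $\lambda_\p = \kappa \le \lambda = \lambda_\q$, so $\lambda$ works) and $S$ the relevant club: by Proposition 6.8 and Lemma 7.2, both $\p$ and $\q$ have universal strongly generic conditions on clubs, so after intersecting those clubs we obtain a single club $S$ on which both do, and Lemma 3.2 then gives universal strongly generic conditions for $\p \times \q$ on $S$.

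Finally, for (4), I would note that $\p \times \q$ having universal strongly generic conditions on a club (part (3)) immediately implies it is strongly proper, since a universal strongly $N$-generic condition is in particular a strongly $N$-generic condition, and for any $p \in N \cap (\p \times \q)$ a universal condition extends $p \wedge (\text{the universal condition})$ — more directly, one uses that universal conditions lie below witnesses to genericity; alternatively this is the remark following Lemma 3.2 that strong properness is productive, combined with the fact that $\p$ and $\q$ are each strongly proper. I expect there to be essentially no obstacle here: every ingredient has been assembled in Sections 3, 6, and 7, and the proof is a citation of Lemmas 3.1, 3.2, 6.5, 6.6, 7.1, 7.2 together with Proposition 6.8. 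If anything requires a word of care it is only checking the cardinal bookkeeping in part (3) — that $\lambda_{\p \times \q} \le \lambda$ so that working in $P_{\omega_1}(H(\lambda))$ is legitimate — and the trivial observation that the intersection of two clubs in $P_{\omega_1}(H(\lambda))$ is again a club.

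\begin{proof}
Parts (1) and (2) follow from the remarks at the beginning of Section 3 and from Lemma 3.1, respectively, using that $\p$ has greatest lower bounds and satisfies the pairwise compatibility property by Lemmas 6.5 and 6.6, and that $\q$ does by Lemma 7.1.

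For (3), note that $\lambda_\p = \kappa$ and $\lambda_\q = \lambda$, so $\lambda_{\p \times \q} \le \lambda$. By Proposition 6.8 there is a club $C_\p \subseteq P_{\omega_1}(H(\lambda))$ of $N$ such that $\p$ has a universal strongly $N$-generic condition, and by Lemma 7.2 there is a club $C_\q \subseteq P_{\omega_1}(H(\lambda))$ with the analogous property for $\q$. Let $S := C_\p \cap C_\q$, which is a club in $P_{\omega_1}(H(\lambda))$. By Lemma 3.2, $\p \times \q$ has universal strongly generic conditions on $S$, so it has universal strongly generic conditions on a club.

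Finally, (4) follows from (3): if $N$ is in the club $S$ from (3) and $p \in N \cap (\p \times \q)$, then the universal strongly $N$-generic condition $q_N$ is compatible with $p$ by universality, and $q_N \wedge p \le p$ is a strongly $N$-generic condition by Lemma 1.4 and the fact that extensions of strongly generic conditions are strongly generic. Hence $\p \times \q$ is strongly proper.
\end{proof}
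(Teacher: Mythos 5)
Your proof is correct in substance and follows the paper's approach: (1)--(3) are obtained by combining the product lemmas of Section~3 with the corresponding facts about $\p$ and $\q$ proved in Sections~6 and~7, and (4) is deduced from (3). Two minor citation slips are worth correcting. The result giving $\p$ universal strongly generic conditions on a club is Proposition~6.7, not Proposition~6.8 (the latter is the $\kappa$-c.c.\ statement). And in part (4), Lemma~1.4 is irrelevant --- it concerns quotients by regular suborders, not strongly generic conditions; the fact you actually need, that any extension of a strongly $N$-generic condition is again strongly $N$-generic, is immediate from Definition~2.1 (a set predense below $q$ is predense below any $r \le q$) and needs no lemma. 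Apart from these, your explicit argument for (4) --- that $q_N \wedge p$ is a strongly $N$-generic extension of any $p \in N \cap (\p \times \q)$ --- is a reasonable addition, since the paper's own proof silently omits item (4).
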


\begin{proof}
(1) and (2) follow from Lemmas 3.1, 6.5, 6.6, and 7.1. 
(3) follows from Lemma 3.2, Proposition 6.7, and Lemma 7.2.
\end{proof}

Since $\p \times \q$ is strongly proper it preserves $\omega_1$, and 
since it is $\kappa$-c.c., it preserves all cardinals greater than or 
equal to $\kappa$. 
By Lemma 6.9, every cardinal $\mu$ with $\omega_1 < \mu < \kappa$ 
is collapsed to have size $\omega_1$. 
It follows that $\p \times \q$ forces that $\kappa$ is equal to $\omega_2$. 
As noted above, $\p \times \q$ forces that $2^\omega = \lambda$. 

\bigskip

It remains to show that $\p \times \q$ forces that 
$\mathsf{ISP}(\omega_2)$ holds. 
So fix a regular cardinal $\chi$ such that $\p \times \q \in H(\chi)$. 
Let $\dot F$ be a $(\p \times \q)$-name for a function 
$\dot F : (H(\chi)^{V[\dot G_{\p \times \q}]})^{<\omega} \to 
H(\chi)^{V[\dot G_{\p \times \q}]}$. 
We will prove that $\p \times \q$ forces that there exists a set $N$ satisfying:
\begin{enumerate}
\item $N$ is in $P_{\kappa}(H(\chi))$;
\item $N \cap \kappa \in \kappa$;
\item $N \prec H(\chi)$;
\item $N$ is closed under $\dot F$;
\item $N$ is an $\omega_1$-guessing model.
\end{enumerate}

Since $\kappa$ is supercompact, we can fix an elementary embedding 
$j : V \to M$ with critical point $\kappa$ such that 
$j(\kappa) > |H(\chi)|$ and $M^{|H(\chi)|} \subseteq M$.

\begin{lemma}
In $V$, the function $j \restriction \p \times \q$ is a regular embedding of 
$\p \times \q$ into $j(\p \times \q)$. 
In particular, 
the forcing poset $j[\p \times \q]$ is a regular suborder of $j(\p \times \q)$.
\end{lemma}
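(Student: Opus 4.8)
The plan is to verify the three clauses of Definition 4.1 for the map $f := j \restriction (\p \times \q)$, regarded as a map from $\p \times \q$ into $j(\p \times \q)$; the final sentence of the lemma then follows from the remark, recorded after Definition 4.1, that the image of a regular embedding is a regular suborder of its target.

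Clauses (1) and (2) are immediate from the elementarity of $j$. For (1): if $(B,q) \le (A,p)$ in $\p \times \q$, then since the ordering of $\p \times \q$ is definable from the parameter $\p \times \q$, applying $j$ gives $j((B,q)) = (j(B),j(q)) \le_{j(\p \times \q)} (j(A),j(p)) = j((A,p))$. For (2): the statement that two conditions have no common lower bound in a given poset is expressible with the poset and the two conditions as parameters, so if $(A,p)$ and $(A',p')$ are incompatible in $\p \times \q$, then $j((A,p))$ and $j((A',p'))$ are incompatible in $j(\p \times \q)$; contrapositively, compatibility of the images reflects to compatibility of the originals. That $f$ is injective, as required of an embedding, is automatic since $\crit(j) = \kappa$.

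Clause (3) is the substantive point. Let $\mathcal{A}$ be a maximal antichain of $\p \times \q$. Since $\p \times \q$ is $\kappa$-c.c.\ --- this uses that $\p$ is $\kappa$-c.c.\ together with the fact that $\q = \add(\omega,\lambda)$ is $\kappa$-Knaster --- we have $|\mathcal{A}| < \kappa = \crit(j)$. Consequently $j(\mathcal{A}) = j[\mathcal{A}] = \{\, j(a) : a \in \mathcal{A} \,\}$: enumerating $\mathcal{A}$ in order type some $\delta < \kappa$, every index below $\delta$ is fixed by $j$, so $j$ applied to the enumeration is the pointwise image. By elementarity, $M$ models that $j(\mathcal{A})$ is a maximal antichain of $j(\p \times \q)$. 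Now whether a set is a maximal antichain of a poset refers only to the poset's underlying set and its ordering relation, and here that poset is the fixed object $j(\p \times \q)$, which has the same elements and the same ordering whether computed in the transitive inner model $M$ or in $V$; hence ``$j(\mathcal{A})$ is a maximal antichain of $j(\p \times \q)$'' is absolute between $M$ and $V$, and therefore holds in $V$. Since $j(\mathcal{A}) = j[\mathcal{A}] = f[\mathcal{A}]$, this is exactly clause (3).

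The only real obstacle lies in clause (3), and it is mild: one must observe that a maximal antichain of $\p \times \q$ is small relative to $\crit(j)$ --- which is precisely why $j$ neither enlarges $\mathcal{A}$ nor introduces new conditions witnessing non-maximality --- and one must take a moment to see that maximality of an antichain is genuinely absolute in this situation because the target poset and its ordering form a single fixed object. Clauses (1) and (2), and the ``in particular'' conclusion, are routine.
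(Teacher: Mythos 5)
Your proof is correct and follows essentially the same route as the paper: clauses (1) and (2) of Definition 4.1 by elementarity of $j$, and clause (3) by combining the $\kappa$-c.c.\ of $\p \times \q$ (so $|\mathcal{A}| < \kappa = \crit(j)$ and hence $j(\mathcal{A}) = j[\mathcal{A}]$) with the absoluteness of ``maximal antichain of the fixed poset $j(\p \times \q)$'' between the transitive model $M$ and $V$. Your unpacking of the absoluteness step is slightly more explicit than the paper's terse ``upwards absolute,'' and the parenthetical remark that injectivity of $f$ follows ``since $\crit(j) = \kappa$'' is a small misattribution (injectivity of $j$ comes from elementarity, not the critical point, and in any case Definition 4.1 does not demand it), but neither point affects the argument.
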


\begin{proof}
Properties (1) and (2) of Definition 4.1 follow immediately from the fact 
that $j$ is an elementary embedding. 
For (3), let $A$ be a maximal antichain of $\p \times \q$. 
Since $\p \times \q$ is $\kappa$-c.c., $|A| < \kappa$. 
Therefore $j(A) = j[A]$. 
By elementarity, in $M$ the set $j(A)$ is a maximal antichain of $j(\p \times \q)$. 
But being a maximal antichain is upwards absolute, so $j(A) = j[A]$ is 
a maximal antichain of $j(\p \times \q)$.
\end{proof}

Since being a regular suborder is downwards absolute, $j[\p \times \q]$ is a 
regular suborder of $j(\p \times \q)$ in the model $M$.

Let $G \times H$ be a $V$-generic filter on $\p \times \q$. 
Let $F := \dot F^{G \times H}$. 
We will prove that in $V[G \times H]$, there exists a set $N$ satisfying 
properties (1)--(5) above. 

Since $j \restriction \p \times \q$ is an isomorphism of $\p \times \q$ 
onto $j[\p \times \q]$, $j[G \times H]$ is a $V$-generic filter on 
$j[\p \times \q]$ and $V[G \times H] = V[j[G \times H]]$. 
Let $K$ be a $V[G \times H]$-generic filter on the quotient forcing 
$j(\p \times \q) / j[G \times H]$. 
By Lemma 1.6(2), $K$ is a $V$-generic filter on $j(\p \times \q)$ and 
$V[G \times H][K] = V[K]$.
Also by Lemma 1.6(2), $j[G \times H] = K \cap j[\p \times \q]$. 
In particular, $j[G \times H] \subseteq K$. 
Hence we can extend the elementary embedding $j$ in 
$V[K]$ to $j : V[G \times H] \to M[K]$ 
by letting $j(\dot a^{G \times H}) = j(\dot a)^K$.

By the elementarity of $j$, it suffices to prove that $M[K]$ models that there 
exists a set $N$ satisfying:
\begin{enumerate}
\item $N$ is in $P_{j(\kappa)}(H(j(\chi)))$;
\item $N \cap j(\kappa) \in j(\kappa)$;
\item $N \prec H(j(\chi))$;
\item $N$ is closed under $j(F)$;
\item $N$ is an $\omega_1$-guessing model.
\end{enumerate}

\bigskip

Let $N := j[H(\chi)^{V[G \times H]}]$.

\bigskip

First, we prove that $N$ is in $M[K]$. 
Since $\p \times \q \in H(\chi)^V$, 
$H(\chi)^{V[G \times H]} = H(\chi)^V[G \times H]$. 
As $M^{|H(\theta)^V|} \subseteq M$, 
$H(\theta)^V$ and $j \restriction H(\theta)^V$ are in $M$. 
By the definition of the extended embedding $j$, 
$N = \{ j({\dot a}^{G \times H}) : \dot a \in H(\chi)^V \} = 
\{ j(\dot a)^K : \dot a \in H(\chi)^V \}$. 
Since $H(\chi)^V$, $j \restriction H(\chi)^V$, and $K$ are in $M[K]$, 
so is $N$.

\bigskip

Now let us check that $N$ satisfies properties (1)--(5).

\bigskip

(1) Obviously $N \subseteq H(j(\chi))$ in $M[K]$. 
Let $f : |H(\theta)^V| \to H(\theta)^V$ be a bijection in $V$. 
The surjection $g : |H(\theta)^V| \to N$ given by 
$g(\alpha) = j(f(\alpha))^K$ is in $M[K]$. 
Since $j(\kappa) > |H(\theta)^V|$, 
in $M[K]$ we have that $N \in P_{j(\kappa)}(H(j(\chi)))$. 

\bigskip

(2) As $\kappa$ is the critical point of $j$, $N \cap j(\kappa)$ equals $\kappa$, 
which is in $j(\kappa)$.

\bigskip

(3) Let $L : (H(\chi)^{V[G \times H]})^{<\omega} \to H(\chi)^{V[G \times H]}$ 
be a Skolem function for the structure $(H(\chi),\in)$. 
By elementarity, $j(L)$ is a Skolem function in $M[K]$ for the structure 
$(H(j(\chi)),\in)$. 
Easily $N$ is closed under $j(L)$. 
So $N \prec H(j(\chi))$ in $M[K]$.

\bigskip

(4) Since $H(\chi)^{V[G \times H]}$ is obviously closed under $F$, 
easily $N$ is closed under $j(F)$.

\bigskip

(5) To show that $N$ is an $\omega_1$-guessing model in $M[K]$, 
by Lemma 1.10 it suffices to show that the pair 
$(\overline N,M[K])$ satisfies the $\omega_1$-approximation property, 
where $\overline N$ is the transitive collapse of $N$. 
Since $N = j[H(\chi)^{V[G \times H]}]$ is isomorphic to 
$H(\chi)^{V[G \times H]}$, which is transitive, 
it follows that $\overline N = H(\chi)^{V[G \times H]}$. 
As $H(\chi)^V = H(\chi)^M$, 
$H(\chi)^{V[G \times H]} = H(\chi)^{M[G \times H]}$. 
So it suffices to show that the pair 
$(H(\chi)^{M[G \times H]},M[K])$ satisfies the 
$\omega_1$-approximation property.

By Proposition 7.3, the forcing poset $\p \times \q$ has greatest lower bounds and 
has universal strongly generic conditions on a stationary set. 
And for all pairwise compatible conditions $x$, $y$, and $z$ in 
$\p \times \q$, $x$ is compatible with $y \wedge z$. 
By elementarity, the same properties are satisfied by $j(\p \times \q)$ 
in the model $M$. 
In particular, by Lemma 7.4 
$j[\p \times \q]$ is a regular suborder of $j(\p \times \q)$ 
satisfying property $*(j[\p \times \q],j(\p \times \q))$. 
By Corollary 2.14, the forcing poset $j(\p \times \q) / j[G \times H]$ 
satisfies the $\omega_1$-approximation property in the model $M[G \times H]$. 
Hence the pair $(M[G \times H],M[K])$ 
satisfies the $\omega_1$-approximation property. 

In $M[K]$ let $d$ be a bounded subset of $H(\chi)^{M[G \times H]} \cap On$ 
such that for any countable set $b \in H(\chi)^{M[G \times H]}$, 
$b \cap d \in H(\chi)^{M[G \times H]}$. 
Since $H(\chi)^{M[G \times H]}$ contains all of its countable subsets in 
$M[G \times H]$ by the regularity of $\chi$, the $\omega_1$-approximation 
property of $(M[G \times H],M[K])$ implies that $d \in M[G \times H]$. 
But $H(\chi)^{M[G \times H]}$ is closed under bounded sets of ordinals, 
so $d \in H(\chi)^{M[G \times H]}$.

\bibliographystyle{plain}
\bibliography{paper26}

\begin{thebibliography}{10}

\bibitem{AS}
U.~Abraham and S.~Shelah.
\newblock Forcing closed unbounded sets.
\newblock {\em J. Symbolic Logic}, 48(3):643--657, 1983.

\bibitem{friedman2}
S.D. Friedman.
\newblock {BPFA} and inner models.
\newblock {\em Annals of the Japan Association for Philosophy of Science},
  19:29--36, 2011.

\bibitem{jk21}
J.~Krueger.
\newblock Forcing with adequate sets of models as side conditions.
\newblock Submitted.

\bibitem{jk23}
J.~Krueger.
\newblock Coherent adequate sets and forcing square.
\newblock {\em Fund. Math.}, 224:279--300, 2014.

\bibitem{jk24}
J.~Krueger.
\newblock Adding a club with finite conditions, part {II}.
\newblock {\em Arch. Math. Logic}, 54(1-2):161--172, 2015.

\bibitem{jk25}
J.~Krueger and M.A. Mota.
\newblock Coherent adequate forcing and preserving {CH}.
\newblock Submitted.

\bibitem{mitchelltree}
W.~Mitchell.
\newblock Aronszajn trees and the independence of the transfer property.
\newblock {\em Ann. Math. Logic}, pages 21--46, 1972.

\bibitem{mitchell1}
W.~Mitchell.
\newblock On the {H}amkins approximation property.
\newblock {\em Ann. Pure Appl. Logic}, 144(1-3):126--129, 2006.

\bibitem{mitchell2}
W.~Mitchell.
\newblock {$I[\omega_2]$} can be the nonstationary ideal on {${\rm
  Cof}(\omega_1)$}.
\newblock {\em Trans. Amer. Math. Soc.}, 361(2):561–601, 2009.

\bibitem{miyamoto}
T.~Miyamoto.
\newblock Forcing a quagmire via matrices of models.
\newblock Preprint.

\bibitem{neeman}
I.~Neeman.
\newblock Forcing with sequences of models of two types.
\newblock {\em Notre Dame J. Form. Log.}, 55(2):265--298, 2014.

\bibitem{shelah}
S.~Shelah.
\newblock {\em Proper and Improper Forcing}.
\newblock Perspectives in Mathematical Logic. Springer-Verlag, Berlin, second
  edition, 1998.

\bibitem{weiss1}
M.~Viale and C.~Weiss.
\newblock On the consistency strength of the proper forcing axiom.
\newblock {\em Adv. Math.}, 228(5):2672--2687, 2011.

\bibitem{weiss2}
C.~Weiss.
\newblock The combinatorial essence of supercompactness.
\newblock {\em Ann. Pure Appl. Logic}, 163(11):1710--1717, 2012.

\end{thebibliography}

\end{document}